\newenvironment{enumerate*}{\vskip.0ex
\begin{inparaenum}[a)\hspace{.2em}]
}{\end{inparaenum}\vspace{1ex}\par\noindent}
\def\vitem{\par\vskip1ex\noindent\hskip1em\item}
\def\hitem{\hskip1em\item}
\definecolor{slateblue}{rgb}{0.28,0.24,0.55}
\definecolor{forestgreen}{rgb}{0.13, 0.55, 0.13}
\definecolor{darkbrown}{rgb}{0.65, 0.16, 0.16}
\date{1st December 2022} 	
\date{28th December 2022}	
\date{20th March 2023}		
\date{9th September 2023}	
\date{3th February 2024}	
\date{5th January 2025}		
\date{\today}
\def\claimname{Claim}
\def\preclaimword{}
\def\@postclaim[#1]{\bf #1}
\def\@claim[#1]{\preclaimword {\bf #1.} \@ifnextchar[{\@postclaim}{}}
{\par\vskip1.5ex\noindent\@ifnextchar[{\@claim}{\bf\claimname~}}%
{\unskip\nobreak\hfill\par\vskip1.5ex}
\def\fracinline#1/#2{\mbox{\raise0.4ex\hbox{\footnotesize$#1$}{\hskip-.1em\mbox{\raise0.125ex\hbox{\small$/$}}\hskip-.1em}\raise-0.25ex\hbox{\footnotesize$#2$}}}
\def\fracinlines#1/#2{\mbox{\raise0.125ex\hbox{\tiny$#1$}{\hskip-.075em\mbox{\raise0.125ex\hbox{\tiny$/$}}\hskip-.1em}\raise-0.15ex\hbox{\tiny$#2$}}}
\def\proofname{Proof}
\def\preproofword{\it Proof of }
\def\postproofword{\it :~}
\def\@pf[#1]{\preproofword {\it #1} \postproofword~}
\newenvironment{Proof}%
{\par\noindent\@ifnextchar[{\par\vskip 6pt\noindent\@pf}{\it\proofname:~ }}%
{{\unskip\nobreak~\hfill\it\qedsymbol}\par\vskip1.2ex}
\theoremstyle{plain}
  \newtheorem{thm}{Theorem}[section]
  \newtheorem{lem}[thm]{Lemma}
  \newtheorem{prop}[thm]{Proposition}
  \newtheorem{cor}[thm]{Corollary}
  \newtheorem{conj}[thm]{Conjecture}
\theoremstyle{definition}
  \newtheorem{defn}[thm]{Defninition}
  \newtheorem{expl}[thm]{Example}
\theoremstyle{remark}
  \newtheorem{rem}[thm]{Remark}
\numberwithin{equation}{section}
\def\homeo{\approx}
\def\cardinal{\mathbb{N}_{0}}
\def\ordinal{\mathbb{N}_{1}}
\def\real{\mathbb{R}}
\def\I{\mathbb{I}}
\def\TD{\mathbb{TD}}
\def\proj{\mathrm{pr}}
\def\emptyarg{}
\def\ad#1{\def\thisarg{#1}\mathrm{ad}\ifx\thisarg\emptyarg\else(#1)\fi}
\def\const#1{\def\thisarg{#1}\operatorname{\iota}\ifx\thisarg\emptyarg\else(\hskip1pt#1)\fi}
\def\open#1{\def\thisarg{#1}\ifx\thisarg\emptyarg{\Site{Open}}\else{\Site{C^{#1}\text-Open}}\fi}
\def\domain#1{\def\thisarg{#1}\ifx\thisarg\emptyarg{\Site{Domain}}\else{\Site{C^{#1}\text-Domain}}\fi}
\def\convex#1{\def\thisarg{#1}\ifx\thisarg\emptyarg{\Site{Convex}}\else{\Site{C^{#1}\text-Convex}}\fi}
\def\locconv#1{\def\thisarg{#1}\ifx\thisarg\emptyarg{\Site{LocConv}}\else{\Site{C^{#1}\text-LocConv}}\fi}
\def\polyhedron#1{\def\thisarg{#1}\ifx\thisarg\emptyarg{\Site{Polyhedron}}\else{\Site{C^{#1}\text-Polyhedron}}\fi}
\def\smoothology#1{\def\thisarg{#1}\ifx\thisarg\emptyarg{\Category{Diffeology}}\else{\Category{C^{#1}\text-Diffeology}}\fi}
\def\der#1by#2{\frac{\operatorname{\mathit{d}}\hspace{-0.1mm}#1}{\operatorname{\mathit{d}}\hspace{-0.1mm}#2}}
\def\nder#1by#2times#3{\frac{\operatorname{\mathit{d}}^{#3}\hspace{-0.2mm}#1}{\operatorname{\mathit{d}}\hspace{-0.4mm}{#2\,}^{#3}}}
\def\pder#1by#2{\frac{\operatorname{\partial}\hspace{-0.1mm}#1}{\operatorname{\partial}\hspace{-0.1mm}#2}}
\def\npder#1by#2times#3{\frac{\operatorname{{\partial\,}^{#3}}\hspace{-0.2mm}#1}{\operatorname{\partial}\hspace{-0.1mm}{#2}^{#3}}}
\def\diff#1{\ifx#1(\operatorname{\mathit{d}}\hspace{0.25mm}#1\else\operatorname{\mathit{d}}\hspace{-0.5mm}#1\fi}
\def\pdiff#1{\ifx#1(\operatorname{\partial}\hspace{0.25mm}#1\else\operatorname{\partial}\hspace{-0.25mm}#1\fi}
\def\Img#1{\operatorname{Im}\hskip1.5pt(\hskip1pt#1)}
\def\midvert{\, \mathstrut \vrule \,}
\def\@supp[#1]{\operatorname{supp}\hskip1pt(\hskip.5pt#1)}
\def\supp{\@ifnextchar[{\@supp}{\operatorname{supp}\hskip.5pt}}
\def\Category#1{{\sf{#1}}}
\def\Site#1{{\sf{#1}}}
\def\Object#1{\operatorname{Obj}\,(\text{\small\(#1\)})}
\def\Morphism#1{\operatorname{Mor}_{\text{\small\,\(#1\)\,}}}
\def\Covering#1{\operatorname{Cov}_{\text{\small\(#1\)\,}}}
\def\manifold{\Category{Manifold}}
\def\diffeology{\Category{Diffeology}}
\def\topology{\Category{Topology}}
\def\gentop{\Category{NumGenTop}}
\def\sets{\Site{Set}}
\def\homeo{\approx}
\def\Map{\operatorname{Map}}
\def\Int{\operatorname{Int}}
\def\Cl{\operatorname{Cl}}
\def\ast{\hbox{\footnotesize$*$}}
\def\wdim{\operatorname{w\text-dim}}
\def\dom#1{\operatorname{dom}\hskip1.5pt(\hskip1pt#1)}
\def\colim{\operatorname{colim}\,}
\newcommand{\comp}{\smash{\lower-.1ex\hbox{\scriptsize$\circ$\,}}}
\def\differentiabletxt{smooth}
\def\smoothtxt{smooth}
\def\hooklongrightarrow{\lhook\joinrel\longrightarrow}
\begin{document}
\ifdefined\expand
\baselineskip21pt
\else
\ifdefined\narrow
\baselineskip15pt
\else
\baselineskip18pt
\fi
\fi
%
%
\title[Fat CW Complex]{A closed manifold is a fat CW complex}
%
%
\author[Iwase]{Norio IWASE}
\email[Iwase]{iwase@math.kyushu-u.ac.jp}
\address[Iwase]{Faculty of Mathematics, Kyushu University, Fukuoka 819-0395, Japan}
\author[Kojima]{Yuki Kojima}
\email[Kojima]{yukikojima8128@gmail.com}
\address[Kojima]
{Toyota Systems Corporation, JP Tower Nagoya 32F, 1-1-1 Meieki, Nakamura-ku, Nagoya-shi 450-6332, Japan}
%
%
\keywords{diffeology, manifold, CW complex, handle, reflexivity, partition of unity}%
%
%
\subjclass[2020]{Primary 58A05, Secondary 57R35, 57R55, 58A40}
%
\begin{abstract}
The main purpose of this paper is to introduce a new smooth version of a CW complex named a fat CW complex, and to show that it includes all closed manifolds, because existing smooth versions of CW complexes (e.g. \cite{MR4414309}) do not have such property. We also verify that de Rham theorem holds for a fat CW complex and that a regular CW complex is reflexive in the sense of Y.~Karshon, J.~Watts and P.~I-Zemmour. Further, any topological CW complex is topologically homotopy equivalent to a fat CW complex. So, a fat CW complex enjoys many nice properties.
\end{abstract}
%
%
\maketitle

\section{Introduction}\label{sect:Introduction}

When we try to import an idea from manifold theory to diffeology, we often encounter a problem caused by the fact that there are several different notions in diffeology corresponding to the idea in manifold theory or usual topology such as differentiable structures on simplices and cubes (see \cite{AX13115668,MR3913971,MR4712607}).
We believe that the categorical techniques help us to sort out such problems.

In this paper, $\diffeology$ stands for the category of diffeological spaces and smooth maps (see \cite{MR3025051} for example), which is cartesian-closed, complete and cocomplete.
We denote by $\manifold$ the category of smooth manifolds with or without boundary, which forms a full subcategory of $\smoothology{}$, where a manifold is assumed to be paracompact.

Our goal in this paper is to present the following picture in \diffeology{}:
\begin{center}
\begin{picture}(376,155)(5,-5)%
\put(7,-5)	{\begin{minipage}[b]{130mm}\begin{itembox}[l]{$\diffeology$}\vphantom{$\begin{array}{l}\Bigg(\\[3ex]\Bigg|\\[3ex]\Bigg)\end{array}$}\end{itembox}\end{minipage}}
\put(33,20)	{\begin{minipage}[b]{68.4mm}\begin{itembox}[l]{\text{\footnotesize Reflexive}}\vphantom{$\begin{array}{l}\Bigg(\\[-2ex]\Big)\end{array}$}\end{itembox}\end{minipage}}
\put(95,05)	{\begin{minipage}[b]{90mm}\begin{itembox}[r]{\text{\footnotesize de Rham}}\vphantom{$\begin{array}{l}\Bigg(\\[6ex]\Bigg)\end{array}$}\end{itembox}\end{minipage}}
\put(106,12)	{\begin{minipage}[b]{82mm}\begin{itembox}[l]{\text{\footnotesize Fat CW}}\vphantom{$\begin{array}{l}\Bigg(\\[-1ex]\Bigg)\end{array}$}\end{itembox}\end{minipage}}
\put(116,27)	{\begin{minipage}[b]{39.4mm}\begin{itembox}[c]{\footnotesize Regular CW}\vphantom{$\begin{array}{l}\Big(\\[-3ex]\Big)\end{array}$}\end{itembox}\end{minipage}}
\put(125,34)	{\begin{minipage}[b]{36mm}\begin{itembox}[l]{\footnotesize Closed Manifold\!}\vphantom{$\begin{array}{l}\Big(\\[-7ex]\Big)\end{array}$}\end{itembox}\end{minipage}}
\put(215.5,27)	{\begin{minipage}[b]{40mm}\begin{itembox}[r]{\text{\footnotesize Thin CW}}\vphantom{$\begin{array}{l}\Big(\\[-3ex]\Big)\end{array}$}\end{itembox}\end{minipage}}
\put(233,50)	{\footnotesize$\I$}
\put(216.5,46.5)	{\footnotesize(\hskip-.1em\ast\hskip-.1em)}
\put(245,46.5)	{\dbox{\footnotesize Topological CW}}
\end{picture}
\end{center}
We also expect that ``regular CW'' part includes all compact manifolds.

In the above picture, ``de Rham'' denotes the class of diffeological spaces where de Rham theorem holds, ``$\I$'' denotes the exotic interval given in \cite{MR4712607}, ``Topological CW'' denotes the class of diffeological spaces with topological homotopy types of topological CW complexes, and $(\ast)$ denotes the class of manifolds of dimension $0$.
For other notions in the picture except for a manifold, we give some explanations later in this paper.

\section{Foundations}

In this section, we present a brief introduction of a diffeological space by Souriau \cite{MR753860}, giving a slightly more categorical formulation than the original one in \cite{MR3025051} to show that the notion is so natural and also simple.
For the meaning of such categorical formulations, please have a look at \cite{nlab-site} and Baez-Hoffnung \cite{MR2817410}.
Later in this section, we also introduce a weaker version of ``dimension'' for our fat CW complexes. 

Let $\domain{}$ be the category of all open sets in $\real^{n}$ for all $n \ge 0$ and {\smoothtxt} functions between them, equipped with the set of all open coverings denoted by $\Covering{\domain{}}(U)$ on each object $U$ in $\domain{}$, i.e, $U$ is an open set in $\real^{n}$ for some $n \ge 0$.

For a set $X$, we have two contravariant functors $\mathcal{M}_{X}, \,\mathcal{K}_{X} : \domain{}^{\text{op}} \to \sets$ given by
\begin{enumerate*}
\vitem
$\mathcal{M}_{X}(U) = \Map(U,X)$ and $\mathcal{M}_{X}(\phi)(P)=P{\comp}\phi$, and
\vitem
$\mathcal{K}_{X}(U) = \{\,Q \in \mathcal{M}_{X}(U) \midvert \text{$Q$ is locally constant}\,\}$ and $\mathcal{K}_{X}(\phi)(Q)=Q{\comp}\phi$
\end{enumerate*}%
for any $U \in \Object{\Site{C}}$, $P \in \mathcal{M}_{X}(U)$, $Q \in \mathcal{K}_{X}(U)$ and $\phi \in \Morphism{\Site{C}}(V,U)$, 
where $Q : U \to X$ is said to be locally constant, if there exists a covering family $\{\,\psi_{\alpha} : V_{\alpha} \to U\,\}_{\alpha \in \Lambda}$ of $U$ such that $Q{\comp}\psi_{\alpha}$ is constant for all $\alpha \in \Lambda$.
Then we have $\mathcal{K}_{X} \subset \mathcal{M}_{X}$ as contravariant functors, i.e, $\mathcal{K}_{X}(U) \subset \mathcal{M}_{X}(U)$ and $\mathcal{K}_{X}(\phi) = \mathcal{M}_{X}(\phi)\vert_{\mathcal{K}_{X}(U)} : \mathcal{K}_{X}(U) \to \mathcal{K}_{X}(V) \subset \mathcal{M}_{X}(V)$.
We often call an element of $\mathcal{M}_{X}(U)$ a \textit{parametrization} of $X$ on $U \in \Object{\domain{}}$.

For a set $X$, we call $(X,\mathcal{D}_{X})$ a \textit{diffeological space}, if it satisfies the following conditions.
\begin{enumerate*}
\vitem[D1 (Smooth Compatibility)]\label{defn:smoothology'-1}
$\mathcal{D}_{X} : \domain{}^{\text{op}} \to \sets$ is a contravariant functor.
\vitem[D2 (Covering)]\label{defn:smoothology'-2}
$\mathcal{K}_{X}$ $\subset$ $\mathcal{D}_{X}$ $\subset$ $\mathcal{M}_{X}$ as contravariant functors.
\vitem[D3 (Locality)]\label{defn:smoothology'-3}
For given $U \in \Object{\domain{}}$ and $P \in \mathcal{M}_{X}(U)$, 
$P \in \mathcal{D}_{X}(U)$ if there exists $\{U_{\alpha}\}_{\alpha\in\Lambda} \in \Covering{\domain{}}(U)$ such that $P\vert_{U_{\alpha}} \in \mathcal{D}_{X}(U_{\alpha})$ for all $\alpha \in \Lambda$.
\end{enumerate*}%
We often call an element of $\mathcal{D}_{X}(U)$ a plot of $X$ on $U \in \Object{\domain{}}$, and \vspace{.5ex}$\mathcal{D} = \bigcup_{U}\mathcal{D}_{X}(U)$ is called a \textit{diffeology} on $X$ following \cite{MR753860,MR3025051}.

A map $f : X \to Y$ induces a natural transformation $f_{\!\ast} : \mathcal{M}_{X} \to \mathcal{M}_{Y}$ given by $f_{\!\ast}(P)=f{\comp}P$.
We say a map $f : X \to Y$ {\differentiabletxt}, if 
the natural transformation $f_{\!\ast} : \mathcal{M}_{X} \to \mathcal{M}_{Y}$ satisfies $f_{\!\ast}(\mathcal{D}_{X}(U)) \subset \mathcal{D}_{Y}(U)$ for every $U \!\in\! \Object{\domain{}}$.
We say $(X,A)$ a pair of diffeological spaces, if $X$ is a diffeological space and $A$ is its diffeological subspace.

For other basics including induction and subduction, please have a look at \cite{MR3025051}.

Now we introduce a \textit{weak dimension} for a pair of diffeological spaces $(X,A)$ with an induction $\iota : A \hookrightarrow X$:
a family $\mathcal{F}$ of smooth maps to $X$ is called a generalised generating family (or GGF) on $(X,A)$, if the following two conditions are satisfied.
\begin{enumerate*}
\vitem 
the domain $\dom{f}$ of $f \in \mathcal{F}$ is open in $\real_+^{d}$, $d \ge 0$, 
where $\real_+=[0,\infty)$.
\vitem 
$F : A \amalg \coprod_{f \in \mathcal{F}}\dom{f} \rightarrow X$ given by $F|_{A}=\iota$ and $F|_{\dom{f}}=f$, is a subduction.
\end{enumerate*}%
Let $\wdim\mathcal{F}$ be the smallest $d \ge 0$ such that $\dim\dom{f} \le d$ for any $f \in \mathcal{F}$.
\begin{defn}
$\wdim{(X,A)}=\min\,\{\,\wdim\mathcal{F} \mid \text{$\mathcal{F}$ is a GGF on $(X,A)$}\,\}$. 
\end{defn}
When $A=\emptyset$, we denote $\wdim{X}=\wdim{(X,\emptyset)}$.
If $M$ is a $d$-manifold with corners, we have $\wdim{M}=d$, while $\dim{M}=\infty$ (see \cite{MR3025051}).

\section{Smooth Handles}

Taking $D$-topology (see \cite{MR3025051}) gives a left-adjoint forgetful functor $T : \smoothology{} \to \topology{}$.
Let us denote by $\gentop$ the category of numerically generated topological spaces, introduced by Shimakawa-Yoshida-Haraguchi in \cite{MR3884529} as the image of $T$.
For a smooth manifold with or without boundary, $T(X)$ is often denoted again by $X$.
Let $\lambda : \real \to \real$ be a smooth function given as follows:
\par\vskip1ex\noindent\hfil$\displaystyle
\lambda(t) = \frac1{\alpha}{\cdot}\!\int_{0}^{t}\!\ell(3x){\cdot}\ell(3{-}3x)\diff{x},\quad
\alpha=\int_{0}^{1}\!\ell(3x){\cdot}\ell(3{-}3x)\diff{x},\quad
\ell(t) = \begin{cases}\,0,&t \!\le\! 0,\\\,e^{-\fracinlines1/t},&t\!>\!0.\end{cases}
$\hfil\par\vskip1ex\noindent
According to \cite{wolfram2}, $e^{\fracinlines4/3}{\cdot}\alpha = 0.55731493\cdots > \fracinline{11}/{20}$, and hence we have $\frac{\ell(\fracinlines3/2)^{2}}{\alpha} < \fracinline{20}/{11}$.
Further, we obtain $\ell'(t)=\frac1{t^{2}}{\cdot}\ell(t)$ if $t>0$, and $\lambda$ enjoys the following four properties.
\begin{enumerate*}%
\vitem\label{prty:lambda1} $\lambda(t)=0$ if $t \le 0$,
\hitem\label{prty:lambda3} $\lambda(t)+\lambda(1{-}t)=1$,
\hitem\label{prty:lambda2} $\lambda'(t)=\frac{1}{\alpha}{\cdot}\ell(3t){\cdot}\ell(3{-}3t)$, 
\vitem\label{prty:lambda4} $\der{}by{t}\ell(3t)=\frac1{3t^{2}}{\cdot}\ell(3t)$ if $t > 0$, and $\lambda''(t)=\frac{1-2t}{3t^{2}(1-t)^{2}}{\cdot}\lambda'(t)$ if $0 < t < 1$.
\end{enumerate*}%
By \ref{prty:lambda3}), we have $\lambda(\fracinline1/2)=\fracinline1/2$.
By \ref{prty:lambda4}), together with the fact that $\lambda(0)=0$ and $\lambda(\fracinline1/2)=\fracinline1/2$, we obtain that $0 < \lambda(t) < t$ if $0 < t < \fracinline1/2$.
It is also follows from \ref{prty:lambda1}) and \ref{prty:lambda3}) that $\lambda(t)=1$ if $t \ge 1$.
We obtain the following proposition by \ref{prty:lambda2}) and \ref{prty:lambda4}).
\begin{prop}\label{prop:maximumlambda'}
$\lambda'(t) \le \lambda'(\fracinline1/2)=\frac{\ell(\fracinlines3/2)^{2}}{\alpha} < \fracinline{20}/{11}$ for all $t \in \real$.
\end{prop}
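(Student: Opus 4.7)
The statement to prove combines a maximization claim (that $\lambda'$ attains its maximum at $t=1/2$) with the evaluation at that point and a numerical bound. My plan is to handle these in three small steps.

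First, I would show $\lambda'(t)\le\lambda'(1/2)$. On $(-\infty,0]\cup[1,\infty)$ this is automatic: by property \ref{prty:lambda2}) and the definition of $\ell$, $\lambda'(t)=0$ there while $\lambda'(1/2)>0$. On the open interval $(0,1)$, property \ref{prty:lambda2}) gives $\lambda'(t)>0$, so the formula $\lambda''(t)=\frac{1-2t}{3t^{2}(1-t)^{2}}\cdot\lambda'(t)$ from property \ref{prty:lambda4}) shows $\lambda''(t)$ has the same sign as $1-2t$. Hence $\lambda'$ is strictly increasing on $(0,1/2)$ and strictly decreasing on $(1/2,1)$, so it attains its maximum at $t=1/2$.

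Second, I would evaluate $\lambda'(1/2)$. Property \ref{prty:lambda2}) gives $\lambda'(1/2)=\frac{1}{\alpha}\cdot\ell(3/2)\cdot\ell(3/2)=\frac{\ell(3/2)^{2}}{\alpha}$, which is the claimed middle expression. Since $\ell(3/2)=e^{-2/3}$, this equals $\frac{e^{-4/3}}{\alpha}=\frac{1}{e^{4/3}\cdot\alpha}$.

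Third, the final strict inequality $\frac{\ell(3/2)^{2}}{\alpha}<20/11$ follows directly from the numerical bound recalled in the paragraph before the proposition, namely $e^{4/3}\cdot\alpha=0.55731493\cdots>11/20$, by taking reciprocals. The only non-routine ingredient is thus this external numerical fact (attributed to \cite{wolfram2}); aside from that, the argument is a one-line monotonicity computation via the explicit formula for $\lambda''$. So the proof should be short and essentially mechanical.
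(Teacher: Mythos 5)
Your proposal is correct and follows exactly the route the paper intends: it deduces the proposition from properties \ref{prty:lambda2}) and \ref{prty:lambda4}) (sign of $\lambda''$ matching $1-2t$, hence maximum at $t=\fracinline1/2$) together with the numerical bound $e^{\fracinlines4/3}{\cdot}\alpha>\fracinline{11}/{20}$ quoted just before the statement. No gaps.
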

Let $\phi : \real \to [0,\infty)$ be a smooth function given as follows. 
\par\vskip1ex\noindent\hfil$\displaystyle
\phi(t) = \int_{0}^{\fracinlines1/2+t}\!\!\lambda(x)\diff{x}
$\hfil\par\vskip1ex\noindent
Then by \ref{prty:lambda1}) through \ref{prty:lambda4}), $\phi$ enjoys the following another four properties.

\begin{enumerate*}%
\addtocounter{enumi}{4}
\vitem\label{prty:phi1}
$\phi(t)=0$ if $t \le -\fracinline1/2$,
\hitem\label{prty:phi3}
$\phi(t)=t$ if $t \ge \fracinline1/2$,
\hitem\label{prty:phi2}
$0<\phi(0)<\fracinline1/8$,
\vitem\label{prty:phi5}
$\phi$ is monotone increasing on $\real$ and strictly monotone on $[-\fracinline1/2,\infty)$.
\end{enumerate*}%
Then we obtain the following proposition.
\begin{prop}\label{prop:relation-phi}
$\phi(t)-\phi(-t)=t$ for all $t \in \real$ and $\phi(t)+\phi(-t)=|t|$ if $|t| \ge \fracinline1/2$.
\end{prop}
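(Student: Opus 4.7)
The plan is to read off both identities directly from the properties of $\lambda$ and $\phi$ already established, with essentially no further analysis needed.

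For the first identity, I would differentiate both sides with respect to $t$. From the definition $\phi(t)=\int_{0}^{1/2+t}\lambda(x)\,dx$ and the fundamental theorem of calculus, $\phi'(t) = \lambda(1/2+t)$, so
\[
\frac{d}{dt}\bigl(\phi(t)-\phi(-t)\bigr) = \lambda(\tfrac{1}{2}+t) + \lambda(\tfrac{1}{2}-t) = 1,
\]
where the last equality is property (\ref{prty:lambda3}) applied with argument $1/2+t$. Since $\phi(t)-\phi(-t)$ vanishes at $t=0$, integrating from $0$ to $t$ gives $\phi(t)-\phi(-t)=t$ for all $t \in \real$. An equivalent and perhaps more transparent route, which I would fall back on if a direct integral manipulation is preferred, is to write $\phi(t)-\phi(-t)=\int_{-t}^{t}\lambda(1/2+u)\,du$ after the substitution $x=1/2+u$, and then observe that (\ref{prty:lambda3}) makes $\lambda(1/2+u)-1/2$ an odd function of $u$, so the integral collapses to $t$.

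For the second identity no new computation is needed. When $t \ge 1/2$, property (\ref{prty:phi3}) gives $\phi(t)=t$, while $-t \le -1/2$ so property (\ref{prty:phi1}) gives $\phi(-t)=0$; hence $\phi(t)+\phi(-t)=t=|t|$. The case $t \le -1/2$ is identical after swapping $t$ with $-t$.

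There is no genuine obstacle here; both identities are essentially bookkeeping. The only point worth emphasizing is where each assumption enters: the first identity depends essentially on the symmetry (\ref{prty:lambda3}) of $\lambda$ around $1/2$, whereas the second identity is purely a matter of assembling the piecewise descriptions (\ref{prty:phi1}) and (\ref{prty:phi3}) outside the transition interval $[-1/2,1/2]$, and is therefore silent about what happens for $|t|<1/2$ (which, by the first identity combined with (\ref{prty:phi2}), would necessarily exceed $|t|$ there).
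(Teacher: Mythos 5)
Your proof is correct and takes essentially the same route as the paper: the first identity is obtained by differentiating $\phi(t)-\phi(-t)$, using $\lambda(\tfrac12+t)+\lambda(\tfrac12-t)=1$ and the vanishing at $t=0$, and the second identity is the same two-case assembly of the piecewise descriptions of $\phi$ outside $(-\tfrac12,\tfrac12)$. Nothing further is needed.
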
\vspace{-2ex}
\begin{proof}
Firstly, $\phi(0)-\phi(0)=0$, and $\der{}by{t}(\phi(t)-\phi(-t))=\lambda(\fracinline1/2{+}t)+\lambda(\fracinline1/2{-}t)=1$ by \ref{prty:lambda3}).
Thus $\phi(t)-\phi(-t)=t$.
Secondly, if $t \ge \fracinline1/2$, then $\phi(t)=t$ by \ref{prty:phi3}) and $\phi(-t)=0$ by \ref{prty:phi1}), and hence we have $\phi(t)+\phi(-t) = t = |t|$.
If $t \le \fracinline{-1}/2$, then $\phi(t)=0$ by \ref{prty:phi1}) and $\phi(-t)=-t=|t|$ by \ref{prty:phi3}) as well, and hence we have $\phi(t)+\phi(-t) = |t|$ if $|t| \ge \fracinline1/2$.
\end{proof}

Now we are ready to introduce the following manifolds and their boundaries.
\begin{align*}&
\mathbb{S}^{n-1} = \{\, \mathbb{u} \in \real^{n} \mid \Vert{\mathbb{u}}\Vert \ge 1 \,\} = \real^{n} \smallsetminus \Int{D^{n}},\quad n \ge 1,
\\&
\partial\mathbb{S}^{n-1} = \{\,\mathbb{u} \in \real^{n} \mid \Vert{\mathbb{u}}\Vert = 1\,\} = S^{n-1},
\\&
\mathbb{D}^{n,m}=\{\,(\mathbb{u},\mathbb{v}) \in \real^{n} \times \real^{m} \mid \Vert{\mathbb{u}}\Vert \ge 1-\phi(2{-}\Vert{\mathbb{u}}\Vert{-}\Vert{\mathbb{v}}\Vert)\,\},
\\&
\partial\mathbb{D}^{n,m}=\{\,(\mathbb{u},\mathbb{v}) \in \real^{n} \times \real^{m} \mid \Vert{\mathbb{u}}\Vert = 1-\phi(2{-}\Vert{\mathbb{u}}\Vert{-}\Vert{\mathbb{v}}\Vert)\,\},
\\&
\mathbb{S}^{n-1,m} = \{\,(\mathbb{u},\mathbb{v}) \in \real^{n} \times \real^{m} \mid \Vert{\mathbb{u}}\Vert \ge 1\,\} = \mathbb{S}^{n-1} \times \real^{m},
\\&
\partial\mathbb{S}^{n-1,m} = \{\,(\mathbb{u},\mathbb{v}) \in \real^{n} \times \real^{m} \mid \Vert{\mathbb{u}}\Vert = 1\,\} = S^{n-1} \times \real^{m},
\end{align*}
where $D^{n} = \{\, \mathbb{v} \in \real^{n} \mid \Vert{\mathbb{v}}\Vert \le 1 \,\} \supset S^{n-1} = \{\, \mathbb{v} \in \real^{n} \mid \Vert{\mathbb{v}}\Vert = 1 \,\}$, and $\mathbb{S}^{-1} = S^{-1} = \emptyset$.
Further, let $\partial_{0}\mathbb{S}^{n-1,m} = \{\, (\mathbb{u},\mathbb{v}) \in \mathbb{S}^{n-1,m} \mid \Vert{\mathbb{u}}\Vert = 1, \ \Vert{\mathbb{v}}\Vert \le \fracinline3/2 \,\} \subset \partial\mathbb{S}^{n-1,m}
$ and $\mathbb{S}_{0}^{n-1,m} = \mathbb{S}^{n-1,m} \smallsetminus \partial_{0}\mathbb{S}^{n-1,m}$.
The following is an image of $(\mathbb{D}^{n,m},\mathbb{S}^{n-1,m},\partial\mathbb{S}^{n-1,m},\partial_{0}\mathbb{S}^{n-1,m})$.

\begin{picture}(400,120)(-105,0)\thicklines
\put(0,0){\includegraphics[width=70mm]{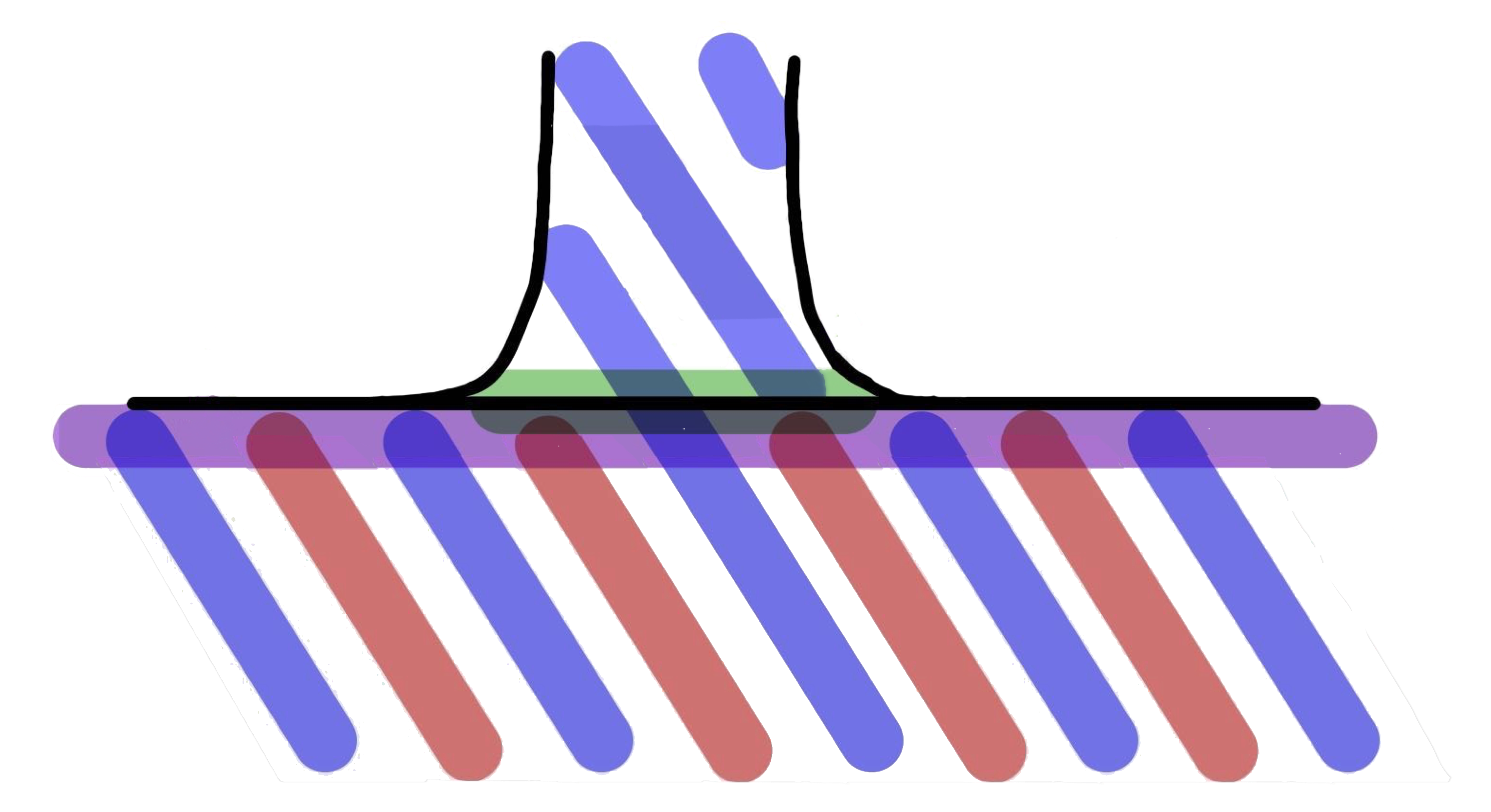}}
\put(-40,53)	{\makebox(0,0){\color{slateblue}$\mathbb{D}^{n,m}\,\,\left\{\vphantom{\begin{array}{c}\Big(\\\Big(\\\Big)\\\Big)\end{array}}\right.$}}
\put(200,52)	{\makebox(0,0)[l]{\color{violet}{\large$\longleftarrow$}\,\, $\partial\mathbb{S}^{n-1,m}$}}
\put(139,82)	{\makebox(0,0){\color{forestgreen}$\partial_{0}\mathbb{S}^{n-1,m}$}}
\put(116,77)	{\color{forestgreen}\vector(-1,-1){19}}
\put(220,25)	{\makebox(0,0)[l]{\color{darkbrown}$\left.\vphantom{\begin{array}{c}\Big(\\[.5ex]\big)\end{array}}\right\}\,\, \mathbb{S}^{n-1,m}$}}
\end{picture}

\begin{rem}\label{rem:sphere}
Then we see that $\mathbb{S}^{n-1} = \real^{n} \smallsetminus \Int{D^{n}} \homeo S^{n-1} \times [1,\infty)$, where $\mathbb{u} \in \mathbb{S}^{n-1}$ corresponds to $(\frac1{\Vert{\mathbb{u}}\Vert}\mathbb{u},\Vert{\mathbb{u}}\Vert) \in S^{n-1} \times [1,\infty)$.
Hence, $\Int\mathbb{S}^{n-1} = \real^{n} \smallsetminus D^{n} \homeo S^{n-1} \times (0,\infty)$.
\end{rem}

\begin{rem}\label{rem:m=0}
Since $\phi(u{-}2)-\phi(2{-}u)=u-2$ by Proposition \ref{prop:relation-phi}, $1-\phi(2{-}u)=u-1-\phi(u{-}2) \le u-1<u$.
Thus $\mathbb{D}^{n,0} = \Int\mathbb{D}^{n,0} = \real^{n}$, $\mathbb{S}^{n-1,0} = \mathbb{S}^{n-1}$ and $\Int\mathbb{S}^{n-1,0} = \Int\mathbb{S}^{n-1}$.
\end{rem}

\begin{prop}\label{prop:boundary-fathandle}
Let $(\mathbb{u},\mathbb{v}) \in \real^{n} \times \real^{m}$. 
Then $(\mathbb{u},\mathbb{v}) \in \partial\mathbb{D}^{n,m}$ if and only if $(\Vert{\mathbb{u}}\Vert,\Vert{\mathbb{v}}\Vert)=(1-\phi(-t),1+\phi(t))$ for $t=\Vert{\mathbb{u}}\Vert+\Vert{\mathbb{v}}\Vert-2$ .
\end{prop}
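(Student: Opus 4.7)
The plan is to unfold the definition of $\partial\mathbb{D}^{n,m}$ and reduce the problem to the identity $\phi(t)-\phi(-t)=t$ already furnished by Proposition \ref{prop:relation-phi}. The variable $t$ must be treated not as a free parameter but as the quantity $t=\Vert{\mathbb{u}}\Vert+\Vert{\mathbb{v}}\Vert-2$ determined by the point $(\mathbb{u},\mathbb{v})$, so that the identification $\phi(2{-}\Vert{\mathbb{u}}\Vert{-}\Vert{\mathbb{v}}\Vert)=\phi(-t)$ holds by construction. Keeping this bookkeeping clean is essentially the only thing to watch out for; there is no serious obstacle.

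For the forward direction, suppose $(\mathbb{u},\mathbb{v})\in\partial\mathbb{D}^{n,m}$. The defining equation of $\partial\mathbb{D}^{n,m}$ becomes $\Vert{\mathbb{u}}\Vert=1-\phi(-t)$ after substituting $-t$ for $2{-}\Vert{\mathbb{u}}\Vert{-}\Vert{\mathbb{v}}\Vert$, so the first coordinate of the claimed parametrisation is immediate. For the second, I would solve the defining relation $t=\Vert{\mathbb{u}}\Vert+\Vert{\mathbb{v}}\Vert-2$ for $\Vert{\mathbb{v}}\Vert$ to obtain $\Vert{\mathbb{v}}\Vert=t+2-\Vert{\mathbb{u}}\Vert=1+t+\phi(-t)$, and then apply Proposition \ref{prop:relation-phi} in the rearranged form $t+\phi(-t)=\phi(t)$ to conclude $\Vert{\mathbb{v}}\Vert=1+\phi(t)$.

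For the converse, suppose $(\Vert{\mathbb{u}}\Vert,\Vert{\mathbb{v}}\Vert)=(1-\phi(-t),1+\phi(t))$ with $t=\Vert{\mathbb{u}}\Vert+\Vert{\mathbb{v}}\Vert-2$. This system is consistent, as $(1-\phi(-t))+(1+\phi(t))=2+\phi(t)-\phi(-t)=2+t$ by Proposition \ref{prop:relation-phi}. Then $-t=2-\Vert{\mathbb{u}}\Vert-\Vert{\mathbb{v}}\Vert$, and substitution into $\Vert{\mathbb{u}}\Vert=1-\phi(-t)$ yields $\Vert{\mathbb{u}}\Vert=1-\phi(2{-}\Vert{\mathbb{u}}\Vert{-}\Vert{\mathbb{v}}\Vert)$, which places $(\mathbb{u},\mathbb{v})$ in $\partial\mathbb{D}^{n,m}$ by definition.
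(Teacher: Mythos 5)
Your argument is correct and coincides with the paper's own proof: both directions reduce to the defining equation of $\partial\mathbb{D}^{n,m}$ together with the identity $\phi(t)-\phi(-t)=t$ from Proposition \ref{prop:relation-phi}. The paper merely states the converse as ``clear,'' whereas you spell it out; otherwise the two proofs are the same.
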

\begin{proof}
Assume $(\mathbb{u},\mathbb{v}) \in \partial\mathbb{D}^{n,m}$.
Then $\Vert{\mathbb{u}}\Vert = 1-\phi(2{-}\Vert{\mathbb{u}}\Vert{-}\Vert{\mathbb{v}}\Vert)$, and by Proposition \ref{prop:relation-phi}, $\Vert{\mathbb{v}}\Vert = 1+\phi(\Vert{\mathbb{u}}\Vert{+}\Vert{\mathbb{v}}\Vert{-}2)$, and hence we obtain $(\Vert{\mathbb{u}}\Vert,\Vert{\mathbb{v}}\Vert)=(1-\phi(-t),1+\phi(t))$ for $t=\Vert{\mathbb{u}}\Vert+\Vert{\mathbb{v}}\Vert-2$.
The converse is clear.
\end{proof}

\begin{prop}
$\mathbb{S}^{n-1} \times \real^{m} \subset \mathbb{D}^{n,m} \supset \real^{n} \times D^{m}$.
\end{prop}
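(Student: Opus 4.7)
The plan is to reduce each inclusion to two very simple facts about $\phi$: first, that $\phi \ge 0$ on all of $\real$, and second, that $\phi(t) \ge t$ on all of $\real$. The first is immediate from property \ref{prty:phi1}) ($\phi$ vanishes on $(-\infty,-\fracinline1/2]$) together with property \ref{prty:phi5}) ($\phi$ is monotone increasing). The second follows from Proposition \ref{prop:relation-phi}, which gives $\phi(t) = t + \phi(-t) \ge t$ since $\phi(-t) \ge 0$.

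For the inclusion $\mathbb{S}^{n-1}\times\real^{m} \subset \mathbb{D}^{n,m}$, suppose $(\mathbb{u},\mathbb{v})$ has $\Vert\mathbb{u}\Vert \ge 1$. Then because $\phi \ge 0$, we have $1 - \phi(2-\Vert\mathbb{u}\Vert-\Vert\mathbb{v}\Vert) \le 1 \le \Vert\mathbb{u}\Vert$, which is precisely the defining inequality of $\mathbb{D}^{n,m}$.

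For the inclusion $\real^{n}\times D^{m} \subset \mathbb{D}^{n,m}$, suppose $(\mathbb{u},\mathbb{v})$ has $\Vert\mathbb{v}\Vert \le 1$. Setting $t = 2-\Vert\mathbb{u}\Vert-\Vert\mathbb{v}\Vert$ and applying $\phi(t) \ge t$ gives
\[
\Vert\mathbb{u}\Vert + \phi(2-\Vert\mathbb{u}\Vert-\Vert\mathbb{v}\Vert) \;\ge\; \Vert\mathbb{u}\Vert + (2-\Vert\mathbb{u}\Vert-\Vert\mathbb{v}\Vert) \;=\; 2-\Vert\mathbb{v}\Vert \;\ge\; 1,
\]
which rearranges to the required $\Vert\mathbb{u}\Vert \ge 1 - \phi(2-\Vert\mathbb{u}\Vert-\Vert\mathbb{v}\Vert)$.

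There is no real obstacle here; the only thing worth flagging is the extraction of the auxiliary inequality $\phi(t) \ge t$ from Proposition \ref{prop:relation-phi}, which is what makes the second inclusion work so cleanly. No case analysis on the sign of $t$ or on whether $\Vert\mathbb{u}\Vert \ge 1$ is needed.
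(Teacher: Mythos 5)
Your proposal is correct and follows essentially the same route as the paper: the first inclusion uses only $\phi \ge 0$, and your auxiliary inequality $\phi(t) \ge t$, extracted from Proposition \ref{prop:relation-phi} via $\phi(t) = t + \phi(-t) \ge t$, is just a rearrangement of the paper's inline chain $\Vert\mathbb{u}\Vert - 1 \ge \Vert\mathbb{u}\Vert + \Vert\mathbb{v}\Vert - 2 = \phi(s) - \phi(-s) \ge -\phi(-s)$. No further comment is needed.
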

\begin{proof}
Let $(\mathbb{u},\mathbb{v}) \in \mathbb{S}^{n-1} \times \real^{m}$.
Then $\Vert{\mathbb{u}}\Vert \ge 1 \ge 1-\phi(2{-}\Vert{\mathbb{u}}\Vert{-}\Vert{\mathbb{v}}\Vert)$ by the definition of $\phi$, and hence $(\mathbb{u},\mathbb{v}) \in \mathbb{D}^{n,m}$.
Let $(\mathbb{u},\mathbb{v}) \in \real^{n} \times D^{m}$.
Then we have $\Vert{\mathbb{v}}\Vert \le 1$ and $\Vert{\mathbb{u}}\Vert - 1 \ge \Vert{\mathbb{u}}\Vert + \Vert{\mathbb{v}}\Vert - 2 = \phi(\Vert{\mathbb{u}}\Vert{+}\Vert{\mathbb{v}}\Vert{-}2) - \phi(2{-}\Vert{\mathbb{u}}\Vert{-}\Vert{\mathbb{v}}\Vert) \ge - \phi(2{-}\Vert{\mathbb{u}}\Vert{-}\Vert{\mathbb{v}}\Vert)$ by Proposition \ref{prop:relation-phi} and the definition of $\phi$, and hence $(\mathbb{u},\mathbb{v}) \in \mathbb{D}^{n,m}$.
\end{proof}

We obtain the following theorems whose proofs shall be given in Appendix.

\begin{thm}\label{thm:diffeotypeofhandle}
There is a diffeomorphism $\Psi_{n,m} : \real^{n} \times D^{m} \to \mathbb{D}^{n,m}$.
\end{thm}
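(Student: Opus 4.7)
The plan is to exploit the rotational symmetry of $\real^{n}\times D^{m}$ and $\mathbb{D}^{n,m}$ in each of the two factors and to reduce the problem to a diffeomorphism in the radial parameters $(r,s)=(\Vert\mathbb{u}\Vert,\Vert\mathbb{v}\Vert)$. Concretely, I would look for $\Psi_{n,m}$ of the form $\Psi_{n,m}(\mathbb{u},\mathbb{v})=(A(r,s)\,\mathbb{u},\,B(r,s)\,\mathbb{v})$ with $A,B$ smooth functions of $(r^{2},s^{2})$. Writing the arguments as $r^{2},s^{2}$ rather than $r,s$ guarantees smoothness at $\mathbb{u}=0$ and $\mathbb{v}=0$, because $\Vert\mathbb{u}\Vert^{2}$ and $\Vert\mathbb{v}\Vert^{2}$ are smooth on $\real^{n}$ and $\real^{m}$.

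Proposition \ref{prop:boundary-fathandle} dictates the behaviour on $s=1$: taking the boundary parameter to be $t=r-1$ forces $A(r,1)\cdot r = 1-\phi(1-r)$ and $B(r,1) = 1+\phi(r-1)$, while $A(r,0)=B(r,0)=1$ is needed for $\Psi_{n,m}$ to restrict to the identity on $\mathbb{v}=0$. A natural choice meeting these constraints is
\[
A(r,s)=1+s^{2}\cdot\frac{1-r-\phi(1-r)}{r},\qquad B(r,s)=1+s^{2}\,\phi(r-1).
\]
The decisive point is that $\phi(t)=t$ for $t\ge 1/2$ and $\phi(t)=0$ for $t\le -1/2$, so both $1-r-\phi(1-r)$ and $\phi(r-1)$ vanish identically on $[0,1/2]$. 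Hence $A\equiv B\equiv 1$ near $r=0$, which removes the apparent singularity in $(1-r-\phi(1-r))/r$ and makes $\Psi_{n,m}$ smooth at $\mathbb{u}=0$.

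Setting $R=A\,r$, $S=B\,s$, the identity $\phi(r-1)-\phi(1-r)=r-1$ of Proposition \ref{prop:relation-phi} yields the clean formula $R+S = r+s+s^{2}(1-s)\bigl(1-r-\phi(1-r)\bigr)$. The inclusion $R+\phi(2-R-S)\ge 1$ defining $\mathbb{D}^{n,m}$ then reduces to a case check on the zones $r\le 1/2$, $1/2\le r\le 3/2$, $r\ge 3/2$, on each of which $\phi(1-r)$ has an explicit form. For bijectivity I would compute the Jacobian of $(r,s)\mapsto(R,S)$, which expands as
\[
\bigl(1+s^{2}b'(r)\bigr)\bigl(1+3s^{2}\phi(r-1)\bigr)\,-\,2s^{4}\,b(r)\,\lambda(r-1/2),
\]
where $b(r)=1-r-\phi(1-r)$. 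Since $b'(r)=\lambda(3/2-r)-1\in[-1,0]$ and $b(r)\le 0$, both summands are non-negative, and a zone-by-zone check shows they cannot vanish simultaneously; hence the Jacobian is strictly positive. Combined with a properness argument in $r$ at fixed $s$ and the already-verified boundary matching, this produces a global diffeomorphism on the radial profiles, which lifts to $\Psi_{n,m}$ via the angular actions.

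The main obstacle is the joint verification of (a) the inclusion condition in the transitional zone $1/2\le r\le 3/2$ with $0<s<1$, and (b) non-vanishing of the Jacobian at $s=1$ with $r\ge 3/2$, where the first summand vanishes and one must rely entirely on the second. Both rest on the quantitative features of $\phi$ encoded in Propositions \ref{prop:maximumlambda'} and \ref{prop:relation-phi}, together with the fact that $\phi$ is identically $0$ on $(-\infty,-1/2]$ and agrees with the identity on $[1/2,\infty)$.
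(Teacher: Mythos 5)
Your proposal is correct in outline and its key computations check out, but it constructs a genuinely different diffeomorphism from the paper's. The paper (Appendix A) uses the same radially equivariant architecture $(\mathbb{u},\mathbb{v})\mapsto(\alpha(r,s)\mathbb{u},\beta(r,s)\mathbb{v})$ reduced to a two\--dimensional profile map, but its interpolating functions are trigonometric, $\alpha=\fracinline3/2-p_{1}(u)(1-\cos(\fracinline{\pi}/2\cdot v))$ and $\beta=1+\phi(\fracinline3/2\cdot u-1)\sin(\fracinline{\pi}/2\cdot v)/v$, with smoothness at $\mathbb{v}=0$ secured by the analytic function $\sin(\fracinline{\pi}/2\cdot x)/x$ and smoothness at $\mathbb{u}=0$ by the vanishing of $\phi(\fracinline3/2\cdot u-1)/u$ near $0$; the Jacobian is handled by assuming it vanishes and deriving $\lambda=1$ together with $\phi=0$, a contradiction. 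Your choice $A=1-s^{2}\phi(r-1)/r$, $B=1+s^{2}\phi(r-1)$ (note $1-r-\phi(1-r)=-\phi(r-1)$ by Proposition \ref{prop:relation-phi}, which simplifies your formulas) buys several things: even dependence on $s$ makes smoothness at $\mathbb{v}=0$ automatic, the map is the identity on $\real^{n}\times\{0\}$ and on $\Vert\mathbb{u}\Vert\le\fracinline1/2$, and your Jacobian splits as a sum of two manifestly nonnegative terms whose zero sets ($s=1,\,r\ge\fracinline3/2$ versus $s=0$ or $r\le\fracinline1/2$) are disjoint --- a cleaner positivity argument than the paper's. The two places you flag as remaining work do close: for the image containment, setting $g=R+\phi(2-R-S)-1$ one has $g\ge g_{0}:=(1-s)+\phi(r+s-2)-s^{2}\phi(r-1)$ since $\phi$ is increasing, and $\partial g_{0}/\partial s=-1+\lambda(r+s-\fracinline3/2)-2s\phi(r-1)\le 0$ with $g_{0}(r,1)=0$, so $g\ge 0$ with equality only at $s=1$; surjectivity then follows from positivity of the Jacobian, properness (the coefficient of $r$ in $R+S$ is $1-s^{2}+s^{3}\ge\fracinline{23}/{27}$), and the fact that your map carries each boundary face $\{r=0\}$, $\{s=0\}$, $\{s=1\}$ bijectively onto the corresponding boundary face of the radial region. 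One point you should make explicit is smoothness of the \emph{inverse} at $\mathbb{y}=0$, which needs $(r,s^{2})\mapsto(R,S^{2})$ to be invertible with smooth inverse near $s=0$; this holds because its Jacobian there is unipotent, and it is the same issue the paper's $\Theta_{n,m}$ quietly relies on.
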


One can easily see that $\mathbb{D}^{n,m} \smallsetminus \partial_{0}\mathbb{S}^{n-1,m}$ is a union of two disjoint subsets $\mathbb{D}^{n,m} \smallsetminus \mathbb{S}^{n-1,m}$ and $\mathbb{S}_{0}^{n-1,m}$ both of which are open in $\mathbb{D}^{n,m}$.
In fact, we can easily see that $\partial_{0}\mathbb{S}^{n-1,m} = \widehat\Phi_{n,m}(S^{n-1} \times D^{m})$, $\mathbb{D}^{n,m} \smallsetminus \mathbb{S}^{n-1,m} = \widehat\Phi_{n,m}(\Int{D^{n}} \times D^{m})$ and $\mathbb{S}_{0}^{n-1,m} = \widehat\Phi_{n,m}((\real^{n} \smallsetminus D^{n}) \times D^{m})$.

\begin{thm}\label{thm:smoothtypeofhandle}
There is a smooth homeomorphism $\widehat\Phi_{n,m} : (\real^{n} \times D^{m},\mathbb{S}^{n-1} \times D^{m},S^{n-1} \times D^{m}) \to (\mathbb{D}^{n,m},\mathbb{S}^{n-1,m},\partial_{0}\mathbb{S}^{n-1,m})$ which is diffeomorphic apart from $S^{n-1} \times D^{m}$.
This implies that $(\real^{n} \times D^{m},\mathbb{S}^{n-1} \times D^{m})$ has the topological homotopy type of $(D^{n},S^{n-1})$.
\end{thm}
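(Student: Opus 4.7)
My plan is to construct $\widehat\Phi_{n,m}$ explicitly as a two-parameter radial reparameterization. Setting $r=\|\mathbb{u}\|$, $s=\|\mathbb{v}\|$, I look for smooth functions $A(r,s), B(r,s)$ on $\real_{+}\times[0,1]$ and define
$$\widehat\Phi_{n,m}(\mathbb{u},\mathbb{v})=\bigl(A(r,s)\,\tfrac{\mathbb{u}}{r},\;B(r,s)\,\tfrac{\mathbb{v}}{s}\bigr),$$
extended naturally across the loci $\mathbb{u}=0$ and $\mathbb{v}=0$. The three required subspace identities force the boundary data $A(0,s)\equiv 0$, $B(r,0)\equiv 0$, $A(1,s)\equiv 1$ (so $S^{n-1}\times D^{m}$ maps into $\{\|\mathbb{u}'\|=1\}$), $A(r,s)\ge 1$ for $r\ge 1$ (so $\mathbb{S}^{n-1}\times D^{m}$ maps into $\mathbb{S}^{n-1,m}$), and $(A(r,1),B(r,1))$ tracing $\partial\mathbb{D}^{n,m}$ via Proposition~\ref{prop:boundary-fathandle}. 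The simplest affine choice $t=\tfrac{3}{2}r-1$ for the boundary parameterization, together with a flat-at-endpoints profile $\sigma(s)=2\phi(s-\tfrac{1}{2})$ interpolating between the $s=0$ and $s=1$ behavior, suggest the candidate
$$A(r,s)=(1-\sigma(s))\,r+\sigma(s)\bigl(1-\phi(1-\tfrac{3}{2}r)\bigr),\qquad B(r,s)=s\bigl(1+\phi(\tfrac{3}{2}r-1)\bigr).$$

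The proof then reduces to four direct checks. (i) \emph{Smoothness} of $\widehat\Phi_{n,m}$: away from the axes this is immediate; at $\mathbb{u}=0$ and $\mathbb{v}=0$ it reduces to showing that $A/r$ and $B/s$ extend to smooth functions of $(\|\mathbb{u}\|^{2},\|\mathbb{v}\|^{2})$. Because $\phi$ is affine outside $[-\tfrac{1}{2},\tfrac{1}{2}]$, one has $1-\phi(1-\tfrac{3}{2}r)=\tfrac{3}{2}r$ and $\phi(\tfrac{3}{2}r-1)=0$ on $r\le\tfrac{1}{3}$, so both prefactors become explicit polynomials in $r$ near $r=0$; the flatness of $\sigma$ at $s=0$ similarly handles the $s$-side. (ii) \emph{Bijectivity} onto $\mathbb{D}^{n,m}$: strict $r$-monotonicity of $A$ and $B$ for $s>0$ gives injectivity, and surjectivity is checked algebraically on the three pieces $r\le\tfrac{1}{3}$, $\tfrac{1}{3}\le r\le 1$, $r\ge 1$ where $\phi$ is piecewise-simple. (iii) The \emph{subspace identifications} are built in by construction; in particular, $B(1,s)=\tfrac{3}{2}s$ realizes $\widehat\Phi_{n,m}(S^{n-1}\times D^{m})=\partial_{0}\mathbb{S}^{n-1,m}$, and for $r\ge 1$ the formulas collapse to $(A,B)=(\sigma(s)+(1-\sigma(s))r,\tfrac{3}{2}rs)$, realizing the bijection $\mathbb{S}^{n-1}\times D^{m}\to\mathbb{S}^{n-1,m}$. (iv) \emph{Diffeomorphism off $S^{n-1}\times D^{m}$}: the radial Jacobian $\partial_{r}A\cdot\partial_{s}B-\partial_{s}A\cdot\partial_{r}B$ is strictly positive on the open complement of $r=1$, using $\phi'>0$ on $(-\tfrac{1}{2},\tfrac{1}{2})$ and $\sigma'>0$ on $(0,1)$.

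The main obstacle will be step~(i), the joint smoothness at the axes: the prefactors $\mathbb{u}/r$ and $\mathbb{v}/s$ are singular there, and both the profile $\sigma$ and the affine ansatz $t(r)=\tfrac{3}{2}r-1$ are dictated by the need to reconcile radial and Cartesian coordinates; some iteration of the candidate formulas may be required to keep all four checks consistent. For the topological-homotopy consequence, through $\widehat\Phi_{n,m}$ the pair $(\real^{n}\times D^{m},\mathbb{S}^{n-1}\times D^{m})$ is topologically identified with $(\mathbb{D}^{n,m},\mathbb{S}^{n-1,m})$, and an explicit pair deformation retract of the latter onto $(D^{n},S^{n-1})$---first contracting the $\real^{m}$-factor of $\mathbb{S}^{n-1,m}=\mathbb{S}^{n-1}\times\real^{m}$ to the origin through $\mathbb{D}^{n,m}$, then retracting $\mathbb{S}^{n-1}$ onto $S^{n-1}$ together with a compatible retraction of $\mathbb{D}^{n,m}$ onto $D^{n}$---yields the desired equivalence.
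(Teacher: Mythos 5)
Your overall strategy coincides with the paper's: Appendix~B also builds $\widehat\Phi_{n,m}$ as a radial reparametrization $(\mathbb{u},\mathbb{v})\mapsto(\widehat\alpha(\Vert\mathbb{u}\Vert,\Vert\mathbb{v}\Vert)\mathbb{u},\beta(\Vert\mathbb{u}\Vert,\Vert\mathbb{v}\Vert)\mathbb{v})$ with profiles manufactured from $\phi$, checks the three subspace identifications on the slices $\Vert\mathbb{u}\Vert<1,\ =1,\ >1$, and then controls the Jacobian. However, your specific candidate fails at your step~(iv), and this is the one step you cannot wave at. Write $g(r)=1-\phi(1-\tfrac{3}{2}r)-r$, so $\partial_s A=\sigma'(s)g(r)$ with $g\ge 0$ on $[0,1]$ and $g(1-\varepsilon)=\varepsilon-\phi(\tfrac{3}{2}\varepsilon-\tfrac12)=\varepsilon(1+o(1))$. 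Evaluate the reduced Jacobian $J=\partial_rA\,\partial_sB-\partial_sA\,\partial_rB$ at $(r,s)=(1-2\delta,\,1-\delta)$ for small $\delta>0$: since $\lambda$ is flat at $0$ and at $1$, one gets $\partial_rA=(1-\sigma(1-\delta))+\tfrac32\sigma(1-\delta)\lambda(3\delta)=2\delta(1+o(1))$, $\partial_sB=\tfrac32+O(\delta)$, $\partial_sA=2\lambda(1-\delta)\,g(1-2\delta)=4\delta(1+o(1))$, and $\partial_rB=\tfrac32(1-\delta)\lambda(1-3\delta)=\tfrac32(1+O(\delta))$, whence
\begin{equation*}
J(1-2\delta,1-\delta)=3\delta-6\delta+o(\delta)=-3\delta(1+o(1))<0 ,
\end{equation*}
while $J>0$ on $s=0$ and on $r>1$. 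So $J$ changes sign and vanishes on a nonempty set inside $(\real^{n}\smallsetminus S^{n-1})\times D^{m}$: your map is not a diffeomorphism off $S^{n-1}\times D^{m}$, and since the boundary of the parameter square is carried homeomorphically onto the boundary of the target region (degree $1$), the presence of regular points with negative Jacobian forces non-injectivity as well. The conclusion of the theorem therefore does not hold for your $A,B$.

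You located the main obstacle in step~(i), but the axis smoothness is actually unproblematic (the flatness of $\phi$ at $-\tfrac12$ and of $\sigma$ at $s=0$ does exactly what you say). The real delicacy sits at the corner $S^{n-1}\times S^{m-1}$: there the transverse intersection of the wall $S^{n-1}\times D^{m}$ with $\partial(\real^{n}\times D^{m})$ must be carried onto the contact of $\partial_{0}\mathbb{S}^{n-1,m}$ with $\partial\mathbb{D}^{n,m}$, which is tangent to infinite order because $\phi$ is flat; no map can do this diffeomorphically, and an ansatz that keeps $J>0$ up to $r=1^{-}$ for $s<1$ (as yours does, $J(1,s)=\tfrac32(1-\sigma(s))$) inevitably pays for it by going negative nearby. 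The paper resolves this by making the \emph{entire} wall $r=1$ critical: its $\widehat\alpha(u,v)=\tfrac32-p_{1}(u)+p_{2}(u)\cos(\tfrac{\pi}{2}v)$ is designed so that both $\partial x/\partial u$ and $\partial x/\partial v$ vanish identically on $\Vert\mathbb{u}\Vert=1$, and the Jacobian is then checked to be nonzero everywhere else by the same computation as in Lemma~\ref{lem:fathandle}. To repair your construction you would need to redesign $A$ (not merely iterate constants) so that the degeneration is concentrated on $r=1$ in this way; the final homotopy-type paragraph of your argument is fine once a correct $\widehat\Phi_{n,m}$ is in hand.
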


\section{Fat CW Complex}

Let $(X,A)$ be a pair of diffeological spaces, and $\{(X^{(n)},A)\}$ a series of diffeological spaces, $n \ge -1$.

\begin{defn}\label{defn:smoothCW}
The pair $(X,A)$ is said to be a \textit{relative fat CW complex} with \textit{fat $n$-skeleta} $(X^{(n)},A)$, if there exists a series of smooth attaching maps 
\par\vskip1ex\noindent\hfil$\displaystyle
h_{n} : \underset{j \in J_{n}}{\textstyle\coprod} \mathbb{S}^{n-1,m_{n}(j)} \to X^{(n-1)},
$\hfil\par\vskip1ex\noindent
$n \ge 0$, where $J_{n}$ is an index set and $m_{n}$ is a map from $J_{n}$ to $\cardinal$ the set of non-negative integers, satisfying the following conditions:
\begin{enumerate}\setcounter{enumi}{-1}
\item 
$X^{(-1)}=A$.
\item 
For $n \ge 0$, $X^{(n)}$ is the pushout of the inclusion $i_{n} : \underset{j \in J_{n}}\coprod \mathbb{S}^{n-1,m_{n}(j)} \hookrightarrow \underset{j \in J_{n}}\coprod \mathbb{D}^{n,m_{n}(j)}$ and the attaching map $h_{n} : \underset{j \in J_{n}}{\textstyle\coprod} \mathbb{S}^{n-1,m_{n}(j)} \rightarrow X^{(n-1)}$ in $\smoothology{}$:
\par\vskip1ex\noindent\hfil$\begin{tikzcd}
{\underset{j \in J_{n}}{\textstyle\coprod} \mathbb{S}^{n-1,m_{n}(j)}}
\arrow[r,"h_{n}"]
\arrow[d,hook,"i_{n}"']
&
{X^{(n-1)}}
\arrow[d,hook,"\hat{i}_{n}"]
\\[1ex]
{\underset{j \in J_{n}}\coprod \mathbb{D}^{n,m_{n}(j)}}
\arrow[r,"\hat{h}_{n}"]
&
{X^{(n)}}
\end{tikzcd}%
$\hfil\par\vskip1ex
\item
$X$ is a diffeological colimit of $X^{(n)}$, $n \ge 0$.
\end{enumerate}
\end{defn}
A relative fat CW complex $(X,A)$ with $m_{k}=0$ for all $k \ge 0$ is called a \textit{relative thin CW complex}.
If $(X,\emptyset)$ is a relative fat CW complex, then $X$ is called a \textit{fat CW complex}, and if $(X,\emptyset)$ is a relative thin CW complex, then $X$ is called a \textit{thin CW complex}.

\begin{prop}\label{prop:dimension}
Let $M_{k}=\max\,\{\,m_{k}(j) \mid j \in J_{k}\,\}$, $k \ge 0$.
Then we have $\wdim{X^{(n)}}$ $\le$ $\max\,\{\,M_{k}\!+\!k \mid 0 \!\le\! k \!\le\! n\,\}$.
Hence $\wdim{(X^{(n)},A)}$ $\le$ $n$, provided that $X$ is relative thin.%
\end{prop}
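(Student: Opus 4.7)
The plan is induction on $n$, building a GGF on $(X^{(n)}, A)$ stage by stage by adjoining the $n$-handle characteristic maps. In the base case $n = -1$, take $\mathcal{F}_{-1} = \emptyset$: then $F_{-1} : A \to A = X^{(-1)}$ is the inclusion, which is trivially a subduction and has $\wdim \mathcal{F}_{-1} \le 0$. For the inductive step, assume $\mathcal{F}_{n-1}$ is a GGF on $(X^{(n-1)}, A)$ with $\wdim \mathcal{F}_{n-1} \le \max\{M_k + k \mid 0 \le k \le n-1\}$. Post-compose its generators with $\hat{i}_n : X^{(n-1)} \hookrightarrow X^{(n)}$ and adjoin new generators built from the $n$-handles.

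For each $j \in J_n$, Theorem \ref{thm:diffeotypeofhandle} supplies a diffeomorphism $\Psi_{n, m_n(j)} : \real^n \times D^{m_n(j)} \to \mathbb{D}^{n, m_n(j)}$, so the characteristic map $\hat{h}_n$ pulls back to a smooth map $g_{n, j} : \real^n \times D^{m_n(j)} \to X^{(n)}$. The source is a smooth manifold with boundary of dimension $n + m_n(j)$ and admits an atlas with chart domains open in $\real_+^{n + m_n(j)}$: interior charts modelled on open subsets of $\real^{n + m_n(j)}$ can be transported via the componentwise exponential into $(0, \infty)^{n + m_n(j)} \subset \real_+^{n + m_n(j)}$, while boundary charts modelled on open subsets of $\real^{n + m_n(j) - 1} \times \real_+$ can similarly be transported into $(0, \infty)^{n + m_n(j) - 1} \times \real_+ \subset \real_+^{n + m_n(j)}$. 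Composing $g_{n, j}$ with each such chart yields a new generator; take $\mathcal{F}_n$ to be the union of $\mathcal{F}_{n-1}$ (post-composed with $\hat{i}_n$) and all these new generators.

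The main step is to verify that $F_n : A \amalg \coprod_{f \in \mathcal{F}_n} \dom{f} \to X^{(n)}$ is a subduction. It factors as the coproduct of the inductive subduction $A \amalg \coprod_{f \in \mathcal{F}_{n-1}} \dom{f} \twoheadrightarrow X^{(n-1)}$ with the subduction $\coprod_{j, \alpha} U_{j, \alpha} \twoheadrightarrow \coprod_j \mathbb{D}^{n, m_n(j)}$ (charts composed with $\Psi$), followed by the canonical subduction $X^{(n-1)} \amalg \coprod_j \mathbb{D}^{n, m_n(j)} \twoheadrightarrow X^{(n)}$ arising from the defining pushout. Since subductions in $\diffeology$ are stable under both composition and coproduct, $F_n$ is a subduction. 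Each new generator has domain of dimension $n + m_n(j) \le n + M_n$, so $\wdim \mathcal{F}_n \le \max\{M_k + k \mid 0 \le k \le n\}$, completing the induction.

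For the thin case, Remark \ref{rem:m=0} gives $\mathbb{D}^{k, 0} = \real^k$, so the characteristic map at level $k$ is a single smooth map $\real^k \to X^{(k)}$; since $\real^k$ is itself diffeomorphic to the open subset $(0, \infty)^k$ of $\real_+^k$, one generator of dimension $k$ suffices per $k$-handle, yielding $\wdim(X^{(n)}, A) \le n$. The principal anticipated obstacle is the subduction verification above, which rests on making the interplay of pushouts with open covers in $\diffeology$ fully explicit — routine in spirit but deserving care.
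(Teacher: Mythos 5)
Your proof is correct and is essentially the argument the paper intends (the proposition is stated there without proof): induct on the skeleta, generate the $n$-th stage by the maps $\hat{h}_{n}{\comp}\Psi_{n,m_{n}(j)}$ precomposed with an atlas of $\real^{n}\times D^{m_{n}(j)}$ re-coordinatised inside $\real_{+}^{\,n+m_{n}(j)}$ via Theorem \ref{thm:diffeotypeofhandle}, and conclude by combining the inductive subduction with the canonical subduction $X^{(n-1)}\amalg\coprod_{j}\mathbb{D}^{n,m_{n}(j)}\twoheadrightarrow X^{(n)}$ from the defining pushout, using stability of subductions under coproducts and composition. The componentwise-exponential trick is exactly the right device to meet the requirement that generating domains be open in some $\real_{+}^{d}$, and the dimension count $n+m_{n}(j)\le M_{n}+n$ then gives the stated bound, with the thin case following since $M_{k}=0$.
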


\begin{expl}\label{expl:main1}
Let $\pi_{set} : \real \to \real$ be the (continuous) map defined as follows:\vspace{-1ex}
$$
\pi_{set}(t)=\max\{\,0, \min\,\{\,t,1\,\}\,\}=\min\,\{\,1, \max\,\{\,t,0\,\}\,\} \in [0,1] \subset \real.
$$\vskip-1ex
Then the diffeological quotient $\I=\real/\pi_{set}$ is a thin CW complex with a subduction $\pi : \real \to \I$ by $\pi(t)=[\pi_{set}(t)]$, $t \in \real$.
Thus we also have $\dim\I=1$.
\end{expl}

\begin{expl}
Let $(\check{X},\{\check{X}_{n}\})$ be a smooth CW complex with smooth attaching maps $\check{h}_{n} : S^{n-1} \to \check{X}^{(n-1)}$, $n \ge 0$.
Then there is a fat CW complex $(X,\{X^{(n)}\})$ and smooth injections $j_{n} : \check{X}_{n} \hookrightarrow X^{(n)}$, $n \ge 0$ with smooth attaching maps $h_{n}=j_{n-1}{\comp}\check{h}_{n}{\comp}p_{n-1} : \mathbb{S}^{n-1,0}$ $=$ $\mathbb{S}^{n-1}$ $\cong$ $S^{n-1} \times [1,\infty)$ $\xrightarrow{\proj_{1}}$ $S^{n-1}$ $\xrightarrow{\check{h}_{n}}$ $\check{X}_{n}$ $\overset{j_{n-1}}\hooklongrightarrow$ $X^{(n-1)}$, $n \ge 0$, by Remarks \ref{rem:m=0} and \ref{rem:sphere}.
Since smooth injections $j_{n}$ are topological homeomorphisms, each skeleton $\check{X}_{n}$ is topologically a fat CW complex, $n \ge 0$.
Further, we have $\Int{X^{(n)}} = X^{(n)}$, $n \ge 0$.
\vspace{1ex}
\par\noindent\hfil\hfil$\begin{tikzcd}
\underset{j \in J_{n}}\coprod\mathbb{S}^{n-1,0} \arrow[rrr,"h_{n}"] \arrow[ddd, hook] &                                          &                               & X^{(n-1)} \arrow[ddd, hook] \\
                            & \underset{j \in J_{n}}\coprod S^{n-1} \arrow[r,"\check{h}_{n}"] \arrow[d, hook] \arrow[lu, hook, "j'_{n-1}"'] & \check{X}_{n-1} \arrow[d, hook] \arrow[ru, hook, "j_{n-1}"] &                \\
                            & \underset{j \in J_{n}}\coprod D^{n} \arrow[r] \arrow[ld, hook]            & \check{X}_{n} \arrow[rd, hook, "j_{n}"]            &                \\
\underset{j \in J_{n}}\coprod\mathbb{D}^{n,0} \arrow[rrr]              &                                          &                               & X^{(n)}             
\end{tikzcd}
$\hfil
\end{expl}

\section{Basic Properties}

In this section, we show some basic properties of a fat CW complex.

Let $X$ be a collection of pairs $X^{(n)}$ of diffeological spaces and attaching maps $h_{n} : \underset{j \in J_{n}}{\textstyle\coprod} \mathbb{S}^{n-1,m_{n}(j)} \to X^{(n-1)}$.
Then it is clear that $X^{(n-1)}$ is a closed subset of $X^{(n)}$ under the $D$-topology.
We first show that a fat CW complex is a paracompactum.

\begin{prop}\label{prop:paracompact}
The $D$-topology of $X$ is paracompact and Hausdorff.
\end{prop}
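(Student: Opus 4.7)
My plan is to reduce the statement to classical topology using that the $D$-topology functor $T : \smoothology{} \to \topology{}$ is a left adjoint (hence preserves colimits), and then to proceed by induction on the skeleta $X^{(n)}$, finishing with a colimit argument for $X$ itself.

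First I would unpack the building blocks. By Theorem \ref{thm:smoothtypeofhandle}, each $\mathbb{D}^{n,m}$ is smoothly homeomorphic to $\real^{n} \times D^{m}$, and under this identification the boundary inclusion $\mathbb{S}^{n-1,m} \hookrightarrow \mathbb{D}^{n,m}$ becomes $\mathbb{S}^{n-1} \times D^{m} \hookrightarrow \real^{n} \times D^{m}$. The ambient space is paracompact, Hausdorff and metrisable, and the collar $\mathbb{S}^{n-1} \homeo S^{n-1} \times [1,\infty)$ from Remark \ref{rem:sphere} guarantees that this is a closed cofibration in $\topology{}$. Taking disjoint unions, the attaching inclusion $i_{n} : \coprod_{j} \mathbb{S}^{n-1,m_{n}(j)} \hookrightarrow \coprod_{j} \mathbb{D}^{n,m_{n}(j)}$ is then a closed cofibration between paracompact Hausdorff spaces.

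Next I would induct on $n$ to prove that $T(X^{(n)})$ is paracompact Hausdorff. The base case $X^{(-1)}=\emptyset$ (or, in the relative variant, an assumed paracompact Hausdorff $A$) is clear. For the inductive step, since $T$ preserves pushouts, $T(X^{(n)})$ is the topological pushout of $i_{n}$ along $h_{n} : \coprod_{j} \mathbb{S}^{n-1,m_{n}(j)} \to T(X^{(n-1)})$. I would then invoke the standard topological result that the pushout of paracompact Hausdorff spaces along a closed cofibration is again paracompact Hausdorff, which gives the inductive step.

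Finally, $T(X) = \colim T(X^{(n)})$ along the closed inclusions $T(X^{(n-1)}) \hookrightarrow T(X^{(n)})$. Hausdorffness is immediate, since any two distinct points of $X$ already lie in some common $T(X^{(n)})$. For paracompactness I would appeal to the theorem of Michael (the non-compact analogue of Milnor's CW-paracompactness argument) that an expanding sequence of paracompact Hausdorff spaces along closed cofibrations has a paracompact Hausdorff colimit. I expect the main obstacle to be precisely here: the classical CW-paracompactness proof exploits the compactness of the closed cells to patch partitions of unity cell-by-cell, but our fat cells $\mathbb{D}^{n,m} \cong \real^{n} \times D^{m}$ are non-compact whenever $n \ge 1$. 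The workaround is to rely systematically on the closed-cofibration property supplied by the smooth collar of $\mathbb{S}^{n-1,m}$ inside $\mathbb{D}^{n,m}$ provided by Theorem \ref{thm:smoothtypeofhandle}, so that partitions of unity may be built stagewise rather than cell-by-cell.
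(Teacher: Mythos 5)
Your proposal is correct and follows essentially the same route as the paper: induction on skeleta, using that $T$ preserves the pushouts, that an adjunction space of paracompact Hausdorff spaces along a closed embedding is again paracompact Hausdorff, and that these properties pass to the colimit along closed embeddings. The extra care you take with the collar/closed-cofibration structure and Michael's theorem is harmless but more than the paper requires --- it gets by with the weaker observation that $i_{n}$ and the maps $\hat{i}_{n}$ are closed embeddings.
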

\begin{proof}
We show that $X^{(n)}$ is paracompact and Hausdorff by induction on $n \ge -1$.
When $n=-1$, there is nothing to do.
So, we assume that we have done up to $n\!-\!1$, $n \ge 0$.
Firstly, $\mathbb{D}^{n,m} \homeo \real^{n}\times D^{m}$ is paracompact and Hausdorff as a closed subset of $\real^{n}\times\real^{m}$, and so is its closed subset $\mathbb{S}^{n-1,m}$.
Thus the coproducts $\underset{j \in J_{n}}{\textstyle\coprod} \mathbb{D}^{n,m_{n}(j)}$ and $\underset{j \in J_{n}}{\textstyle\coprod} \mathbb{S}^{n-1,m_{n}(j)}$ are paracompact and Hausdorff.
Secondly, since $i_{n} : \underset{j \in J_{n}}{\textstyle\coprod} \mathbb{S}^{n-1,m_{n}(j)} \hookrightarrow \underset{j \in J_{n}}{\textstyle\coprod} \mathbb{D}^{n,m_{n}(j)}$ is a closed embedding in \topology{}, the pushout $X^{(n)}$ of $i_{n}$ and $\hat{i}_{n} : X^{(n-1)} \to X^{(n)}$ is also paracompact and Hausdorff.
Finally, since ``paracompact and Hausdorff''-ness is preserved under taking colimit of closed embeddings, $X = \colim X^{(n)}$ is paracompact and Hausdorff.
\end{proof}

We then consider the inclusion map $\hat{i}_{n} : X^{(n-1)} \hookrightarrow X^{(n)}$, $n \!\ge\! 0$.

\begin{prop}\label{prop:hat_{h_{n}}}
For each $n \!\ge\! 0$, the map $\hat{i}_{n} : X^{(n-1)} \hookrightarrow X^{(n)}$ is an induction.
\end{prop}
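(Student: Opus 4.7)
The goal is to establish that a parametrization $P : U \to X^{(n-1)}$ is a plot of $X^{(n-1)}$ if and only if $\hat{i}_{n} \comp P$ is a plot of $X^{(n)}$. The forward direction is immediate from the smoothness of $\hat{i}_{n}$ (which is part of the pushout data). For the converse, axiom D3 reduces the problem to a purely local one: it suffices to show that every $u \in U$ has a neighborhood $V$ on which $P|_{V}$ is a plot of $X^{(n-1)}$.

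The plan is to exploit the explicit construction of the pushout: $X^{(n)}$ is the quotient of the disjoint union $X^{(n-1)} \amalg \coprod_{j \in J_{n}} \mathbb{D}^{n,m_{n}(j)}$ by the relation $x \sim h_{n}(x)$ for $x \in \coprod_{j} \mathbb{S}^{n-1,m_{n}(j)}$, equipped with the quotient diffeology. A parametrization into such a quotient is a plot if and only if it locally lifts through the quotient map to a plot of one of the two summands, i.e.\ it is locally of the form $\hat{i}_{n} \comp P'$ for some plot $P'$ of $X^{(n-1)}$, or of the form $\hat{h}_{n} \comp Q$ for some plot $Q$ of $\coprod_{j} \mathbb{D}^{n,m_{n}(j)}$. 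I would apply this to $\hat{i}_{n} \comp P$, fixing such a $V$ around the chosen $u$, and then handle the two alternatives.

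The first alternative is essentially trivial: $\hat{i}_{n}$ is injective on underlying sets, so $\hat{i}_{n} \comp P|_{V} = \hat{i}_{n} \comp P'$ forces $P|_{V} = P'$, a plot. In the second alternative, the identity $\hat{h}_{n} \comp Q = \hat{i}_{n} \comp P|_{V}$ pins the image of $Q$ into $\hat{h}_{n}^{-1}(\hat{i}_{n}(X^{(n-1)}))$; from the set-theoretic description of the pushout, this preimage is exactly $\coprod_{j} \mathbb{S}^{n-1,m_{n}(j)}$, because the complementary part $\coprod_{j}(\mathbb{D}^{n,m_{n}(j)} \setminus \mathbb{S}^{n-1,m_{n}(j)})$ is mapped bijectively onto $X^{(n)} \setminus \hat{i}_{n}(X^{(n-1)})$. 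Thus $Q = i_{n} \comp Q'$ for some $Q' : V \to \coprod_{j} \mathbb{S}^{n-1,m_{n}(j)}$. Since each $\mathbb{S}^{n-1,m}$ carries the subspace diffeology induced from $\mathbb{D}^{n,m}$ (both sitting as submanifolds of $\real^{n} \times \real^{m}$), the inclusion $i_{n}$ is itself an induction, so $Q'$ is a plot. Consequently $h_{n} \comp Q' : V \to X^{(n-1)}$ is a plot, and applying injectivity of $\hat{i}_{n}$ to $\hat{i}_{n} \comp (h_{n} \comp Q') = \hat{h}_{n} \comp Q = \hat{i}_{n} \comp P|_{V}$ yields $P|_{V} = h_{n} \comp Q'$.

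The main obstacle I foresee is justifying the local-factorisation description of plots of the pushout used in the middle step. This requires recalling that diffeological pushouts are quotients of coproducts and that the quotient diffeology is precisely the one generated by local liftings — a structural fact about $\diffeology$ that must be invoked (or briefly verified) before one can parse $\hat{i}_{n} \comp P$ this way. Once that is available, the argument is driven by the clean set-theoretic partition of $X^{(n)}$ into $\hat{i}_{n}(X^{(n-1)})$ and the image of the open handles, which is exactly what forces $Q$ onto the attaching spheres in the non-trivial case.
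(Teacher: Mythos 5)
Your proposal is correct and follows essentially the same route as the paper: both arguments use that the pushout $X^{(n)}$ is a quotient of $X^{(n-1)} \amalg \coprod_{j}\mathbb{D}^{n,m_{n}(j)}$, so a plot landing in $\Img{\hat{i}_{n}}$ locally lifts to one of the two summands, and in the handle case the lift is forced into $\coprod_{j}\mathbb{S}^{n-1,m_{n}(j)}$ because $\hat{h}_{n}^{-1}(\Img{\hat{i}_{n}})=\coprod_{j}\mathbb{S}^{n-1,m_{n}(j)}$, whence $i_{n}$ being an induction lets you push the lift down through $h_{n}$ to a plot of $X^{(n-1)}$. The ``main obstacle'' you flag --- that plots of the quotient are exactly the locally lifting parametrizations --- is precisely the subduction property the paper invokes from the definition of the pushout, so nothing further is missing.
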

\begin{proof}
Assume that $P : U \to X^{(n)}$ is a plot with its image contained in $\Img{\hat{i}_{n}}$.
Since there is a subduction $X^{(n-1)} \amalg \underset{j \in J_{n}}\coprod \mathbb{D}^{n,m_{n}(j)} \twoheadrightarrow X^{(n)}$ by the definition of $X^{(n)}$, there is an open cover $\{U_{\alpha}\}$ of $U$ such that $P|_{U_{\alpha}}$ can be pulled back to a plot from $U_{\alpha}$ to either $X^{(n-1)}$ or $\mathbb{D}^{n,m_{n}(j)}$ for some $j \in J_{n}$.
\par
In the first case, $P|_{U_{\alpha}}$ is pulled back to  a plot $P_{\alpha} : U_{\alpha} \to X^{(n-1)}$ as $P|_{U_{\alpha}} = \hat{i}_{n}{\comp}P_{\alpha}$.
\par
In the second case, $P|_{U_{\alpha}}$ is pulled back to a plot $P_{\alpha} : U_{\alpha} \to \mathbb{D}^{n,m_{n}(j)}$ for some $j \in J_{n}$ as $P|_{U_{\alpha}} = \hat{h}_{n}{\comp}P_{\alpha}$.
Then $\Img{P_{\alpha}} \subset (\hat{h}_{n})^{-1}(\Img{\hat{i}_{n}}) = \mathbb{S}^{n-1,m_{n}(j)}$ in $\mathbb{D}^{n,m_{n}(j)}$.
Since $i_{n} : \mathbb{S}^{n-1,m_{n}(j)} \hookrightarrow \mathbb{D}^{n,m_{n}(j)}$ is an induction, we may assume that $P_{\alpha} : U_{\alpha} \to \mathbb{S}^{n-1,m_{n}(j)}$ is a plot in $\mathbb{S}^{n-1,m_{n}(j)}$, and hence we obtain $P|_{U_{\alpha}} = \hat{h}_{n}{\comp}P_{\alpha} = \hat{i}_{n}{\comp}h_{n}{\comp}P_{\alpha}$.
\par
In either case, $P|_{U_{\alpha}}$ is pulled back to a plot in $X^{(n-1)}$.
Thus $\hat{i}_{n}$ is an induction.
\end{proof}

\section{Smooth Functions}\label{sect:EMSF}

\begin{thm}\label{thm:main-partition}
For any open covering $\mathcal{U}$ on a fat CW complex, there is a smooth partition of unity subordinate to $\mathcal{U}$.
\end{thm}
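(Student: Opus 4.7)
The plan is the standard two-step argument: (i) reduce to constructing sufficiently many smooth bump functions by exploiting paracompactness of $X$, and (ii) build those bumps by induction up the skeletal filtration of the fat CW structure.

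For (i), by Proposition \ref{prop:paracompact}, $X$ is paracompact and Hausdorff in its $D$-topology. So once we show that for every $x \in X$ and every open $V \ni x$ contained in some $U \in \mathcal{U}$ there is a smooth $\rho_{x,V} : X \to [0,1]$ with $\rho_{x,V}(x) > 0$ and $\supp \rho_{x,V} \subset V$, the usual argument finishes: the cover $\{\{\rho_{x,V} > 0\}\}$ has a locally finite refinement $\{\rho_i\}_{i \in I}$, the locally finite sum $\sigma = \sum_i \rho_i$ is strictly positive and smooth, and $\{\rho_i/\sigma\}$ (after grouping indices by their target $U \in \mathcal{U}$) is the required smooth partition of unity.

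The content is the construction of $\rho_{x,V}$, which I would do by induction on the skeletal filtration. Let $n$ be minimal with $x \in X^{(n)}$. By the pushout definition, $x = \hat{h}_n(\widehat\Phi_{n,m}(p,q))$ for some $j \in J_n$ and $(p,q) \in \Int D^n \times D^m$ with $m = m_n(j)$, so a small Euclidean bump at $(p,q)$ supported in $\Int D^n \times D^m$ pushes forward along $\widehat\Phi_{n,m}$ (a diffeomorphism off $S^{n-1} \times D^m$ by Theorem \ref{thm:smoothtypeofhandle}) to a smooth function on $\mathbb{D}^{n,m}$ vanishing on an open neighbourhood of $\mathbb{S}^{n-1,m}$; extending by zero produces a smooth $\rho^{(n)} : X^{(n)} \to [0,1]$ positive at $x$ and supported in an arbitrarily small neighbourhood of $x$ in $X^{(n)}$. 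For the inductive step, assume $\rho^{(k-1)} : X^{(k-1)} \to [0,1]$ has been constructed. By the pushout, extending to $X^{(k)}$ amounts to supplying, for each $j \in J_k$, a smooth function on $\mathbb{D}^{k,m_k(j)}$ whose restriction to $\mathbb{S}^{k-1,m_k(j)}$ equals $\rho^{(k-1)} \circ h_k$. Transporting through $\widehat\Phi_{k,m_k(j)}$, this reduces to extending a smooth function from $\{\|\mathbb{u}\| \ge 1\} \times D^{m_k(j)}$ to $\real^k \times D^{m_k(j)}$ with controlled support, which I would do by a Whitney-type jet extension into a collar of $\{\|\mathbb{u}\|=1\}$ combined with a smooth cutoff in $\|\mathbb{u}\|$. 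The diffeological colimit $\rho_{x,V} = \varinjlim \rho^{(k)}$ is then smooth because smoothness on a colimit is tested skeleton by skeleton.

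The main obstacle lies in this extension step, and in particular in making the extension smooth across the hypersurface $\{\|\mathbb{u}\| = 1\} \times D^{m_k(j)}$, where $\widehat\Phi_{k,m_k(j)}$ ceases to be a diffeomorphism, while simultaneously keeping the support tight enough that after transfer into $X$ it stays inside $V$. This is where the fat structure is indispensable: the design of $\mathbb{D}^{k,m}$ via $\phi$ provides a genuine smooth open collar of $\{\|\mathbb{u}\|=1\}$ inside the ambient Euclidean space $\real^k \times \real^m$, so smoothness of the extended function can be checked in ambient coordinates rather than through $\widehat\Phi_{k,m_k(j)}$. A careful choice of cutoff, scaled using the local finiteness of the cover refined in step (i), supplies the necessary support control and completes the construction.
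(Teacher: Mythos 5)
Your skeletal induction and your identification of the key technical point are sound: like the paper, everything ultimately rests on extending a smooth function from the closed subset $\mathbb{S}^{n-1,m_n(j)}$ of the manifold with boundary $\mathbb{D}^{n,m_n(j)}\cong\real^{n}\times D^{m_n(j)}$ with control on the support, and you are right that one must not transport through $\widehat\Phi_{n,m}$ (which degenerates on $S^{n-1}\times D^{m}$) but rather work in ambient coordinates or through the genuine diffeomorphism $\Psi_{n,m}$ of Theorem \ref{thm:diffeotypeofhandle}. The genuine gap is in your step (i). On a paracompact Hausdorff space, ``a smooth bump positive at each point with support in any prescribed neighbourhood'' does not formally yield a smooth partition of unity: after refining the cover $\{\{\rho_{x,V}>0\}\}$ to a locally finite open family $\{O_a\}$, the supports of the selected bumps $\rho_{x_a,V_a}$ need not themselves form a locally finite family, so $\sum_a\rho_{x_a,V_a}$ need not be smooth or everywhere positive; and the standard repair --- shrink to $\{O'_a\}$ with $\Cl{O'_a}\subset O_a$ and produce a smooth function positive on $\Cl{O'_a}$ and supported in $O_a$ --- requires a smooth Urysohn function for a closed-in-open pair, which is exactly a two-element smooth partition of unity, i.e.\ the statement being proved. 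The usual escape in manifold theory (coordinate balls with compact closure) is unavailable here, since a fat CW complex need not be locally compact along lower skeleta where infinitely many cells accumulate; and your bump construction, anchored at ``the minimal $n$ with $x\in X^{(n)}$'' and hence at a single open cell, does not upgrade to closed sets, which in general meet infinitely many cells of unbounded dimension.

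The paper closes this gap by reorganising your induction so that the \emph{entire} partition of unity is built at once, skeleton by skeleton, making positivity of the sum automatic. Given $\{\rho_U\}$ on $X^{(n-1)}$ subordinate to $\mathcal{U}_{n-1}$, pull it back along $h^{j}_{n}$ to $\mathbb{S}^{n-1,m_n(j)}$ and extend each $\rho_U{\comp}h^{j}_{n}$ to $\hat\rho^{j}_{U}$ on $\mathbb{D}^{n,m_n(j)}$ with $\supp{\hat\rho^{j}_{U}}\subset\hat h_n^{-1}(U)$ (your extension step); the sum $\sum_V\hat\rho^{j}_{V}$ equals $1$ on $\mathbb{S}^{n-1,m_n(j)}$, hence is nonzero on a neighbourhood $V_{\!j}$. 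Separately, take a genuine smooth partition of unity $\{\hat\sigma_U\}$ on the manifold with boundary $\mathbb{D}^{n,m_n(j)}$ subordinate to $\{\hat h_n^{-1}(U)\}$, blend the two families by a two-function partition of unity subordinate to $\{V_{\!j},\,\mathbb{D}^{n,m_n(j)}\smallsetminus\mathbb{S}^{n-1,m_n(j)}\}$, and normalise; the result restricts on $\mathbb{S}^{n-1,m_n(j)}$ to the pulled-back family and so glues with $\{\rho_U\}$ over the pushout defining $X^{(n)}$. If you replace your per-point bump induction by this per-cover induction, your argument matches the paper's; as written, step (i) does not close.
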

\begin{proof}
By Proposition \ref{prop:paracompact}, we may assume that $\mathcal{U}$ is of locally finite.
Let $X$ be a fat CW complex with fat skeletons $\{X^{(n)}\}$, and $\mathcal{U}_{n}=\{\,U \cap X^{(n)} \mid U \in \mathcal{U}\,\}$.
We show that there is a smooth partition of unity on $X^{(n)}$ subordinate to $\mathcal{U}_{n}$ by induction on $n \ge 0$.
\par(Case $n\!=\!0$)
Since $D^{m}$ is a manifold with boundary, we have nothing to do.
\par(Case $n\!>\!0$)
By induction hypothesis, there is a smooth partition of unity $\{\,\rho_{U}\,\}_{U \in \mathcal{U}}$ on $X^{(n-1)}$ subordinate to $\mathcal{U}_{n-1}$.
By pulling back $\rho_{U}$'s by $h^{j}_{n} : \mathbb{S}^{n-1,m_{n}(j)} \to X^{(n-1)}$ the restriction of $h_{n} : \underset{j \in J_{n}}{\coprod} \mathbb{S}^{n-1,m_{n}(j)} \to X^{(n-1)}$ to $\mathbb{S}^{n-1,m_{n}(j)}$, we obtain a smooth partition of unity $\{\rho_{U}{\comp}h^{j}_{n}\}_{U \in \mathcal{U}}$ on $\mathbb{S}^{n-1,m_{n}(j)}$, $j \in J_{n}$ subordinate to $\mathcal{U}^{j}_{n} = \{\,h_{n}^{-1}(U) \cap \mathbb{S}^{n-1,m_{n}(j)} \mid U \in \mathcal{U} \,\}$ of $\mathbb{S}^{n-1,m_{n}(j)}$, such that we obtain $\supp{\rho_{U}{\comp}h^{j}_{n}} \subset h_{n}^{-1}(U)$.
Then we take a smooth extension $\hat{\rho}^{j}_{U} : \mathbb{D}^{n,m_{n}(j)} \to \real$ of $\rho_{U}{\comp}h^{j}_{n}$ such that $\supp{\hat{\rho}^{j}_{U}} \subset \hat{h}_{n}^{-1}(U)$.
We also have a neighbourhood $V_{\!j}$ of $\mathbb{S}^{n-1,m_{n}(j)}$ in $\mathbb{D}^{n,m_{n}(j)}$ on which $\underset{V \in \mathcal{U}}\sum \hat{\rho}^{j}_{V} \not= 0$.

Let $\hat{\mathcal{U}}^{j}_{n}=\{\,\hat{h}_{n}^{-1}(U) \cap \mathbb{D}^{n,m_{n}(j)} \mid U \in \mathcal{U}\,\}$ an open cover of $\mathbb{D}^{n,m_{n}(j)}$.
Since $\mathbb{D}^{n,m_{n}(j)}$ is a manifold with boundary, we have a smooth partition of unity $\{\hat\sigma_{U}\}_{U \in \mathcal{U}}$ on $\mathbb{D}^{n,m_{n}(j)}$ subordinate to $\hat{\mathcal{U}}^{j}_{n}$ so that $\supp{\hat\sigma_{U}} \subset \hat{h}_{n}^{-1}(U) \cap \mathbb{D}^{n,m_{n}(j)}$ and $\underset{V \in \mathcal{U}}\sum \hat\sigma_{V} \not= 0$.

Since $\mathcal{V}_{\!j}=\{\,V_{\!j},\mathbb{D}^{n,m_{n}(j)} \smallsetminus \mathbb{S}^{n-1,m_{n}(j)}\,\}$ is an open covering of $\mathbb{D}^{n,m_{n}(j)}$, and hence there is a smooth partition of unity $\{\,\chi_{1}, \chi_{2}\,\}$ subordinate to $\mathcal{V}_{\!j}$:
\par\vskip1ex\noindent\hfil$\displaystyle
\chi_{1}+\chi_{2}=1,\qquad \begin{cases}\,
\chi_{1} : \mathbb{D}^{n,m_{n}(j)} \to \real,\quad
\supp{\chi_{1}} \subset \mathbb{D}^{n,m_{n}(j)} \smallsetminus \mathbb{S}^{n-1,m_{n}(j)},
\\[.5ex]\,
\chi_{2} : \mathbb{D}^{n,m_{n}(j)} \to \real,\quad
\supp{\chi_{2}} \subset V_{\!j},
\end{cases}
$\hfil\par\vskip1ex\noindent
Using the above functions, we define smooth functions $\sigma^{j}_{U}$ on $\mathbb{D}^{n,m_{n}(j)}$ as follows.
\par\vskip1ex\noindent\hfil$\displaystyle
\check\sigma^{j}_{U}(\mathbb{x})=\begin{cases}\,
\chi_{1}(\mathbb{x}){\cdot}\hat\sigma^{j}_{U}(\mathbb{x}) + \chi_{2}(\mathbb{x}){\cdot}\hat{\rho}^{j}_{U}(\mathbb{x}),&\mathbb{x} \in V_{\!j},
\\[.5ex]\,
\hat\sigma^{j}_{U}(\mathbb{x}),&\mathbb{x} \in \mathbb{D}^{n,m_{n}(j)} \smallsetminus \supp{\chi_{2}}.
\end{cases}
$\hfil\par\vskip1ex\noindent
Hence $\supp{\check\sigma^{j}_{U}} \subset \supp{\hat\sigma_{U}} \cup \supp{\hat{\rho}^{j}_{U}} \subset \hat{h}_{n}^{-1}(U)$ and $\underset{V \in \mathcal{U}}\sum \check\sigma^{j}_{V} \not= 0$.
Then we define %
\par\vskip1ex\noindent\hfil$\displaystyle
\sigma^{j}_{U}(\mathbb{x}) = \frac{\check\sigma^{j}_{U}(\mathbb{x})}{\underset{V \in \mathcal{U}}\sum \check\sigma^{j}_{V}(\mathbb{x})},\quad \mathbb{x} \in \mathbb{D}^{n,m_{n}(j)},
$\hfil\par\vskip1ex\noindent
which gives a smooth partition of unity on $\mathbb{D}^{n,m_{n}(j)}$ subordinate to $\hat{\mathcal{U}}^{j}_{n}$ and is also an extension of a smooth partition of unity $\{\rho_{U}{\comp}h^{j}_{n}\}_{U \in \mathcal{U}}$ on $\mathbb{S}^{n-1,m_{n}(j)}$ subordinate to $\mathcal{U}^{j}_{n}$.
Smooth maps $\sigma^{j}_{U}$, $j \in J_{n}$ and $\rho_{U}$ are compatible data for us to obtain a smooth partition of unity on the pushout $X^{(n)}$.
\end{proof}

\begin{cor}
For a fat CW complex, the de Rham theorem holds.
\end{cor}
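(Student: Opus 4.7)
The plan is to imitate the classical Čech/Mayer--Vietoris derivation of the de Rham isomorphism, using Theorem~\ref{thm:main-partition} as the replacement for the manifold partition of unity. Let $\Omega^{*}$ denote Souriau's diffeological de Rham complex; the goal is to compare $H_{dR}^{*}(X)$ with the singular cohomology $H_{sing}^{*}(TX;\real)$ via the natural integration map $\Omega^{*}(X)\to C^{*}_{sing}(X;\real)$.

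First I would set up a Čech--de Rham double complex for a suitable open cover $\mathcal{U}$ of $X$. Theorem~\ref{thm:main-partition} furnishes a smooth partition of unity subordinate to $\mathcal{U}$, and the usual computation (multiply a cocycle by the bump functions and sum) makes the Čech direction acyclic in positive degree, so that the total cohomology coincides with $H_{dR}^{*}(X)$. The opposite spectral sequence computes Čech cohomology with values in the constant sheaf $\real$ as soon as the cover is \emph{good}, i.e.\ each finite intersection is de Rham acyclic in positive degree. Čech cohomology of such a good cover agrees with singular cohomology by Proposition~\ref{prop:paracompact} (paracompact Hausdorff), so all that remains is to produce such a cover.

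I would construct the good cover inductively on the fat skeleta. On $X^{(0)} = \coprod_{j} \mathbb{D}^{0,m_{0}(j)}$ the skeleton is a disjoint union of closed disks, so a classical good cover by convex open sets exists. For the inductive step, I would enlarge a good cover of $X^{(n-1)}$ by thickening each of its members along the open collar $\mathbb{S}_{0}^{n-1,m_{n}(j)} \subset \mathbb{D}^{n,m_{n}(j)}$ provided by Theorem~\ref{thm:smoothtypeofhandle}, and then complete the cover with open sets of $\mathbb{D}^{n,m_{n}(j)} \smallsetminus \mathbb{S}^{n-1,m_{n}(j)}$ chosen so that all intersections are diffeomorphic to convex open sets of $\real^{n} \times D^{m_{n}(j)}$ (again by Theorem~\ref{thm:diffeotypeofhandle}). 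Passage to $X = \colim X^{(n)}$ is then handled by a standard mapping-telescope/$\varprojlim^{1}$ argument, whose exactness on both sides again uses the availability of smooth partitions of unity from Theorem~\ref{thm:main-partition}.

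The main obstacle I anticipate is producing the thickenings in the inductive step coherently: one must enlarge each member of the inductively given cover across the attaching region $\mathbb{S}^{n-1,m_{n}(j)} \hookrightarrow \mathbb{D}^{n,m_{n}(j)}$ far enough into the handle to produce smooth open sets in the pushout $X^{(n)}$ whose multiple intersections remain diffeomorphic to convex Euclidean pieces, without damaging acyclicity on the $X^{(n-1)}$ side. The smoothing function $\phi$ built from $\lambda$ in Section~3 provides exactly the collar needed, and combined with Theorem~\ref{thm:main-partition} and Proposition~\ref{prop:hat_{h_{n}}} (the induction $\hat{i}_{n}: X^{(n-1)}\hookrightarrow X^{(n)}$) one may carry out the thickening diffeologically. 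Once the good cover is in hand, the de Rham isomorphism follows from the two spectral sequences of the Čech--de Rham double complex.
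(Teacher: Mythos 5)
The paper offers no argument for this corollary beyond its placement immediately after Theorem~\ref{thm:main-partition}: the intended reading is that the existence of smooth partitions of unity subordinate to arbitrary open covers is the decisive ingredient, and your \v{C}ech--de~Rham outline is the natural way to make that precise. The first half of your plan is sound and matches the paper's intent: the partition of unity makes the sheaves of diffeological forms fine, so the \v{C}ech direction of the double complex is acyclic and the total cohomology computes $H^{*}_{dR}(X)$.

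The genuine gap is in your construction of the good cover, and it is not a technicality. For a general fat CW complex the attaching maps $h_{n}:\coprod_{j}\mathbb{S}^{n-1,m_{n}(j)}\to X^{(n-1)}$ are arbitrary smooth maps --- they are inductions onto open sets only in the \emph{regular} case --- so for a member $U$ of a good cover of $X^{(n-1)}$ the preimage $h_{n}^{-1}(U)$ can be an arbitrarily complicated open subset of $\mathbb{S}^{n-1,m_{n}(j)}$, and the set obtained in the pushout $X^{(n)}$ by thickening $U$ across the attaching region need not be contractible, nor need its multiple intersections be; this is the same obstruction that prevents strict good covers on general topological CW complexes. A route that avoids the good cover entirely is to argue sheaf-theoretically: fineness (from Theorem~\ref{thm:main-partition}) plus a \emph{local} Poincar\'e lemma exhibits $\Omega^{*}$ as a fine resolution of the constant sheaf $\underline{\real}$, whence $H^{*}_{dR}(X)\cong H^{*}(X;\real)$ by Proposition~\ref{prop:paracompact}. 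But even then the local exactness at points of $X^{(n-1)}$ lying in the image of $h_{n}$ requires an argument --- a small neighbourhood of such a point in $X^{(n)}$ is a pushout of a neighbourhood in $X^{(n-1)}$ with a collar piece of $\mathbb{D}^{n,m_{n}(j)}$, and its smooth contractibility (or at least the eventual exactness of closed forms on a cofinal system of such neighbourhoods) is exactly the step that neither your proposal nor the paper supplies. Identifying and proving that local statement is where the remaining work lies; the colimit/$\varprojlim^{1}$ step you mention is comparatively routine once the finite skeleta are handled.
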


Let $X_{0}^{(n-1)}$ $=$ $X^{(n)} \smallsetminus \hat{h}_{n}(\underset{j \in J_{n}}\coprod(\mathbb{D}^{n,m_{n}(j)} \smallsetminus \mathbb{S}_{0}^{n-1,m_{n}(j)}))$.
Since $\mathbb{D}^{n,m_{n}(j)} \smallsetminus \mathbb{S}_{0}^{n-1,m_{n}(j)}$ is compact in $\mathbb{D}^{n,m_{n}(j)}$, so is $\hat{h}_{n}(\underset{j \in J_{n}}\coprod(\mathbb{D}^{n,m_{n}(j)} \smallsetminus \mathbb{S}_{0}^{n-1,m_{n}(j)}))$ in $X^{(n)}$, and hence $X_{0}^{(n-1)}$ is $D$-open in $X^{(n)}$.
Since $X_{0}^{(n-1)}$ $=$ $X^{(n-1)} \smallsetminus h_{n}(\underset{j \in J_{n}}\coprod\partial_{0}\mathbb{S}^{n-1,m_{n}(j)}) \subset X^{(n-1)}$, $X_{0}^{(n-1)}$ is $D$-open in $X^{(n-1)}$, too.

Following \cite{MR4414309}, we say that a diffeological space $X$ has enough many smooth functions, if the $D$-topology of $X$ has an open base of the form $\pi^{-1}(0,1)$, where $\pi$ is a smooth function on $X$.
For example, by J.~Watts \cite{WattsThesis} and P.~I-Zemmour \cite{MR3025051}, a smooth manifold has enough many smooth functions.

\begin{thm}\label{thm:main-emsf}
A fat CW complex has enough many smooth functions.
\end{thm}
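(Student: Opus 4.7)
The plan is to deduce the theorem directly from the partition-of-unity result (Theorem~\ref{thm:main-partition}) and the Hausdorff property established in Proposition~\ref{prop:paracompact}. The goal is, given an arbitrary $D$-open set $U \subset X$ and a point $x \in U$, to produce a smooth function $\pi : X \to \real$ with $x \in \pi^{-1}((0,1)) \subset U$; this says exactly that the prescribed sets form an open base of the $D$-topology.

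First, I would form the two-element open cover $\mathcal{V} = \{U,\, X \smallsetminus \{x\}\}$ of $X$. The set $X \smallsetminus \{x\}$ is $D$-open because, by Proposition~\ref{prop:paracompact}, the $D$-topology of $X$ is Hausdorff, so singletons are closed. Applying Theorem~\ref{thm:main-partition} to the cover $\mathcal{V}$ yields a smooth partition of unity $\{\rho,\rho'\}$ subordinate to $\mathcal{V}$, with $\supp{\rho} \subset U$, $\supp{\rho'} \subset X \smallsetminus \{x\}$, and $\rho + \rho' = 1$. Since $\supp{\rho'} \subset X \smallsetminus \{x\}$ forces $\rho'(x) = 0$, the relation $\rho + \rho' = 1$ gives $\rho(x) = 1$.

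Finally, I would set $\pi = \rho/2 : X \to \real$. Then $\pi$ is smooth, $\pi(x) = \fracinline1/2 \in (0,1)$, and because $\rho$ takes values in $[0,1]$ the preimage $\pi^{-1}((0,1))$ equals $\{\,y \in X \mid \rho(y) > 0\,\} \subset \supp{\rho} \subset U$. Hence $x \in \pi^{-1}((0,1)) \subset U$, and since $x$ and $U$ were arbitrary the sets $\pi^{-1}((0,1))$ for smooth $\pi : X \to \real$ form a base of the $D$-topology.

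I do not foresee a genuine obstacle in this route: the technical content has already been packaged into Theorem~\ref{thm:main-partition}, and the only careful point is verifying that $X \smallsetminus \{x\}$ is $D$-open, which is immediate from the Hausdorff conclusion of Proposition~\ref{prop:paracompact}. An alternative proof could try to build $\pi$ cell by cell, using the decomposition $X^{(n)} = (\hat{h}_n(\coprod \mathbb{D}^{n,m_n(j)})) \cup X^{(n-1)}$ and the subset $X_0^{(n-1)}$ introduced just before the statement, but this only reshuffles the same ingredients and seems less economical than the two-function partition-of-unity argument above.
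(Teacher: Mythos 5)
Your argument is correct and is essentially the paper's own proof: both form the two-element open cover $\{U,\,X\smallsetminus\{x\}\}$ (using the Hausdorff conclusion of Proposition~\ref{prop:paracompact}) and apply Theorem~\ref{thm:main-partition} to extract the desired bump function. Your rescaling $\pi=\rho/2$ is in fact a small improvement, since the paper's choice $f=\rho_{U}$ satisfies $f(\mathbb{a})=1\notin(0,1)$ and so does not literally place $\mathbb{a}$ in $f^{-1}(0,1)$ as the definition of ``enough many smooth functions'' requires.
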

\begin{proof}
Let $X$ be a fat CW complex and $U$ be an open neighbourhood of an element $\mathbb{a} \in X$.
Then, since $X$ is Hausdorff, $V = X \smallsetminus \{\mathbb{a}\}$ is open, and hence $\mathbb{U} = \{U,V\}$ is an open covering of $X$.
Hence, by Theorem \ref{thm:main-partition}, there is a smooth partition of unity $\{\rho_{U},\rho_{V}\}$ subordinate to $\mathbb{U}$.
Put $f=\rho_{U}$, and we have done.
\end{proof}

\begin{defn}
A parametrization $P : U \to X$ of a diffeological space $X$ is said to be a pre-plot if it satisfies the following condition.
\begin{itemize}
\item for any smooth function $f : X \to \real$, $f{\comp}P$ is smooth in the ordinary sense.
\end{itemize}
\end{defn}

\begin{prop}\label{prop:main-emsf}
Let $X$ be a fat CW complex, and $n \ge 0$.
If a parametrization $P : U \to X^{(n)}$ is a pre-plot, then $P$ is continuous w.r.t. $D$-topology.
\end{prop}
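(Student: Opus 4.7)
The plan is to reduce continuity of $P$ to the ``enough many smooth functions'' property of Theorem \ref{thm:main-emsf}, applied not to $X$ itself but to $X^{(n)}$. The first step is to observe that $X^{(n)}$ is itself a fat CW complex: one simply keeps the attaching maps $h_{0}, \ldots, h_{n}$ from the fat CW structure of $X$ and declares $J_{k} = \emptyset$ for $k > n$, so that $X^{(n)}$ becomes its own diffeological colimit in the sense of Definition \ref{defn:smoothCW}. Thus Theorem \ref{thm:main-emsf} applies directly to $X^{(n)}$, producing an open base for the $D$-topology of $X^{(n)}$ consisting of sets of the form $\pi^{-1}(0,1)$ with $\pi : X^{(n)} \to \real$ smooth.

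The second step is the routine continuity check on this base. To show $P$ is continuous with respect to the $D$-topology, it suffices to verify that $P^{-1}(B)$ is open in $U$ for every basic open set $B$. For $B = \pi^{-1}(0,1)$ we compute
\[
P^{-1}(B) \;=\; P^{-1}\bigl(\pi^{-1}(0,1)\bigr) \;=\; (\pi{\comp}P)^{-1}(0,1).
\]
By the pre-plot hypothesis applied to the smooth function $\pi : X^{(n)} \to \real$, the composition $\pi{\comp}P : U \to \real$ is smooth in the ordinary sense, hence continuous as a map of open subsets of Euclidean space. Therefore $(\pi{\comp}P)^{-1}(0,1)$ is open in $U$, and continuity of $P$ follows.

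There is essentially no obstacle here: the entire substance has been packed into Theorem \ref{thm:main-emsf}, which in turn rests on the partition of unity statement Theorem \ref{thm:main-partition}. The only minor point that deserves explicit mention is the verification that $X^{(n)}$ is a bona fide fat CW complex in its own right, which is immediate by truncating the skeletal filtration at level $n$. One subtlety worth flagging is that the pre-plot condition must be read with respect to smooth functions on the codomain $X^{(n)}$ (not on the ambient $X$); this is the natural reading of Definition of pre-plot when the parametrization lands in $X^{(n)}$, and is exactly the class of test functions supplied by Theorem \ref{thm:main-emsf}.
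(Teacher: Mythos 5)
Your proposal is correct and follows essentially the same route as the paper: both reduce the claim to Theorem \ref{thm:main-emsf} applied to $X^{(n)}$ (the paper leaves the truncation remark implicit) and then use the pre-plot hypothesis to make $\pi{\comp}P$ smooth, hence continuous. The only cosmetic difference is that you check openness of preimages on the open base $\pi^{-1}(0,1)$, whereas the paper argues pointwise with a bump function $f$ satisfying $f(P(\mathbb{a}))=1$ and $\supp{f} \subset O$; these are interchangeable.
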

\begin{proof}
It is sufficient to show that, for any open subset $O \subset X$, $P^{-1}(O)$ is open in $U$.
Assume $\mathbb{a} \in P^{-1}(O)$, and hence $P(\mathbb{a}) \in O$.
Then by Theorem \ref{thm:main-emsf}, there exists a smooth function $f : X^{(n)} \to \real$ such that $f{\comp}P(\mathbb{a})=1$ and $\supp{f} \subset O$.
Since $f{\comp}P$ is smooth, it is continuous and  $(f{\comp}P)^{-1}(0,\infty)$ is open in $U$ containing $\mathbb{a}$. In other words, $\mathbb{a}$ is an interior point of $P^{-1}(O)$.
Thus $P^{-1}(O)$ is open in $U$.
\end{proof}

\section{Regular Complex}

In this section, $X$ stands for a fat CW complex with skeleta $\{X^{(n)}\}$.

\begin{defn}
Let us assume that $X$ is equipped with attaching maps $h_{n}$, $n \ge 0$.
A fat CW complex $X$ is said to be \textit{regular}, if $h_{n}$ is an induction onto a $D$-open subset of $X^{(n-1)}$, $n \ge 0$.
In this case, we also say that $X$ is a regular CW complex.
\end{defn}

\begin{prop}\label{prop:hat_{h_{n}}}
If $X$ is regular, then, for each $n \!\ge\! 0$, the map $\hat{h}_{n} : \underset{j \in J_{n}}\coprod \mathbb{D}^{n,m_{n}(j)} \to X^{(n)}$ induced from the induction $h_{n}$ is also an induction 
onto a $D$-open subset of $X^{(n)}$.
\end{prop}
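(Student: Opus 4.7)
The plan is to verify, in order, (a) injectivity of $\hat{h}_n$, (b) $D$-openness of $\Img{\hat{h}_n}$ in $X^{(n)}$, and (c) that $\hat{h}_n$ is an induction onto that image. The main tools will be the pushout description from Definition \ref{defn:smoothCW} together with the regularity hypothesis that $h_n$ is an induction onto a $D$-open subset of $X^{(n-1)}$.

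For (a), the pushout already forces $\hat{h}_n$ to restrict to a bijection from $\underset{j \in J_{n}}\coprod(\mathbb{D}^{n,m_{n}(j)} \smallsetminus \mathbb{S}^{n-1,m_{n}(j)})$ onto $X^{(n)} \smallsetminus \hat{i}_{n}(X^{(n-1)})$, while on $\underset{j \in J_{n}}\coprod \mathbb{S}^{n-1,m_{n}(j)}$ it factors as $\hat{i}_{n}{\comp}h_{n}$, a composition of injective maps (the first by the pushout, the second by regularity); the open and boundary parts cannot collide since their $\hat{h}_n$-images lie in disjoint subsets of $X^{(n)}$. For (b), I would write
\[
X^{(n)} \smallsetminus \Img{\hat{h}_{n}} = \hat{i}_{n}(X^{(n-1)}) \smallsetminus \Img{\hat{h}_{n}} = \hat{i}_{n}\bigl(X^{(n-1)} \smallsetminus h_{n}(\underset{j \in J_{n}}\coprod \mathbb{S}^{n-1,m_{n}(j)})\bigr)
\]
using the pushout identifications. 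By regularity, $h_{n}(\underset{j \in J_{n}}\coprod \mathbb{S}^{n-1,m_{n}(j)})$ is $D$-open in $X^{(n-1)}$, so its complement is $D$-closed there; since $X^{(n-1)}$ is itself $D$-closed in $X^{(n)}$ (as recorded at the start of Section 5), that complement stays $D$-closed in $X^{(n)}$, whence $\Img{\hat{h}_{n}}$ is $D$-open in $X^{(n)}$.

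For (c), smoothness being automatic, the content is the converse: any parametrization $P : U \to \underset{j \in J_{n}}\coprod \mathbb{D}^{n,m_{n}(j)}$ with $\hat{h}_{n}{\comp}P$ a plot of $X^{(n)}$ should itself be a plot. Imitating the argument used earlier to show $\hat{i}_{n}$ is an induction, I would exploit the subduction $X^{(n-1)} \amalg \underset{j \in J_{n}}\coprod \mathbb{D}^{n,m_{n}(j)} \twoheadrightarrow X^{(n)}$ coming from the pushout to obtain an open cover $\{U_{\alpha}\}$ of $U$ such that each $\hat{h}_{n}{\comp}P|_{U_{\alpha}}$ lifts either to a plot $Q_{\alpha} : U_{\alpha} \to X^{(n-1)}$ through $\hat{i}_{n}$, or to a plot $Q_{\alpha} : U_{\alpha} \to \underset{j \in J_{n}}\coprod \mathbb{D}^{n,m_{n}(j)}$ through $\hat{h}_{n}$. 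In the latter case the injectivity from (a) yields $P|_{U_{\alpha}} = Q_{\alpha}$, already a plot. In the former, $\Img{Q_{\alpha}}$ is forced into $h_{n}(\underset{j \in J_{n}}\coprod \mathbb{S}^{n-1,m_{n}(j)})$, which is $D$-open in $X^{(n-1)}$ and on which $h_{n}$ is, by regularity, a diffeological isomorphism; hence $h_{n}^{-1}{\comp}Q_{\alpha}$ is a plot of $\underset{j \in J_{n}}\coprod \mathbb{S}^{n-1,m_{n}(j)}$, and injectivity together with the induction $i_{n}$ identify it with $P|_{U_{\alpha}}$ viewed inside $\underset{j \in J_{n}}\coprod \mathbb{D}^{n,m_{n}(j)}$. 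Axiom D3 then glues these local plots into $P$ globally.

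The hard part will be the lifting step in the first case of (c): this is precisely where \emph{regularity}, as opposed to mere inductiveness of $h_{n}$, is essential, since it is only under regularity that $h_{n}^{-1}$ is defined and smooth on the whole image of $Q_{\alpha}$. The injectivity from (a) is then what lets us identify the abstract lift with $P|_{U_{\alpha}}$ and close the argument.
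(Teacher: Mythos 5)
Your proposal is correct in substance, but it takes a noticeably different route from the paper on both halves of the statement, so a comparison is worthwhile. For the induction property, the paper simply asserts that ``since $h_{n}$ is an induction, the pushout $\hat{h}_{n}$ is naturally an induction,'' whereas you actually prove it: injectivity from the set-level pushout description, then the local-lifting argument through the subduction $X^{(n-1)} \amalg \underset{j \in J_{n}}\coprod \mathbb{D}^{n,m_{n}(j)} \twoheadrightarrow X^{(n)}$, with injectivity used to identify each local lift with $P|_{U_{\alpha}}$. This fills in a step the paper leaves implicit, using the same mechanism the paper deploys elsewhere (Proposition 5.2). One correction to your closing commentary: in the first case of (c) you only need that $h_{n}$ is an induction --- a plot of $X^{(n-1)}$ with image contained in $\Img{h_{n}}$ pulls back along $h_{n}$ by the very definition of induction --- so regularity (openness of the image) is not what makes that lifting step work; it is needed only for the openness part of the conclusion.

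For the $D$-openness of $\Img{\hat{h}_{n}}$, the paper argues plot-by-plot: for each plot $P$ of $X^{(n)}$ it checks that $P^{-1}(\Img{\hat{h}_{n}})$ is open, splitting into the same two local-lift cases. You instead compute the complement $X^{(n)} \smallsetminus \Img{\hat{h}_{n}} = \hat{i}_{n}(X^{(n-1)} \smallsetminus \Img{h_{n}})$ and argue it is $D$-closed. The identity is right, but the inference ``$D$-closed in $X^{(n-1)}$, and $X^{(n-1)}$ is $D$-closed in $X^{(n)}$, hence $D$-closed in $X^{(n)}$'' is not valid for an arbitrary induction: the $D$-topology of a diffeological subspace can be strictly finer than the topology it inherits from the ambient space, so a $D$-closed subset of $X^{(n-1)}$ need not be closed in $X^{(n)}$. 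Here the step is rescued because $T$ is a left adjoint, so $T(X^{(n)})$ is the topological pushout along the closed embedding $i_{n}$, and therefore $T(X^{(n-1)}) \hookrightarrow T(X^{(n)})$ is a closed topological embedding, i.e.\ the two topologies on $X^{(n-1)}$ agree. You should say this explicitly, or simply revert to the plot-wise check, which sidesteps the issue entirely. With that one point addressed, your argument is complete and, if anything, more careful than the paper's.
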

\begin{proof}
Since $h_{n}$ is an induction, the pushout $\hat{h}_{n}$ is naturally an induction.
So, we are left to show that $\Img{\hat{h}_{n}}$ is $D$-open.
Suppose $P : U \to X^{(n)}$ is a plot.
Then there is an open cover $\{U_{\alpha}\}$ of $U$ such that $P|_{U_{\alpha}}$ can be pulled back to a plot from $U_{\alpha}$ to either $X^{(n-1)}$ or $\mathbb{D}^{n,m_{n}(j)}$ for some $j \in J_{n}$, by the definition of $X^{(n)}$.
\par
In the first case, we may assume that $P|_{U_{\alpha}}$ is pulled back to a plot $P_{\alpha} : U_{\alpha} \to X^{(n-1)}$ as $P|_{U_{\alpha}} = \hat{i}_{n}{\comp}P_{\alpha}$.
Then $(P|_{U_{\alpha}})^{-1}(\Img{\hat{h}_{n}}) = P_{\alpha}^{-1}((\hat{i}_{n})^{-1}\Img{\hat{h}_{n}})) = P_{\alpha}^{-1}(\Img{\hat{h}_{n}} \cap X^{(n-1)}) = P_{\alpha}^{-1}(\Img{h_{n}}) \subset U_{\alpha}$. 
Since $\Img{h_{n}}$ is $D$-open, $P_{\alpha}^{-1}(\Img{h_{n}})$ is open in $U_{\alpha}$.
\par
In the second case, we may assume that $P|_{U_{\alpha}}$ is pulled back to a plot $P_{\alpha} : U_{\alpha} \to \mathbb{D}^{n,m_{n}(j)}$ for some $j \in J_{n}$ as $P|_{U_{\alpha}} = \hat{h}_{n}{\comp}P_{\alpha}$.
Thus, we have $P_{\alpha}^{-1}(\Img{\hat{h}_{n}}) = P_{\alpha}^{-1}(\mathbb{D}^{n,m_{n}(j)}) = U_{\alpha}$.
\par
In either case, $P|_{U_{\alpha}}(\Img{\hat{h}_{n}})$ is open in $U_{\alpha}$, and hence $\Img{\hat{h}_{n}})$ is $D$-open.
\end{proof}

If $X$ is regular, we can define its $D$-open subset $\mathring{X}$ of $X$ by induction as follows.
\begin{enumerate}
\item $\mathring{X}^{(-1)} = \emptyset$.
\item $\mathring{X}^{(n)} = \mathring{X}^{(n-1)} \cup \hat{h}_{n}(\underset{j \in J_{n}}\coprod \Int\mathbb{D}^{n,m_{n}(j)})$
\item $\mathring{X} = \bigcup_{n}\,\mathring{X}^{(n)}$
\end{enumerate}
Then we also have their boundary as $\partial{X} = X \smallsetminus \mathring{X}$ and $\partial{X}^{(n)} = X^{(n)} \smallsetminus \mathring{X}^{(n)}$, $n \ge 0$.

\section{Main Results}\label{sect:MainResult}

By J.~Watts \cite{WattsThesis}, a manifold with corners is a Fr\"olicher space, which can be embedded into \diffeology{} as a reflexive diffeological space.
Thus spheres and disks are all reflexive in \diffeology{}.
The following is our main result.

\begin{thm}\label{thm:main-ref}
A regular CW complex of finite dimension is reflexive.
\end{thm}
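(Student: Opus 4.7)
The plan is to induct on the finite dimension $n$, taking $X=X^{(n)}$. The base case $n=0$ reduces to the reflexivity of the disjoint union of manifolds with corners $X^{(0)}\cong\coprod_{j}D^{m_{0}(j)}$, which is reflexive by J.~Watts as already cited.

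For the inductive step, assume $X^{(n-1)}$ is reflexive and let $P:U\to X^{(n)}$ be a pre-plot; I want to conclude $P$ is a plot. By Proposition~6.5, $P$ is $D$-continuous. The key is the two-element $D$-open cover $X^{(n)}=V_{1}\cup V_{2}$ with $V_{1}:=\Img{\hat{h}_{n}}$ ($D$-open by regularity, Proposition~7.3) and $V_{2}:=X_{0}^{(n-1)}=X^{(n-1)}\smallsetminus h_{n}(\coprod_{j}\partial_{0}\mathbb{S}^{n-1,m_{n}(j)})\subset X^{(n-1)}$ ($D$-open in $X^{(n)}$ by the paragraph after Theorem~6.4); the equality $V_{1}\cup V_{2}=X^{(n)}$ follows from $V_{2}\supset X^{(n)}\smallsetminus V_{1}$. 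Setting $U_{i}:=P^{-1}(V_{i})$, $D$-continuity produces an open cover $\{U_{1},U_{2}\}$ of $U$, so by the Locality axiom (D3) it suffices to show each $P|_{U_{i}}$ is a plot.

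Both cases follow from the same bump-function construction. For each $u\in U_{i}$, Hausdorffness (Proposition~5.1) lets me apply Theorem~6.2 to the cover $\{V_{i},X^{(n)}\smallsetminus\{P(u)\}\}$ to obtain a smooth $\rho:X^{(n)}\to\real$ with $\rho\equiv 1$ on a neighbourhood of $P(u)$ and $\supp{\rho}\subset V_{i}$. On $U_{1}$: regularity makes $\hat{h}_{n}$ a diffeomorphism onto $V_{1}$, so $P|_{U_{1}}$ lifts uniquely to $\tilde{P}:U_{1}\to\coprod_{j}\mathbb{D}^{n,m_{n}(j)}$, whose target is a manifold with corners by Theorem~3.2 and so is reflexive; for each smooth $g$ on this target, $\tilde{g}:=\rho\cdot(g\comp\hat{h}_{n}^{-1})$ extended by zero is smooth on $X^{(n)}$, and $g\comp\tilde{P}=\tilde{g}\comp P$ on a neighbourhood of $u$, hence smooth by the pre-plot hypothesis; so $\tilde{P}$ is a pre-plot, therefore a plot, and $P|_{U_{1}}=\hat{h}_{n}\comp\tilde{P}$ is a plot. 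On $U_{2}$: the inclusions $V_{2}\hookrightarrow X^{(n-1)}\hookrightarrow X^{(n)}$ are inductions (Proposition~5.2 together with openness of $V_{2}$ in $X^{(n-1)}$), so by the inductive hypothesis it suffices to show $P|_{U_{2}}$ is a pre-plot into $X^{(n-1)}$; for each smooth $f:X^{(n-1)}\to\real$, $\tilde{f}:=\rho\cdot f|_{V_{2}}$ extended by zero is smooth on $X^{(n)}$, and $f\comp P|_{U_{2}}=\tilde{f}\comp P$ on a neighbourhood of $u$, hence smooth by the pre-plot hypothesis.

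The main obstacle I anticipate is securing the $D$-open separation $X^{(n)}=V_{1}\cup V_{2}$: openness of $V_{1}$ uses regularity in an essential way, while openness of $V_{2}$ in the enlarged space $X^{(n)}$ hinges on the carefully tuned compactness of $\mathbb{D}^{n,m}\smallsetminus\mathbb{S}_{0}^{n-1,m}$ inside each fat handle. Once this cover is in place, the Hausdorff partition-of-unity machinery of Theorems~6.2--6.3 furnishes the bump functions in both cases, and the reflexivity of manifolds with corners (Theorem~3.2) together with the inductive hypothesis on $X^{(n-1)}$ closes the argument.
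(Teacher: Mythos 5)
Your proof is correct and follows essentially the same route as the paper's: induction on $n$, $D$-continuity of pre-plots via enough smooth functions, the open cover of $X^{(n)}$ by the handle images and $X_{0}^{(n-1)}$, and reduction to the reflexivity of the manifolds with boundary $\mathbb{D}^{n,m_{n}(j)}$ and of $X^{(n-1)}$ by extending or cutting off smooth functions using regularity. The only cosmetic difference is that you run both cases through a uniform bump-function argument supported in the relevant $D$-open set, whereas the paper extends a smooth function on $X^{(n-1)}$ globally over the handles in the first case and builds the zero-extension in two stages (first over $\coprod_{j}\mathbb{D}^{n,m_{n}(j)}$, then over $X^{(n-1)}$ via the pushout) in the second.
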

\begin{proof}
Let $X$ be a fat CW complex satisfying $X=X^{(n)}$, for some $n \!\ge\! 0$.
We show the statement by induction on $n \!\ge\! 0$.
In the case when $n=0$, we have nothing to do.
In the case when $n>0$, we may assume that we have done up to dimension $n-1$, and assume that $P : U \to X$ is a parametrization satisfying that, for every smooth function $f : X \to \real$, $f{\comp}P : U \to \real$ is a smooth function.
Since $X = X^{(n)}$, an open covering of $X$ is given by $\mathbb{D}^{n,m_{n}(j)}$ and $X^{n} \smallsetminus h_{n}(\underset{j \in J_{n}}{\textstyle\coprod}\mathbb{D}^{n,m_{n}(j)} \smallsetminus \mathbb{S}_{0}^{n-1,m_{n}(j)}) = X^{(n-1)} \smallsetminus h_{n}(\underset{j \in J_{n}}{\textstyle\coprod}\partial_{0}\mathbb{S}^{n-1,m_{n}(j)})$.
Then by Proposition \ref{prop:main-emsf}, we obtain that there is an open covering $\mathcal{U}=\{U_{\alpha}\}$ of $U$ such that $P_{\alpha}=P|_{U_{\alpha}} : U_{\alpha} \to X$ goes through either $X^{(n-1)}$ or $\mathbb{D}^{n,m_{n}(j)}$ for some $j \!\in\! J_{n}$.
\par
In case when $P_{\alpha}$ can be described as a composition $\hat{i}_{n}{\comp}P'_{\alpha} : U_{\alpha} \xrightarrow{P'_{\alpha}} X^{(n-1)} \xrightarrow{\hat{i}_{n}} X^{(n)}$, we have that $P'_{\alpha} : U_{\alpha} \rightarrow X^{(n-1)}$ is smooth:
for a given smooth function $f : X^{(n-1)} \to \real$, the composition $f{\comp}h_{n} : {\underset{j \in J_{n}}{\textstyle\coprod} \mathbb{S}^{n-1,m_{n}(j)}} \to \real$ is also a smooth function.
For each $j \!\in\! J_{n}$, $f_{\!j}=f{\comp}h_{n}|_{\mathbb{S}^{n-1,m_{n}(j)}} : \mathbb{S}^{n-1,m_{n}(j)} \to \real$ can be smoothly extendable on $\mathbb{D}^{n,m_{n}(j)}$, since $\mathbb{S}^{n-1,m_{n}(j)}$ is a closed subset of $\mathbb{D}^{n,m_{n}(j)}$ as a smooth function $\hat{f}_{\!j} : \mathbb{D}^{n,m_{n}(j)} \to \real$.
Since $\hat{f}_{\!j}$ and $f$ coincide with each other on $\mathbb{S}^{n-1,m_{n}(j)}$, they define a smooth function $\hat{f} : X^{(n)} \to \real$ whose restriction to $X^{(n-1)}$ is $f$.
Thus $\hat{f}{\comp}P_{\alpha}=f{\comp}P'_{\alpha} : U_{\alpha} \to \real$ is smooth.
Since $X^{(n-1)}$ is reflexive, $P_{\alpha} : U_{\alpha} \to X^{(n-1)}$ is a plot.
\par
In case when $P_{\alpha}$ can be described as a composition $\hat{h}_{n}{\comp}P'_{\alpha} : U_{\alpha} \xrightarrow{P'_{\alpha}} \mathbb{D}^{n,m_{n}(j_{0})} \xrightarrow{\hat{h}_{n}} X^{(n)}$ for some $j_{0} \!\in\! J_{n}$, we have that $P'_{\alpha} : U_{\alpha} \rightarrow \mathbb{D}^{n,m_{n}(j_{0})}$ is smooth:
for any $\mathbb{x} \in \mathbb{D}^{n,m_{n}(j_{0})}$, there is an open neighbourhood $O \subset \mathbb{D}^{n,m_{n}(j_{0})}$ of $\mathbb{x}$.
We choose a smaller open neighbourhood $V$ and $W$ of $\mathbb{x}$ such that $\mathbb{x} \in W \subset \Cl{W} \subset V \subset \Cl{V} \subset O$ where $\Cl{V}$ is a compact subset.
Let $W'=(P'_{\alpha})^{-1}(W)$ so that $\mathbb{x} \in P'_{\alpha}(W')$.
For any smooth function $f : O \to \real$, there is a smooth function $f' : O \to \real$ such that $f'|_{W}=f|_{W}$ and $\supp{f'} \subset \Cl{V}$.
Then we get a smooth function $\hat{f}' : \underset{j \in J_{n}}{\textstyle\coprod} \mathbb{D}^{n,m_{n}(j)} \to \real$ as its zero extension:%
\par\vskip1ex\noindent\hfil$\displaystyle
\hat{f}'(\mathbb{x}) = \begin{cases}
\,f'(\mathbb{x}),& \mathbb{x} \in O,
\\[.5ex]
\,0,& \mathbb{x} \not\in \supp f'.
\end{cases}
$\hfil\par\vskip1ex\noindent
Then $\hat{f}'{\comp}i_{n} : \underset{j \in J_{n}}{\textstyle\coprod} \mathbb{S}^{n-1,m_{n}(j)} \to \real$ has also a compact support which is closed in the open subset $\Img{h_{n}}$ in $X^{(n-1)}$, since $X$ is regular.
Hence $\hat{f}'{\comp}i_{n}$ has its zero-extension $\hat{f}_{0}$ on the entire $X^{(n-1)}$ so as to satisfy $\hat{f}'{\comp}i_{n} = \hat{f}_{0}{\comp}h_{n}$.
Thus smooth functions $\hat{f}'$ and $\hat{f}_{0}$ defines a smooth function $\hat{f} : X^{(n)} \to \real$ such that $\hat{f}{\comp}\hat{h}_{n}|_{W} = f$.
Thus $f{\comp}P'_{\alpha}|_{W'} = \hat{f}{\comp}\hat{h}_{n}|_{W}{\comp}P'_{\alpha}|_{W'}=\hat{f}{\comp}P_{\alpha}|_{W'}$ is smooth by the hypothesis, and hence $P'_{\alpha}|_{W'}$ is a plot.
Since $\mathbb{x} \in \mathbb{D}^{n,m_{n}(j_{0})}$ can be chosen arbitrary, $P'_{\alpha}$ is smooth, and so is $P_{\alpha}$.
Thus $P$ is a plot.
\end{proof}

In the above theorem, the finiteness condition on dimensions is essential, for the reflexivity is not preserved under taking colimits.

Now, let us recall \cite[Example 6.5]{MR4712607}.

\begin{expl}
The thin CW complex $\I$ is not reflexive.
\end{expl}

Let $n>0$, and let $\TD^{n} = \real^{n}/\pi^{n}_{set}$, where $\pi^{n}_{set} : \real^{n} \to \real^{n}$ is defined by 
\par\vskip1ex\noindent\hfil$\displaystyle
\pi^{n}_{set}(\mathbb{v}) = \left\{\,\begin{array}{ll}\mathbb{v}, \ & \ \Vert{\mathbb{v}}\Vert\le1\\[.5ex](\fracinline1/{\Vert{\mathbb{v}}\Vert}){\cdot}\mathbb{v}, \ & \ \Vert{\mathbb{v}}\Vert\ge1\end{array}\,\right\} \in D^{n} \subset \real^{n},
$\hfil\par\vskip1ex\noindent
which is a thin CW complex with one $0$-cell and one $n$-cell.
Then $\TD^{n}$ is topologically the same as an $n$-sphere $S^{n}$.
Similarly to \cite[Example 6.5]{MR4712607}, we obtain the following.

\begin{prop}\label{prop:main2}
The thin CW complex $\TD^{n}$ is not reflexive.
\end{prop}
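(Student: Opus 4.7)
The plan is to mimic the proof that $\I$ is not reflexive (Example 6.5 of \cite{MR4712607}) by producing a parametrization $P : (-1,1) \to \TD^n$ that is a pre-plot but not a plot.

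First I would pin down the smooth functions on $\TD^n$. Under the quotient diffeology on $\TD^n = \real^n/\pi^n_{set}$, a smooth $f : \TD^n \to \real$ corresponds to a smooth $\tilde f : \real^n \to \real$ that is constant on the fibres of $\pi^n_{set}$. Since the fibre over $p \in S^{n-1}$ is the ray $\{rp : r \ge 1\}$, this forces $\tilde f(rp) = \tilde f(p)$ for every $r \ge 1$. Fixing $p$, the radial trace $g_p(r) := \tilde f(rp)$ is then smooth on $\real$ and constant on $[1,\infty)$; all right-hand derivatives at $r = 1$ vanish, and $C^\infty$-smoothness of $g_p$ then forces all two-sided derivatives $g_p^{(k)}(1)$, $k \ge 1$, to vanish. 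In other words, $g_p$ is $C^\infty$-flat at $r = 1$.

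Next I would fix a unit vector $p \in S^{n-1}$ and define $P(t) = [(1-|t|)p] \in \TD^n$ for $t \in (-1,1)$, where brackets denote the class in $\TD^n$. For $0 < |t| < 1$ the point $(1-|t|)p$ lies in $\Int D^n$, whose fibres under $\pi^n_{set}$ are singletons, so any smooth local lift $Q : (-\epsilon,\epsilon) \to \real^n$ of $P$ must satisfy $Q(t) = (1-|t|)p$ for $t \ne 0$, and by continuity also at $t = 0$. Since $t \mapsto (1-|t|)p$ is not differentiable at $0$, no such smooth $Q$ exists, so $P$ is not a plot. On the other hand, for every smooth $f : \TD^n \to \real$, $f \comp P(t) = g_p(1-|t|)$; this is smooth away from $0$, and the flatness of $g_p$ at $1$ gives $g_p(1+s) - g_p(1) = o(|s|^N)$ for every $N$ as $s \to 0$, so an inductive argument on difference quotients shows $f \comp P$ is $C^\infty$ at $t = 0$ with all derivatives equal to $0$ there. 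Hence $P$ is a pre-plot but not a plot, which contradicts reflexivity.

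The main delicate point will be securing the two-sided $C^\infty$-flatness of $g_p$ at $r = 1$ rather than merely its right-sided vanishing; once that is in place, the failure of $P$ to lift smoothly together with the flat behaviour of $f \comp P$ at $t = 0$ immediately finishes the argument, mirroring the $\I$ case with $S^{n-1} \subset \TD^n$ playing the role of the endpoints $0, 1 \in \I$.
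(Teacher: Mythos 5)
Your proof is correct and follows the same overall strategy as the paper's --- produce an explicit parametrization of $\TD^{n}$ that is a pre-plot but not a plot, exploiting the fact that every smooth function on $\TD^{n}$ has a radial trace $g_{p}(r)=\tilde f(rp)$ that is constant on $[1,\infty)$ and hence $C^{\infty}$-flat at $r=1$ --- but with a different test curve, and the difference matters. The paper runs the one-sided curve $t\mapsto[(1-\sqrt{\max\{0,t\}})\,\mathbb{e}]$ into the disk, so the forced lift fails to be smooth because its derivative blows up as $t\to+0$; the cost is that the pre-plot verification must control all derivatives of $\psi(\sqrt{t})$, which the paper handles with the expansion $\sum_{j}P_{r,j}(1/\sqrt{t})\,\psi^{(j)}(\sqrt{t})$ and repeated applications of L'H\^opital's rule. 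Your two-sided curve $t\mapsto[(1-|t|)p]$ has a corner instead of an infinite derivative, so the non-smoothness of the forced lift is immediate, and the pre-plot check reduces to gluing the two smooth branches $g_{p}(1\mp t)$ across the flat point $r=1$, which needs only the standard fact that one-sided smooth pieces with matching derivative limits of every order assemble to a $C^{\infty}$ function. Your route is therefore genuinely more elementary at the analytic step, while proving the same statement. One point to make explicit in a full write-up: since your $P$ is not locally constant near $t=0$, the definition of the pushforward (quotient) diffeology really does force a local smooth lift through $\pi^{n}$ on a neighbourhood of $0$, and that is exactly what your fibre analysis rules out; this is the same implicit step the paper takes when it writes $f=\pi_{n}{\comp}\phi$ near $t=0$.
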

\begin{Proof}
By definition, there is a canonical subduction $\pi^{n} : \real^{n} \twoheadrightarrow \real^{n}/\pi^{n}_{set}=\TD^{n}$. %
Let $\phi_{0} : (-1,1) \to \real$, $\psi_{0} : \real \to \real^{n}$ 
and $f : (-1,1) \to \TD^{n}$ 
be maps defined by
\par\vskip1ex\noindent\hfil$\displaystyle
f=\pi_{n}{\comp}\psi_{0}{\comp}\phi_{0},\qquad
\phi_{0}(t)=\textstyle\sqrt{\max\,\{\,0 \,, t\,\}},\qquad
\psi_{0}(t)=(1\!-\!t){\cdot}\mathbb{e},
$\hfil\par\vskip1ex\noindent
for a fixed vector $\mathbb{e}$ $=$ ${}^{t}(1,0,\dots,0) \in S^{n} \subset D^{n} \subset \real^{n}$.
Then we see that $f$ is not smooth at $t=0$.
In fact, if $f$ is smooth, $f$ can be expressed as $f$ $=$ $\pi_{n}{\comp}\phi$ near $t=0$ by a smooth map $\phi : \real \to \real^{n}$.
Then we have $\phi(t)$ $=$ $(1\!-\!\sqrt{t}){\cdot}\mathbb{e}$ for $t>0$, and hence $\phi'(0)$ $=$ $\underset{t \to -0}\lim \phi'(t)$ $=$ $(-\infty,0,\dots,0)$.
It contradicts to the smoothness of $\phi$ at $t=0$.
On the other hand, for any smooth function $g : \TD^{n} \to \real$, the composition $\psi$ $=$ $g{\comp}\pi_{n}{\comp}\psi_{0} : \real \to \real$ is also smooth on $\real$, and is constant on $(-\infty,0]$, as well.
Thus we have 
\par\vskip1ex\noindent\hfil$\displaystyle
\underset{t \to +0}\lim\,\psi^{(r)}(t)=\psi^{(r)}(0)=\underset{t \to -0}\lim\,\psi^{(r)}(t)=0\quad\text{for all \ \,$r \!\ge\! 1$.}
$\hfil\par\vskip1ex\noindent
By applying L'H\^opital's rule many times, one obtains that $\underset{t \to +0}\lim\,\psi^{(r)}(t)/t^{n}=0$ for all $r, \,n \ge 1$.
Then by induction, one can express $(\psi{\comp}f)^{(r)}(t)$ as the following form:
\par\vskip1ex\noindent\hfil$\displaystyle
(\psi{\comp}f)^{(r)}(t) = \underset{j=0}{\overset{r}{\sum}}\,P_{\!r,j}(1/\sqrt{t}){\cdot}\psi^{(j)}(\sqrt{t}), \,t>0,\quad\text{for all $r > 1$,}
$\hfil\par\vskip1ex\noindent
where $P_{\!r,j}(x)$ is a polynomial on $x$.
Again by applying L'H\^opital's rule, one obtains that $(\psi{\comp}f)^{(r)}(0)$ exists and equals to $\underset{t \to 0}\lim\,(\psi{\comp}f)^{(r)}(t)=0$ for all $r \!\ge\! 1$, and hence $\psi{\comp}f$ is smooth at $t\!=\!0$.
Thus $f \in \mathcal{D}'(\TD^{n})$ while $f \not\in \mathcal{D}(\TD^{n})$, where we denote $\mathcal{D}'(X)=\{\,P \in \mathcal{N}(X) \mid \text{$g{\comp}P$ is smooth for any smooth function $g : X \to \real$} \,\} \supset \mathcal{D}(X)$.
So, $\TD^{n}$ is not reflexive (outside $\Int{D^{n}} \subset \TD^{n}$).
In contrast, $\TD^{n}$ is reflexive at any point in $\Int{D^{n}}$.
\end{Proof}

\begin{cor}\label{cor:main2}
A thin CW complex of positive dimension is not reflexive.
\end{cor}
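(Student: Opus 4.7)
The plan is to adapt the argument from the proof of Proposition \ref{prop:main2} for $\TD^{n}$ to a general thin CW complex. The crucial input there was that the quotient $\pi_{n}$ collapses each radial ray outside $D^{n}$ to a point, which forces the smooth function $s \mapsto g(\pi_{n}((1{-}s)\mathbb{e}))$ to be constant on $(-\infty,0]$ and in particular flat at $s = 0$, for every smooth $g$. I will produce the same flatness by choosing an attaching map of $X$ whose target is discrete.

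Let $n$ be the smallest positive integer with $J_{n} \neq \emptyset$; this exists by the hypothesis on dimension, and by minimality $X^{(k)} = X^{(0)}$ for $0 \le k \le n{-}1$, so $X^{(n-1)}$ is discrete. Fix any $j_{0} \in J_{n}$, so the attaching map $h_{n}^{j_{0}} : \mathbb{S}^{n-1} \to X^{(n-1)}$ is smooth into a discrete space and is therefore constant on each connected component of $\mathbb{S}^{n-1}$ (on all of $\mathbb{S}^{n-1}$ when $n \ge 2$, and on each of the two rays $(-\infty,-1]$ and $[1,\infty)$ when $n = 1$). Pick a unit vector $\mathbb{e} \in S^{n-1}$ and let $p = h_{n}^{j_{0}}(\mathbb{e})$ be the common value on the component containing the ray $\{r\mathbb{e} : r \ge 1\}$.

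Define $f : \real \to X$ by $f(t) = \hat{h}_{n}^{j_{0}}\bigl((1 - \sqrt{\max\{0, t\}})\,\mathbb{e}\bigr)$, so that $f(t) = p$ for every $t \le 0$ while $f(t)$ lies in the interior of the $j_{0}$-th $n$-cell for small $t > 0$. To see $f$ is not a plot, I consider a hypothetical smooth local lift near $t = 0$ through the pushout subduction $X^{(n-1)} \amalg \coprod_{j \in J_{n}} \mathbb{D}^{n,0} \twoheadrightarrow X^{(n)}$: since the interiors of distinct $n$-cells and the $(n{-}1)$-skeleton are pairwise disjoint in $X^{(n)}$, the lift must factor through $\hat{h}_{n}^{j_{0}}$, and injectivity on $\Int{D^{n}}$ forces it to equal $t \mapsto (1{-}\sqrt{t})\mathbb{e}$, whose derivative blows up at $0$. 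To see $f$ is a pre-plot, I fix any smooth $g : X \to \real$ and set $\Psi(s) = g(\hat{h}_{n}^{j_{0}}((1{-}s)\mathbb{e}))$, which is smooth on $\real$ as a composition of smooth maps; for $s \le 0$ the point $(1{-}s)\mathbb{e}$ lies on the distinguished ray inside $\mathbb{S}^{n-1}$, so $\hat{h}_{n}^{j_{0}}((1{-}s)\mathbb{e}) = h_{n}^{j_{0}}((1{-}s)\mathbb{e}) = p$, and hence $\Psi$ is constant on $(-\infty,0]$ and in particular flat at $s = 0$. The L'H\^opital argument from the proof of Proposition \ref{prop:main2} then yields that $g{\comp}f(t) = \Psi(\sqrt{\max\{0, t\}})$ is smooth at $t = 0$, and so $f$ is a pre-plot of $X$.

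Hence $f$ is a non-plot pre-plot witnessing the non-reflexivity of $X$. The main obstacle is the choice of $n$: one must descend to the smallest positive dimension with an attached cell so that $X^{(n-1)}$ is discrete, since for an $n$-cell attached to a non-discrete subskeleton the analogue of $\Psi$ would in general fail to be constant on $s \le 0$, and the pre-plot property would break.
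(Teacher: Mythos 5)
Your proposal is correct and takes essentially the same route as the paper, which derives the corollary directly from the argument of Proposition \ref{prop:main2}: your curve $t\mapsto\hat{h}_{n}^{j_{0}}\bigl((1-\sqrt{\max\{0,t\}})\,\mathbb{e}\bigr)$ plays exactly the role of $\pi_{n}{\comp}\psi_{0}{\comp}\phi_{0}$ there, with the forced lift giving non-smoothness and the flatness of $\Psi$ at $0$ giving the pre-plot property via the same L'H\^opital computation. The one detail you add beyond the paper --- descending to the lowest positive-dimensional cell so that the attaching map lands in the discrete $0$-skeleton and is therefore constant along the chosen ray --- is precisely what is needed to make the transplanted argument work for an arbitrary thin CW complex.
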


\begin{thm}\label{thm:main}
A closed manifold is a regular CW complex.
\end{thm}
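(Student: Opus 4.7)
The strategy is to lift a Morse-theoretic handle decomposition of the closed $n$-manifold $M$ to a fat CW structure, by replacing each standard $k$-handle $D^{k} \times D^{n-k}$ with a fat handle $\mathbb{D}^{k,n-k}$ whose attaching region $\mathbb{S}^{k-1,n-k}$ is a collar-thickening of the usual attaching sphere. Concretely, I would fix a self-indexing Morse function $f \colon M \to \real$ and the associated filtration $\emptyset = M_{-1} \subset M_{0} \subset \cdots \subset M_{n} = M$ by compact $n$-submanifolds with boundary, in which $M_{k}$ is obtained from $M_{k-1}$ by simultaneously attaching a finite family of $k$-handles along disjoint smooth embeddings $\phi^{k}_{j} \colon S^{k-1} \times D^{n-k} \hookrightarrow \partial M_{k-1}$; then set $X^{(k)} := M_{k}$, assigning one fat cell $\mathbb{D}^{k,n-k}$ per critical point of index $k$.

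The fat attaching maps are built as follows. For each $k$, fix a smooth collar $c \colon \partial M_{k-1} \times [0,1) \hookrightarrow M_{k-1}$. Using Remark~\ref{rem:sphere} to identify $\mathbb{S}^{k-1,n-k} \homeo S^{k-1} \times [1,\infty) \times \real^{n-k}$, define
\[
\tilde h^{k}_{j}(\omega, t, \mathbb{v}) \;=\; c\bigl(\phi^{k}_{j}(\omega,\,\tau(\mathbb{v})),\ \rho(t-1)\bigr),
\]
where $\tau \colon \real^{n-k} \to \Int D^{n-k}$ is a fixed diffeomorphism and $\rho \colon [0,\infty) \to [0,1)$ is a smooth increasing reparametrisation with $\rho(0)=0$. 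Precomposing with $\widehat\Phi_{k,n-k}^{-1}|_{\mathbb{S}^{k-1,n-k}}$ from Theorem~\ref{thm:smoothtypeofhandle} (which matches the fat attaching region with the standard collar $\mathbb{S}^{k-1} \times D^{n-k}$) yields attaching maps $h^{k}_{j} \colon \mathbb{S}^{k-1,n-k} \to X^{(k-1)}$ that are smooth open embeddings onto tubular neighbourhoods of the attaching spheres. Disjointness of the $\phi^{k}_{j}$'s then makes $h_{k} = \coprod_{j} h^{k}_{j}$ an induction onto a $D$-open subset of $X^{(k-1)}$, which is exactly the regularity condition.

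It remains to check that the pushout of $\mathbb{S}^{k-1,n-k} \hookrightarrow \mathbb{D}^{k,n-k}$ along $h^{k}_{j}$ reconstructs $M_{k}$ diffeologically. By Theorem~\ref{thm:smoothtypeofhandle}, $\mathbb{D}^{k,n-k}$ is identified with the standard handle $\real^{k} \times D^{n-k}$ via a smooth homeomorphism that is a diffeomorphism off the attaching sphere $S^{k-1} \times D^{n-k}$ and matches the fat attaching region with the collar. Under this identification the fat pushout realises the standard manifold gluing of $M_{k-1}$ with a $k$-handle along $\phi^{k}_{j}$, extended along the collar. Inducting on $k$ then yields $X^{(n)} \cong M$ in \diffeology{}, exhibiting $M$ as a regular fat CW complex.

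The hardest point will be this last diffeological identification, since $\widehat\Phi_{k,n-k}$ is only a smooth homeomorphism (not a diffeomorphism) along $S^{k-1} \times D^{n-k}$. One must argue that the non-diffeomorphic locus is exactly what is glued away by the pushout: on the manifold side the image points already lie inside the embedded collar and carry the manifold diffeology, so the universal property of the pushout in \diffeology{} forces the fat handle's diffeology there to coincide with the manifold one, absorbing any discrepancy.
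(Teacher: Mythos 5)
Your overall strategy --- a self-indexing Morse function, one fat handle $\mathbb{D}^{k,n-k}$ per index-$k$ critical point, and attaching maps sending $\mathbb{S}^{k-1,n-k}\homeo S^{k-1}\times[1,\infty)\times\real^{n-k}$ diffeomorphically onto open collar neighbourhoods so that regularity holds --- is the same as the paper's. But the step you yourself flag as ``the hardest point'' is where essentially all of the content of the theorem lies, and the resolution you propose does not work. The universal property of the pushout only produces a smooth map \emph{out of} $X^{(k)}$ into $M_{k}$; it cannot ``force'' the pushout diffeology to coincide with the manifold diffeology of $M_{k}$ near the seam $\phi^{k}_{j}(S^{k-1}\times S^{n-k-1})$ where the corner of the standard handle sits. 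That diffeological quotients and pushouts of manifolds routinely fail to carry the expected smooth structure is precisely the phenomenon exhibited in this paper by $\I$ and $\TD^{n}$, which are not even reflexive, so an argument is mandatory. What must be proved is that gluing the manifold-with-boundary $\mathbb{D}^{k,n-k}$ to $M_{k-1}$ along the codimension-$0$ piece $\mathbb{S}^{k-1,n-k}$ yields a smooth manifold, i.e.\ that $\partial\mathbb{D}^{k,n-k}$ and the rest of $\partial M_{k-1}$ fit together smoothly; this is a corner-smoothing statement depending on the precise $\phi$-controlled shape of $\partial\mathbb{D}^{k,n-k}$, not on formal properties of colimits. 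The paper's proof consists almost entirely of this verification: the auxiliary manifolds $N_{a,b}$, the explicit bending map $\Psi$ on $\partial{N_{0,1}}\times[-\fracinline1/4,\fracinline1/4]$, and the function $\kappa$ built from the very same $\phi$ that defines $\mathbb{D}^{n,m}$, arranged so that the resulting $N'$ is simultaneously a smoothing of $M\cup_{h}D^{n}\times D^{m}$ and the result of attaching $\mathbb{D}^{n,m}$ along $\mathbb{S}^{n-1,m}$.

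There is also a local error in your construction of the attaching maps: precomposing with $\widehat\Phi_{k,n-k}^{-1}|_{\mathbb{S}^{k-1,n-k}}$ is not permissible, because by Theorem \ref{thm:smoothtypeofhandle} and Appendix \ref{appendix:smoothtypeofhandle} the Jacobian of $\widehat\Phi_{k,n-k}$ vanishes along $S^{k-1}\times D^{n-k}$, so its inverse is continuous but not smooth along $\partial_{0}\mathbb{S}^{k-1,n-k}$, and the composite would fail to be a smooth attaching map there. Since your $\tilde h^{k}_{j}$ is already defined on $\mathbb{S}^{k-1,n-k}$ and is a diffeomorphism onto a $D$-open subset of $M_{k-1}$, you should use it directly and drop the precomposition; but even then the diffeological identification of the pushout with $M_{k}$ still requires the smoothing construction described above.
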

\begin{proof}
In view of the standard Morse theory for a closed manifold (see Tamura \cite{MR841270}), it is sufficient to show that a smoothing of $N = M \cup_{h} D^{n} \times D^{m}$ a manifold with boundary obtained by attaching a handle $D^{n} \times D^{m}$ on $M$, where $h : S^{n-1} \times D^{m} \hookrightarrow M$ is a smooth embedding in \manifold{}, is diffeomorphic to $N' = M \cup_{h'} \mathbb{D}^{n,m}$ a manifold obtained by attaching a smooth handle $\mathbb{D}^{n,m}$ on $M$, where $h' : \mathbb{S}^{n-1,m} \to M$ is a diffeological embedding in {\diffeology}, so that we obtain the following pushout diagram in {\diffeology}.
\par\vskip1ex\noindent\hfil$\displaystyle
\begin{tikzcd}[ampersand replacement=\&]
{\mathbb{S}^{n-1,m}}
\arrow[r,"h'"]
\arrow[d,hook,"i'"]
\&
{M}
\arrow[d,hook,"\hat{i}'"]
\\[1ex]
{\mathbb{D}^{n,m}}
\arrow[r,"\hat{h}'"]
\&
{N'.\!\!}
\end{tikzcd}
$\hfil\par\vskip1ex\noindent

First, we take a collar neighbourhood $\partial{M}\times(-\fracinline1/2,\fracinline1/2] \subset M$ of $\partial{M}=\partial{M}\times\{\fracinline1/2\}$ in $M$.
Then we have a submanifold $M_{a} = M \smallsetminus \{\, (\mathbb{x},t) \in \partial{M}\times(-\fracinline1/2,\fracinline1/2] \mid t \!>\! a \,\}$ of $M$, diffeomorphic to $M$, for $a \in [-\fracinline1/4,\fracinline1/2)$.
We smoothly extend $h : S^{n-1} \times D^{m} \to \partial{M}$ to a diffeological embedding $h_{3} : S^{n-1} \times D_{3}^{m} \to \partial{M}$, where we denote $D_{r}^{d} = \{\,\mathbb{v} \in \real^{d} \mid \Vert{\mathbb{v}}\Vert \le r\,\}$ for $d \in \ordinal$ and $r > 0$, which can be obtained by using a diffeomorphism $D_{3}^{m} \homeo D^{m}$ if necessary.
For $a \in [-\fracinline1/4,\fracinline1/2)$ and $b \in [1,3]$, there is also a diffeological embedding $h_{a,b} : \partial{D_{1-a}^{n}} \times D_{b}^{m} = S^{n-1} \times D_{b}^{m} \to \partial{M} = \partial{M_{a}}$ as an extension of $h$, obtained by restricting $h_{3}$ to $S^{n-1} \times D_{b}^{m}$.
Then by definition, $h_{1}=h$.
Let $N_{a,b} = M_{a} \cup_{h_{a}} D_{1-a}^{n} \times D_{b}^{m}$, which is diffeomorphic to $N$ for $a \in [-\fracinline1/4,\fracinline1/2)$ and $b \in [1,3]$.
In the case when $b \le 2$, we have the following open neighbourhood of $\Img{h_{b}}$ in $N_{a,b+1}$:
\par\vskip1ex\noindent\hfil$\displaystyle
h_{b+1}(S^{n-1} \times \Int{D_{b+1}^{m}}) \times (a{-}\fracinline1/4,a] \cup_{h_{b}} D_{1-a}^{n} \times D_{b}^{m}
$\hfil\par\vskip1ex\noindent
which is diffeomorphic to $S^{n-1} \times (a-\fracinline1/4,a] \times \Int{D_{b+1}^{m}} \cup D_{1-a}^{n} \times D_{b}^{m}$ $\homeo$ $\mathbb{S}_{a}^{n-1} \times \real^{m} \cup D_{1-a}^{n} \times D_{b}^{m}$ $\subset$ $\real^{n} \times \real^{m}$, where we denote $\mathbb{S}_{a}^{n-1} = \real^{n} \smallsetminus \Int{D_{1-a}^{n}}$.
Thus $N_{a,b}$ has a nebula consisting of three manifolds with boundary, $M_{0} \smallsetminus \Img{h_{a}}$, $\mathbb{S}_{a}^{n-1} \times \real^{m} \cup (D_{1-a}^{n} \smallsetminus D_{\fracinlines1/2-a}^{n}) \times D_{b}^{m}$ and $\Int{D_{1-a}^{n}} \times D_{b}^{m}$ where the latter two are open subsets of $\mathbb{S}^{n-1} \times \real^{m} \cup D_{1-a}^{n} \times D_{b}^{m}$.
Hence we have the following pushout diagram in {\diffeology}:
\par\vskip1ex\noindent\hfil$\displaystyle
\begin{tikzcd}[ampersand replacement=\&]
{\mathbb{S}_{a}^{n-1} \times \real^{m}}
\arrow[r,"h_{a}"]
\arrow[d,hook,"i_{a}"]
\&
{M_{a}}
\arrow[d,hook,"\hat{i}_{a}"]
\\[1ex]
{\mathbb{S}_{a}^{n-1} \times \real^{m} \cup D_{1-a}^{n} \times D_{b}^{m}}
\arrow[r,"\hat{h}_{a,b}"]
\&
{N_{a,b}.\!\!\!\!}
\end{tikzcd}
$\hfil\par\vskip1ex\noindent

Let us denote by $N_{b}=N_{-\fracinlines1/4,b}$.
Then we may regard $M_{a}$ and $N_{b}$ as submanifolds of $N_{a,b}$, and hence we obtain $N_{a,b} = M_{a} \cup N_{b}$. 

Second, we define a smooth function $\lambda_{\epsilon}$ by $\lambda_{\epsilon}(t) = \lambda(\frac{t-\epsilon}{1-2\epsilon})$ for a fixed small $\epsilon\!>\!0$ \,($\epsilon < \fracinline1/{22}$ so that $\frac1{1-2\epsilon}<\fracinline{11}/{10}$), and hence we obtain $\lambda'_{\epsilon}(t) =\frac1{1-2\epsilon}{\cdot}\lambda'(\frac{t-\epsilon}{1-2\epsilon})$.
Hence by Proposition \ref{prop:maximumlambda'}, $\lambda'_{\epsilon}(t)$ has the maximum value $\frac{\ell(\fracinlines3/2)^{2}}{(1-2\epsilon){\cdot}\alpha} < 2$ when $\frac{t-\epsilon}{1-2\epsilon}=\fracinline1/2$, i.e, $t=\fracinline1/2$.
Let $f, \,g : \real \times \real \to \real$ be smooth functions defined for $(t,x) \in \real^{2}$ as follows:
\par\vskip1ex\noindent\hfil$\displaystyle
\begin{array}{l}
f(t,x) = x + t{\cdot}\lambda_{\epsilon}(4\!-\!2x),
\\[1ex]
g(t,x) = x - t{\cdot}\lambda_{\epsilon}(2x).
\end{array}
$\hfil\par\vskip1ex\noindent
Then, if $|t| \le \fracinline1/4$, we have $\pder{}by{x}f(t,x) = 1 - 2t{\cdot}\lambda'_{\epsilon}(4\!-\!2x) > 1-4|t| \ge 0$ and $\pder{}by{x}g(t,x) = 1 - 2t{\cdot}\lambda'_{\epsilon}(2x) > 1-4|t| \ge 0$.
Thus both $f$ and $g$ are strictly increasing on $x \in \real$.

Let $\widehat{N} = \partial{N_{0,1}} \times [-\fracinline1/4,\fracinline1/4]$.
Then, following \cite{MR841270}, we define a smooth map $\Psi : \widehat{N} \to N_{\fracinline1/4,\fracinline5/4}$ for $t \in [-\fracinline1/4,\fracinline1/4]$, 
$\mathbb{x} \in \partial{M} \smallsetminus h_{2}(S^{n-1} \times D_{2}^{m})$, 
$\mathbb{y} = h_{2}(\mathbb{u},\mathbb{v}),  \,(\mathbb{u},\mathbb{v}) \in S^{n-1} \times (\Int{D_{2}^{m}} \smallsetminus D^{m})$ and 
$\mathbb{z} = (\mathbb{u}',\mathbb{v}') \in D^{n} \times S^{m-1}$ as follows:
\begin{align*}&
\Psi(\mathbb{x},t) = (\mathbb{x},t) \in \partial{M} \times [-\fracinline1/4,\fracinline1/4] \subset M_{\fracinlines1/4}, 
\\[1ex]&
\Psi(\mathbb{y},t) = \begin{cases}\,
((1{-}t){\cdot}\mathbb{u},\mathbb{v}) \in (D^{n} \smallsetminus \Int{D_{\fracinlines7/8}^{n}}) \times \Int{D_{2}^{m}} \subset M_{\fracinlines1/4},&\mathbb{v} \in \Int{D_{2}^{m}} \smallsetminus D_{\fracinlines{(4-\epsilon)}/2}^{m},
\\[1.5ex]\,
((1{-}t){\cdot}\mathbb{u},\frac{f(t,\Vert{\mathbb{v}}\Vert)}{\Vert{\mathbb{v}}\Vert}{\cdot}\mathbb{v}) \in (D^{n} \smallsetminus \Int{D_{\fracinlines7/8}^{n}}) \times \Int{D_{2}^{m}} \subset M_{\fracinlines1/4},&\mathbb{v} \in \Int{D_{2}^{m}} \smallsetminus D_{\fracinlines3/2}^{m},
\\[1.5ex]\,
((1{-}t){\cdot}\mathbb{u},\mathbb{v}+\frac{t}{\Vert{\mathbb{v}}\Vert}{\cdot}\mathbb{v}) \in (D^{n} \smallsetminus \Int{D_{\fracinlines7/8}^{n}}) \times \Int{D_{2}^{m}} \subset M_{\fracinlines1/4},&\mathbb{v} \in \Int{D_{\fracinlines{(3+\epsilon)}/2}^{m}} \smallsetminus D^{m},
\end{cases}
\\[2ex]&
\Psi(\mathbb{z},t) = \begin{cases}\,
(\mathbb{u}'-\frac{t}{\Vert{\mathbb{u}'}\Vert}{\cdot}\mathbb{u}',(1{+}t){\cdot}\mathbb{v}') \in D^{n} \times \Int{D_{\fracinlines5/4}^{m}} \subset N_{\fracinlines5/4},&\mathbb{u}' \in D^{n} \smallsetminus D_{\fracinlines(1-\epsilon)/2}^{n},
\\[1.5ex]\,
(\frac{g(t,\Vert{\mathbb{u}'}\Vert)}{\Vert{\mathbb{u}'}\Vert}{\cdot}\mathbb{u}',(1{+}t){\cdot}\mathbb{v}') \in D^{n} \times \Int{D_{\fracinlines5/4}^{m}} \subset N_{\fracinlines5/4},&\mathbb{u}' \in \Int{D_{\fracinlines1/2}^{n}} \smallsetminus \{\mathbb{0}\},
\\[1.5ex]\,
(\mathbb{u}',(1{+}t){\cdot}\mathbb{v}') \in D^{n} \times \Int{D_{\fracinlines5/4}^{m}} \subset N_{\fracinlines5/4},&\mathbb{u}' \in \Int{D_{\fracinlines\epsilon/2}^{n}}.
\end{cases}
\end{align*}
Clearly, $\Psi$ is a smooth monomorphism into $M_{\fracinlines1/4} \cup N_{\fracinlines5/4} = N_{\fracinlines1/4,\fracinlines5/4}$, and so we regard $\widehat{N}$ as a subspace of $N_{\fracinlines1/4,\fracinlines5/4}$ by giving a pullback diffeology on it from $\Psi$.

Finally, a smooth function $\kappa : \partial{N} = \partial{M} \smallsetminus \Img{h} \cup D^{n} \times S^{m-1} \to [-\fracinline1/4,\fracinline1/4]$ is defined by $\kappa|_{\partial{M}} : \partial{M} \rightarrow \{0\} \subset [-\fracinline1/4,\fracinline1/4]$ and, for $\mathbb{x} = h_{2}(\mathbb{u},\mathbb{v}) \in h_{2}(S^{n-1} \times D_{2}^{m}) \supset h(S^{n-1} \times D^{m})$,
\par\vskip1ex\noindent\hfil$\displaystyle
\kappa(\mathbb{x})=\begin{cases}\,
\phi(1{-}\Vert{\mathbb{v}}\Vert) \le \phi(0),&(\mathbb{u},\mathbb{v}) \in \partial{M} \smallsetminus h(S^{n-1} \times \Int{D^{m}}) \subset S^{n-1} \times \real^{m},
\\[.5ex]\,
\phi(\Vert{\mathbb{u}}\Vert{-}1) \le \phi(0),&(\mathbb{u},\mathbb{v}) \in D^{n} \times S^{m-1}.
\end{cases}
$\hfil\par\vskip1ex\noindent
By the property \ref{prty:phi2}) of $\phi$, we have $\kappa(\mathbb{x}) = \phi(0) \in (0,\fracinline1/4)$ on $\mathbb{x} \in h(S^{n-1} \times S^{m-1})$. 
By definition, we have $\Img\kappa \subset (-\fracinline1/4,\fracinline1/4) \subset [-\fracinline1/4,\fracinline1/4]$.
Thus $N' = N_{\fracinlines3/4} \cup \{\,(\mathbb{x},t) \in \widehat{N} \smallsetminus N_{\fracinlines3/4} \mid t \le \kappa(\mathbb{x})\,\}$ gives a smoothing of $N_{\fracinlines3/4}$, which is nothing but a manifold with boundary obtained by attaching our smooth handle $\mathbb{D}^{n,m}$ to $M_{0}$ with $\mathbb{S}^{n-1,m}$ pasted into a neighbourhood of $\partial{M_{0}}$ in $M_{0}$.
\end{proof}

\begin{conj}
The converse of Theorem \ref{thm:main} is true, i.e., any regular CW complex $X$ with $X=X^{(n)}$ is a closed manifold.
\end{conj}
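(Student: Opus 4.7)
The plan is to proceed by induction on $n$, upgrading the statement to: every skeleton $X^{(k)}$, $0 \le k \le n$, is a smooth manifold (possibly with boundary, possibly non-compact). The base case is immediate because $X^{(0)} = \coprod_{j \in J_{0}} \mathbb{D}^{0, m_{0}(j)} \cong \coprod_{j \in J_{0}} D^{m_{0}(j)}$ by Theorem \ref{thm:diffeotypeofhandle}, which is a disjoint union of disks. For the inductive step, I would exploit the regularity hypothesis to interpret the attaching map categorically: $h_{n}$ is, by definition, a diffeological induction onto a $D$-open subset $\Img{h_{n}} \subset X^{(n-1)}$. Transporting along the diffeomorphism $\widehat{\Phi}_{n, m_{n}(j)} : \real^{n} \times D^{m_{n}(j)} \to \mathbb{D}^{n, m_{n}(j)}$ of Theorem \ref{thm:diffeotypeofhandle}, the gluing datum becomes a smooth open embedding of $(\real^{n} \smallsetminus \Int{D^{n}}) \times D^{m_{n}(j)}$ into $X^{(n-1)}$, which is exactly the data defining a (generalised) smooth handle attachment. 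Since diffeological pushouts along smooth open embeddings agree with manifold pushouts, $X^{(n)}$ is again a smooth manifold.

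It remains to show that $X = X^{(n)}$ is in fact \emph{closed}, i.e.\ compact and $\partial X = \emptyset$. For compactness, I would argue that in a regular finite-dimensional fat CW complex the index sets $J_{k}$ are forced to be finite: each handle contributes an open cell of a definite dimension $k + m_{k}(j)$, and the paracompact Hausdorff $D$-topology established in Proposition \ref{prop:paracompact}, combined with the fact that the top-dimensional cells must cover the entire space up to a lower-dimensional locus, should limit the number of cells. Then $X$ is a finite union of continuous images of the compact cores of the handles. For the boundary: every point of $\partial X^{(n-1)}$ which lies in $\Img{h_{n}}$ is identified (via the diffeomorphism underlying the attachment) with an interior point of one of the handles $\mathbb{D}^{n, m_{n}(j)}$, and hence is no longer a boundary point of $X^{(n)}$; conversely, the boundary contribution of each handle, which under $\widehat{\Phi}^{-1}_{n, m_{n}(j)}$ is $\real^{n} \times S^{m_{n}(j)-1}$, must be matched with lower-dimensional strata inside the gluing, since there is no further skeleton to absorb them.

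The main obstacle is precisely verifying this boundary-cancellation. Unlike in the proof of Theorem \ref{thm:main}, where the handles come equipped with a global Morse function that orchestrates the gluing, here we are given only the regularity datum and we must reverse-engineer the cancellation. A natural strategy is to define, for each top-dimensional handle, a compact ``core'' $K_{j} \subset \mathbb{D}^{n, m_{n}(j)}$ corresponding to $D^{n} \times D^{m_{n}(j)}$ under $\widehat{\Phi}$, and show that $X$ deformation-retracts onto $\bigcup_{j} \hat{h}_{n}(K_{j}) \cup X^{(n-1)}_{\mathrm{core}}$; then check that this compact subspace is a manifold with empty boundary by local inspection at each stratum. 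The subtle point — and the one that makes this a conjecture rather than a theorem — is that regularity alone may not force the strata to match up coherently; one may need either to impose a ``thickness balance'' condition of the form $k + m_{n}(j) = n$ for top handles, or to prove that any regular CW structure can be refined to one in which such balance holds. I would expect this last combinatorial matching, rather than the smooth-pushout argument of the inductive step, to be the main difficulty.
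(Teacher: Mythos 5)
This statement is stated in the paper as a conjecture, with no proof offered, and your proposal does not close it: it is a plan whose two decisive steps are exactly the ones left open. The most serious gap is the compactness/boundary part. Your argument that regularity plus paracompactness ``should limit the number of cells'' is not an argument; nothing in Definition \ref{defn:smoothCW} or in the regularity condition prevents $J_{k}$ from being infinite (an infinite discrete set is a regular CW complex with $X=X^{(0)}$ and is a non-compact $0$-manifold), so compactness cannot be extracted without an additional hypothesis or a genuinely new idea. Worse, already the base case shows that the literal statement needs interpretation: $X=X^{(0)}=\mathbb{D}^{0,m}=D^{m}$ with $m>0$ is vacuously regular (the attaching map $h_{0}$ has empty domain) and is a compact manifold with non-empty boundary, not a closed manifold. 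So the ``boundary cancellation'' you defer to the end is not merely the hard step; without it, or without pinning down the implicit hypotheses that force it, the statement you are proving is false as read, and your deformation-retract-onto-cores strategy gives no mechanism for it.

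The inductive step also contains an unproved transfer. You invoke ``the diffeomorphism $\widehat{\Phi}_{n,m}$ of Theorem \ref{thm:diffeotypeofhandle}'', but the diffeomorphism of that theorem is $\Psi_{n,m}:\real^{n}\times D^{m}\to\mathbb{D}^{n,m}$, and it does not carry $\mathbb{S}^{n-1}\times D^{m}$ onto $\mathbb{S}^{n-1,m}$; the map that does, $\widehat{\Phi}_{n,m}$ of Theorem \ref{thm:smoothtypeofhandle}, is only a smooth homeomorphism and fails to be a diffeomorphism along $S^{n-1}\times D^{m}$. So the gluing datum is not literally ``a smooth open embedding of $(\real^{n}\smallsetminus\Int{D^{n}})\times D^{m}$'' after transport, and the identification of $(\mathbb{D}^{n,m},\mathbb{S}^{n-1,m})$ with a standard handle pair must be handled with the pair structure intact. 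Moreover, the claim that a diffeological induction of $\mathbb{S}^{n-1,m}$ onto a $D$-open subset of the manifold $X^{(n-1)}$ is automatically a smooth open embedding of manifolds with boundary sending $\partial\mathbb{S}^{n-1,m}$ into $\partial X^{(n-1)}$, and that the resulting diffeological pushout is the manifold pushout (including Hausdorffness of the glued space, which fails for general gluings along open subsets and is only supplied here by Proposition \ref{prop:paracompact}), is plausible via reflexivity (Theorem \ref{thm:main-ref}) but is asserted rather than proved. Each of these is a concrete lemma you would need to state and establish before the induction gets off the ground.
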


\begin{conj}
A manifold with boundary is a regular CW complex.
\end{conj}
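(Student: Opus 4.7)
The plan is to adapt the Morse-theoretic proof of Theorem \ref{thm:main} to the bordered setting. Assume first that $M$ is compact; the general paracompact case should then follow by exhausting $M$ with compact submanifolds-with-corners cut along regular level sets of a proper Morse function, and passing to a diffeological colimit, noting that colimits of regular CW complexes along regular subcomplex inclusions remain regular CW by the defining pushout structure of Definition \ref{defn:smoothCW}.

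For compact $M$, the strategy splits into two parts. \emph{Step 1 (Collar).} Apply Theorem \ref{thm:main} to the closed $(d{-}1)$-manifold $\partial M$ to equip it with a regular CW structure, then promote this to a regular CW structure on a closed collar $\partial M \times [0,1]$ by ``fattening'' each attaching datum of $\partial M$ from $\mathbb{S}^{n-1,m}$ to $\mathbb{S}^{n-1,m+1}$; this requires a diffeomorphism $\mathbb{D}^{n,m} \times [0,1] \homeo \mathbb{D}^{n,m+1}$ analogous to Theorem \ref{thm:diffeotypeofhandle}. \emph{Step 2 (Handles).} Choose a Morse function $f : M \to [0,\infty)$ with $f^{-1}(0) = \partial M$ a regular level set and all critical points in $\Int{M}$. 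Standard relative Morse theory then presents $M$ as this collar with finitely many ordinary handles $D^{n_{i}} \times D^{m_{i}}$ attached in order of increasing Morse index. Apply the corner-smoothing procedure from the proof of Theorem \ref{thm:main}, together with the diffeomorphism $\Psi_{n,m}$ of Theorem \ref{thm:diffeotypeofhandle}, to replace each ordinary handle attachment by a fat handle $\mathbb{D}^{n_{i},m_{i}}$ attachment, yielding the desired regular CW structure on $M$.

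The main obstacle is Step 1: producing a regular CW structure on $\partial M \times [0,1]$ from one on $\partial M$. The product of a fat handle $\mathbb{D}^{n,m}$ with an interval is not itself a fat handle, since multiplying by $[0,1]$ reintroduces corners along $\mathbb{D}^{n,m} \times \{0,1\}$ that the $\phi$-based smoothing of $\mathbb{D}^{n,m}$ was designed to eliminate. One must construct an analogue of Theorem \ref{thm:diffeotypeofhandle} at the level of products, compatibly across cells of $\partial M$, so that both the thickened attaching maps and the new attaching regions (which now include portions of $\partial M \times \{1\}$) assemble to a valid fat CW structure on the collar. A secondary subtlety is ensuring that the regularity condition (each $h_{n}$ is an induction onto a $D$-open subset) survives the thickening; this will likely need an explicit verification using the smooth homeomorphism $\widehat{\Phi}_{n,m}$ of Theorem \ref{thm:smoothtypeofhandle}, together with compatibility across the corner between the collar direction and the intrinsic handle directions inherited from $\partial M$.
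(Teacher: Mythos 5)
First, a point of order: the paper does not prove this statement. It appears verbatim as a conjecture at the end of Section~8 (and the introduction only says the authors ``expect'' regular CW complexes to include all compact manifolds), so there is no proof of record to compare yours against; your proposal has to stand on its own. As written it does not: it is a plan whose central step you yourself flag as an unresolved ``main obstacle,'' and that obstacle is real. The fattening in Step~1 replaces the attaching region $\mathbb{S}^{n-1,m}=\mathbb{S}^{n-1}\times\real^{m}$ by $\mathbb{S}^{n-1,m+1}=\mathbb{S}^{n-1}\times\real^{m+1}$, but the datum you actually have after thickening is a map defined on $\mathbb{S}^{n-1}\times\real^{m}\times[0,1]$, which is not diffeomorphic to $\mathbb{S}^{n-1,m+1}$ (one has boundary, the other does not); and regularity forces $h_{n}$ to be an induction onto a $D$-open subset of the skeleton, so the flap of each fattened cell needs an open ($d$-dimensional, boundaryless) neighbourhood to land in --- which a standalone collar $\partial M\times[0,1]$ cannot provide near $\partial M\times\{0\}$. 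Moreover, even granting Steps~1 and~2 separately, they do not concatenate into a fat CW structure in the sense of Definition~\ref{defn:smoothCW}: the index-$n$ handles of Step~2 attach along a neighbourhood of $\partial M\times\{1\}$, i.e.\ to the \emph{whole} collar, not to its $(n-1)$-skeleton, so the required skeletal filtration of $M$ is violated. The same skeletal-order problem defeats the one-line exhaustion argument for the non-compact case: cells of $M_{i+1}\smallsetminus M_{i}$ of low index attach along $\partial M_{i}$, which meets top-dimensional cells of $M_{i}$, so the union of the CW structures on the $M_{i}$ is not a colimit of skeleta.

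A more economical route for compact $M$ avoids Step~1 entirely: choose a Morse function $f:M\to[0,c]$ with no critical points on $\partial M$ and with $\partial M=f^{-1}(c)$ a regular top level. Then $M\cong f^{-1}[0,t]$ for $t<c$ close to $c$, which is a finite union of ordinary handles attached in order of increasing index with no leftover collar, and one can attempt to apply the single-handle smoothing argument of the proof of Theorem~\ref{thm:main} (together with Theorem~\ref{thm:diffeotypeofhandle}) verbatim at each stage; closedness of $M$ is used there only to invoke the existence of the index-ordered handle decomposition, not in the replacement of $D^{n}\times D^{m}$ by $\mathbb{D}^{n,m}$. What remains --- and what presumably kept the authors from claiming the result --- is the verification that this smoothing and the regularity of every $h_{n}$ survive when the intermediate stages retain boundary components disjoint from the attaching regions, and the treatment of non-compact $M$. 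Until those are supplied, the statement should be regarded as open, and your proposal as a (partially misdirected) outline rather than a proof.
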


\appendix

\section{Proof of Theorem \ref{thm:diffeotypeofhandle}}\label{appendix:diffeotypeofhandle}

Let $\alpha, \,\beta : [0,\infty) \times [-1,1] \to \real$ be smooth functions defined as follows:
\par\vskip1ex\noindent\hfil$\displaystyle
\alpha(u,v) = \fracinline3/2-p_{1}(u){\cdot}(1-\cos(\fracinline{\pi}/2{\cdot}v)),\quad%
\beta(u,v) = 1+\phi(\fracinline3/2{\cdot}u-1){\cdot}s(v),
$\hfil\par\vskip1ex\noindent
where smooth functions $p_{a}(x)$, $a>\fracinline1/2$, and an analytic function $s(x)$ are defined by
\par\vskip1ex\noindent\hfil$\displaystyle
p_{a}(x)=\begin{cases}\,\frac{\phi(\fracinlines3/2{\cdot}x-a)}{x}>0,&x>0\\[1.5ex]\,0,&x<\frac{(2a-1)}3\end{cases},\quad
s(x)=\begin{cases}\,\frac{\sin(\fracinlines{\pi}/2{\cdot}x)}{x}\ge0,&x\not=0\\[1.5ex]\,\frac{\pi}2>0,&x=0\end{cases}.
$\hfil\par\vskip1ex\noindent
By definition, we have $\beta(u,v) \ge 1$.
By Proposition \ref{prop:relation-phi}, we have $\phi(\fracinline3/2{\cdot}u-a) - \phi(a-\fracinline3/2{\cdot}u)=\fracinline3/2{\cdot}u-a$.
If $\fracinline3/2{\cdot}u \ge a+\fracinline1/2$, then $\phi(a-\fracinline3/2{\cdot}u)=0$ and $\phi(\fracinline3/2{\cdot}u-a)=\fracinline3/2{\cdot}u-a$, and hence $p_{a}(\fracinline3/2{\cdot}u)<\fracinline3/2$.
If $a \le \fracinline3/2{\cdot}u \le a+\fracinline1/2$, then $0 \le a-\fracinline3/2{\cdot}u+\fracinline1/2 \le \fracinline1/2$ and $0 \le \phi(a-\fracinline3/2{\cdot}u) \le \frac{(a-\fracinlines3/2{\cdot}u+\fracinlines1/2)^{2}}2 \le \frac{(a-\fracinlines3/2{\cdot}u+\fracinlines1/2)}4$.
Thus $\phi(\fracinline3/2{\cdot}u-a) \le \fracinline3/2{\cdot}u-a+\frac{(a-\fracinlines3/2{\cdot}u+\fracinlines1/2)}4< \fracinline9/8{\cdot}u$ and $p_{a}(\fracinline3/2{\cdot}u)<\fracinline9/8$.
If $a-\fracinline1/2 < \fracinline3/2{\cdot}u < a$, then $0 < \fracinline3/2{\cdot}u-a+\fracinline1/2 < \fracinline1/2$ and $\phi(\fracinline3/2{\cdot}u-a) \le \frac{(\fracinlines3/2{\cdot}u-a+\fracinlines1/2)^{2}}2<\frac{\fracinlines3/2{\cdot}u-a+\fracinlines1/2}4<\fracinline3/8{\cdot}u$ and $p_{a}(u)<\fracinline3/8$.
If $\fracinline3/2{\cdot}u\le a-\fracinline1/2$, then $p_{a}(\fracinline3/2{\cdot}u)=0$.
Thus always $p_{a}(\fracinline3/2{\cdot}u)<\fracinline3/2$ and $\alpha(u,v)>0$.

Using them, we define a smooth function $\Phi_{n,m} : \real^{n} \times D^{m} \to \real^{n} \times \real^{m}$ as follows:
\begin{align*}&\textstyle
\Phi_{n,m}(\mathbb{u},\mathbb{v}) = ({\alpha(\Vert{\mathbb{u}}\Vert,\Vert{\mathbb{v}}\Vert)}{\cdot}\mathbb{u},{\beta(\Vert{\mathbb{u}}\Vert,\Vert{\mathbb{v}}\Vert)}{\cdot}\mathbb{v}).
\end{align*}

Let $(x,y)=\Phi_{1,1}(u,v)$, $(u,v) \in \real \times [-1,1]$.
Then we have
\begin{align*}&
x = \fracinline3/2{\cdot}u-p_{1}(\fracinline3/2{\cdot}\vert{u}\vert){\cdot}(1{-}\cos(\fracinline{\pi}/2{\cdot}v)){\cdot}u,
\\&
y = v+\phi(\fracinline3/2{\cdot}\vert{u}\vert{-}1){\cdot}{\sin(\fracinline{\pi}/2{\cdot}v)}.
\end{align*}
\begin{lem}\label{lem:fathandle}
$\Phi_{1,1} : \real \times [-1,1] \to \real \times \real$ is a diffeomorphism onto $\mathbb{D}^{1,1} \subset \real \times \real$.
\end{lem}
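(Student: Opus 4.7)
The plan is to treat smoothness, the image inclusion, local diffeomorphism, and global bijectivity separately, after reducing by symmetry to the first quadrant.

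First, the parity identities $\Phi_{1,1}(\pm u,\pm v)=(\pm x,\pm y)$ (each sign independent) follow because $\alpha,\beta$ depend only on $(|u|,|v|)$, $1-\cos(\tfrac{\pi v}{2})$ is even, and $v\cdot s(|v|)=\sin\tfrac{\pi v}{2}$ is odd; this reduces the claim to the first-quadrant piece $u\ge0,v\in[0,1]$. Smoothness at $u=0$ is the only delicate point, but $\phi(\tfrac32|u|-1)$ is identically zero for $|u|\le\tfrac13$ and flat at $u=0$, so $p_{1}(|u|)u=\phi(\tfrac32|u|-1)\operatorname{sign}(u)$ is a smooth function of $u$; smoothness at $v=0$ is similarly automatic from $v\cdot s(|v|)=\sin\tfrac{\pi v}{2}$.

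Next I will show the image lies in $\mathbb{D}^{1,1}$ and that the edge $|v|=1$ is carried onto $\partial\mathbb{D}^{1,1}$. On the first quadrant the map reads
\[
x=\tfrac32 u-\phi(\tfrac32 u-1)(1-\cos\tfrac{\pi v}{2}),\qquad y=v+\phi(\tfrac32 u-1)\sin\tfrac{\pi v}{2}.
\]
Setting $v=1$ and $t=\tfrac32 u-1$ collapses these to $x=t+1-\phi(t)$ and $y=1+\phi(t)$; using $\phi(t)-\phi(-t)=t$ from Proposition \ref{prop:relation-phi} yields $x=1-\phi(-t)$, and since $2-x-y=-t$ this is precisely $x=1-\phi(2-x-y)$, placing $(x,y)$ on $\partial\mathbb{D}^{1,1}$. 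For $v<1$ the same comparison is a strict inequality, putting those points in the interior of $\mathbb{D}^{1,1}$; non-negativity of $x$ is verified by a case split over $u\in[0,\tfrac13]\cup[\tfrac13,1]\cup[1,\infty)$ using $\phi(\tfrac12)=\tfrac12$. The edge $u=0$ is mapped identically to $\{0\}\times[-1,1]\subset\mathbb{D}^{1,1}$, meeting $\partial\mathbb{D}^{1,1}$ exactly at $(0,\pm1)$.

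The Jacobian computation is the technical core. With the shorthand $c=\cos\tfrac{\pi v}{2}$, $s=\sin\tfrac{\pi v}{2}$, $\lambda=\lambda(\tfrac32 u-1)$, $\phi=\phi(\tfrac32 u-1)$, differentiating and collecting cross-terms via $c^{2}+s^{2}=1$ produces
\[
\tfrac23\,J\;=\;1-\lambda(1-c)+\tfrac{\pi}{2}\,\phi\bigl[c+\lambda(1-c)\bigr].
\]
Both summands are nonnegative since $\lambda,c,1-c\in[0,1]$ and $\phi\ge0$. They cannot vanish simultaneously: if the first is zero then $\lambda=1$ and $c=0$, which forces $\tfrac32 u-1\ge1$, hence $\phi=\tfrac32 u-1\ge1>0$, and the bracket evaluates to $c+\lambda(1-c)=1$, so the second summand is strictly positive. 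Therefore $J>0$ on all of $\real\times[-1,1]$, and $\Phi_{1,1}$ is a local diffeomorphism.

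Finally I conclude globally. Properness follows from the identity $(x-1)^{2}+(y-v)^{2}=(\tfrac32 u-1)^{2}$ valid on $u\ge1$ together with $|v|\le1$, which bounds $|u|$ in terms of $|x|+|y|$; on $|u|\le 1$ this is automatic. A proper local diffeomorphism between connected manifolds (with boundary) of the same dimension is a covering map, and since $\mathbb{D}^{1,1}$ is simply connected (it deformation-retracts onto $\{0\}\times[-1,1]$) and the boundary-matching of the second step exhibits a single-sheet cover, $\Phi_{1,1}$ is a global diffeomorphism onto $\mathbb{D}^{1,1}$. The principal obstacle will be the Jacobian rearrangement — specifically the regrouping via $c^{2}+s^{2}=1$ into two manifestly nonnegative pieces and the exclusion of their simultaneous vanishing — with the image-inclusion verification being tedious but routine once Proposition \ref{prop:relation-phi} is in hand.
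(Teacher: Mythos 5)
Your proposal is correct, and its computational core coincides with the paper's proof: the image analysis rests on the same specializations (the edge $|v|=1$ lands on $\partial\mathbb{D}^{1,1}$ via Proposition \ref{prop:relation-phi}, the strip $|u|\le\tfrac13$ where the map is linear), and your Jacobian identity $\tfrac23 J=1-\lambda(1-c)+\tfrac{\pi}{2}\,\phi\,[c+\lambda(1-c)]$ is exactly the paper's expression; the paper rules out $J=0$ by the same degeneration argument that you phrase as positivity of a sum of two nonnegative terms which cannot vanish simultaneously. Where you genuinely diverge is the finish: the paper passes from a nonvanishing Jacobian and the image computation directly to ``has a smooth inverse,'' leaving global injectivity implicit, whereas you add a properness estimate (the identity $(x-1)^2+(y-v)^2=(\tfrac32 u-1)^2$ for $u\ge1$ is correct) and conclude via the proper-local-diffeomorphism-is-a-covering argument. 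This buys an honest globalization of a step the paper only asserts, at the cost of having to run the covering-map lemma in the manifold-with-boundary setting --- which your boundary-to-boundary, interior-to-interior matching does make legitimate. A slightly cheaper way to count sheets, avoiding simple connectivity altogether, is to note that the inequality $|x|+|y|\ge\tfrac32|u|+|v|$ forces the fibre over $(0,0)$ to be the single point $(0,0)$.

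Two small corrections. First, since $\phi'(t)=\lambda(t+\tfrac12)$, the $\lambda$ occurring in the Jacobian is $\lambda(\tfrac32 u-\tfrac12)$, not $\lambda(\tfrac32 u-1)$; your contradiction still closes, because $\lambda(\tfrac32 u-\tfrac12)=1$ forces $u\ge1$ and hence $\phi(\tfrac32 u-1)=\tfrac32 u-1\ge\tfrac12>0$, but the constant should be fixed. Second, $\mathbb{D}^{1,1}$ does \emph{not} deformation retract onto $\{0\}\times[-1,1]$ by any evident horizontal contraction: $(1,10)\in\mathbb{D}^{1,1}$ while $(\tfrac12,10)\notin\mathbb{D}^{1,1}$, so horizontal segments leave the set immediately. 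Simple connectivity is nevertheless true and can be justified directly --- the complement of $\mathbb{D}^{1,1}$ in $\real^2$ is the union of two disjoint open unbounded regions, one in each half-plane $|y|>1$ --- or sidestepped by the fibre count above.
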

\begin{proof}
If $v=0$, then we have $(x,y)=({u},{0})$.
If $\vert{v}\vert=1$, then we have $({x},{y})$ $=$ $((\fracinline3/2-p_{1}(\fracinline3/2{\cdot}\vert{u}\vert)){\cdot}{u},(1+\phi(\fracinline3/2{\cdot}\vert{u}\vert{-}1)){\cdot}v)$, and we have $\vert{x}\vert$ $=$ $\fracinline3/2{\cdot}\vert{u}\vert-\phi(\fracinline3/2{\cdot}\vert{u}\vert{-}1)$, $\vert{y}\vert$ $=$ $1+\phi(\fracinline3/2{\cdot}\vert{u}\vert{-}1)$ 
and $\vert{x}\vert+\vert{y}\vert = \fracinline3/2{\cdot}\vert{u}\vert+1$.
Hence we have $\vert{x}\vert$ $=$ $1-\phi(1{-}\fracinline3/2{\cdot}\vert{u}\vert)$ $=$ $1-\phi(2{-}\vert{x}\vert{-}\vert{y}\vert)$ which implies $({x},{y}) \in \partial\mathbb{D}^{1,1}$.

If $\vert{u}\vert \le \fracinline1/3$, then $({x},{y})=(\fracinline3/2{\cdot}{u},{v})$, while we obtain, in general, 
\begin{align*}
\vert{x}\vert+\vert{y}\vert &= \fracinline3/2{\cdot}\vert{u}\vert-\phi(\fracinline3/2{\cdot}\vert{u}\vert{-}1)(1-\cos(\fracinline{\pi}/2{\cdot}\vert{v}\vert)) + \vert{v}\vert + \phi(\fracinline3/2{\cdot}\vert{u}\vert{-}1)\sin(\fracinline{\pi}/2{\cdot}\vert{v}\vert) 
\\&
= \fracinline3/2{\cdot}\vert{u}\vert + \vert{v}\vert + \phi(\fracinline3/2{\cdot}\vert{u}\vert{-}1){\cdot}(\cos(\fracinline{\pi}/2{\cdot}\vert{v}\vert)+\sin(\fracinline{\pi}/2{\cdot}\vert{v}\vert)-1) 
\ge \fracinline3/2{\cdot}\vert{u}\vert + \vert{v}\vert, 
\end{align*}
since $\cos\theta+\sin\theta=\sqrt{2}\sin(\theta+\fracinline{\pi}/4) \ge 1$ if $0 \le \theta \le \fracinline{\pi}/2$.
Thus the image of $\Phi_{1,1}$ is $\mathbb{D}^{1,1}$.
\par
Now, let us calculate the Jacobian of $\Phi_{1,1}$.
In the case when $u > 0$, we have
\begin{align*}&
x = \fracinline3/2{\cdot}u-{\phi(\fracinline3/2{\cdot}{u}{-}1)}{\cdot}(1-\cos(\fracinline{\pi}/2{\cdot}v)),
\\&
y = v+\phi(\fracinline3/2{\cdot}u{-}1){\cdot}{\sin(\fracinline{\pi}/2{\cdot}v)},
\end{align*}
and then, it follows that $\pder{(x,y)}by{(u,v)}$ $=$ $\fracinline3/2{\cdot}(1-(1-\cos(\fracinline{\pi}/2{\cdot}v)){\cdot}\lambda(\fracinline3/2{\cdot}u-\fracinline1/2))$ $+$ 
$\frac{3\pi}4{\cdot}(\cos(\fracinline{\pi}/2{\cdot}v)+(1-\cos(\fracinline{\pi}/2{\cdot}v)){\cdot}\lambda(\fracinline3/2{\cdot}u{-}\fracinline1/2)){\cdot}\phi(\fracinline3/2{\cdot}u-1)$.
Hence, assuming $\pder{(x,y)}by{(u,v)}$ $=$ $0$, we obtain the following since $0 \le \cos(\fracinline{\pi}/2{\cdot}v), \,(1-\cos(\fracinline{\pi}/2{\cdot}v)){\cdot}\lambda(\fracinline3/2{\cdot}u{-}\fracinline1/2)$ $\le$ $1$ and $\phi(\fracinline3/2{\cdot}u{-}1)$ $\ge$ $0$: 
\par\vskip1ex\noindent\hfil$\displaystyle
(1-\cos(\fracinline{\pi}/2{\cdot}v))\cdot\lambda(\fracinline3/2{\cdot}u{-}\fracinline1/2)=1.
$\hfil\par\vskip1ex\noindent
It implies $\lambda(\fracinline3/2{\cdot}u{-}\fracinline1/2)=1$ and $\cos(\fracinline{\pi}/2{\cdot}v)=0$, and hence $\pder{(x,y)}by{(u,v)}$ $=$ $\frac{3\pi}4{\cdot}\phi(\fracinline3/2{\cdot}u-1)$.
Thus $\pder{(x,y)}by{(u,v)}$ $=$ $0$ implies $\phi(\fracinline3/2{\cdot}u-1)=0$.
The condition on $\lambda$ implies $u \ge 1$, while the condition on $\phi$ implies $u \le \fracinline1/3$, 
which is a contradiction, and we obtain $\pder{(x,y)}by{(u,v)} \not= 0$.
\par
In the case when $u < 0$, assuming $\pder{(x,y)}by{(u,v)} = 0$, we are led to a contradiction as well, by using an arguments parallel to the case when $u>0$.
\par
In the case when $-\fracinline1/3 < u < \fracinline1/3$, we have
$(x,y) = (\fracinline3/2{\cdot}u,v)$ and $\pder{(x,y)}by{(u,v)} = \fracinline3/2 \not= 0$.

Thus $\Phi_{1,1}$ has a smooth inverse function, and we have done.
\end{proof}

Now, we are ready to show Theorem \ref{thm:diffeotypeofhandle}.

Let $\Theta_{1,1} : \mathbb{D}^{1,1} \to \real \times D^{1}$ be the smooth inverse function of $\Phi_{1,1}$.
Then we obtain a smooth function $\Theta_{n,m}$ for $(\mathbb{x},\mathbb{y}) \in \mathbb{D}^{n,m}$ by the following formula:
\par\vskip1ex\noindent\hfil$\displaystyle
\Theta_{n,m}(\mathbb{x},\mathbb{y}) = (\frac1{\alpha(u,v)}{\cdot}\mathbb{x},\frac1{\beta(u,v)}{\cdot}\mathbb{y}),\quad (u,v)=\Theta_{1,1}(\Vert{\mathbb{x}}\Vert,\Vert{\mathbb{y}}\Vert).
$\hfil\par\vskip1ex\noindent
Then we have $\Psi_{1,1}(u,v)=(\Vert{\mathbb{x}}\Vert,\Vert{\mathbb{y}}\Vert)$, and hence $(\Vert{\mathbb{x}}\Vert,\Vert{\mathbb{y}}\Vert) = (\alpha(u,v){\cdot}u,\beta(u,v){\cdot}v)$.
Let $(\mathbb{u},\mathbb{v}) = \Theta_{n,m}(\mathbb{x},\mathbb{y})$.
Then we obtain 
$(\Vert{\mathbb{u}}\Vert,\Vert{\mathbb{v}}\Vert) = (\frac1{\alpha(u,v)}{\cdot}\Vert{\mathbb{x}}\Vert,\frac1{\beta(u,v)}{\cdot}\Vert{\mathbb{y}}\Vert) = (u,v)$ and 
$\Psi_{n,m}(\mathbb{u},\mathbb{v}) = (\alpha(\Vert{\mathbb{u}}\Vert,\Vert{\mathbb{v}}\Vert){\cdot}\mathbb{u},\beta(\Vert{\mathbb{u}}\Vert,\Vert{\mathbb{v}}\Vert){\cdot}\mathbb{v}) = (\alpha(u,v){\cdot}\mathbb{u},\beta(u,v){\cdot}\mathbb{v}) = (\mathbb{x},\mathbb{y})$.
Thus $\Theta_{n,m}$ is the inverse function of $\Phi_{n,m}$.
It completes the proof of the theorem.

\section{Proof of Theorem \ref{thm:smoothtypeofhandle}}\label{appendix:smoothtypeofhandle}

If we replace smooth function $\alpha$ in the previous section with the following $\widehat\alpha$, we must obtain a smooth bijection $\widehat\Phi_{n,m} : (\real^{n} \times D^{m},\mathbb{S}^{n-1} \times D^{m}) \to (\mathbb{D}^{n,m},\mathbb{S}^{n-1,m})$:
\begin{align*}&
\widehat\alpha(u,v)=\fracinline3/2-p_{1}(u)+p_{2}(u){\cdot}\cos(\fracinline{\pi}/2{\cdot}v),
\\&\textstyle
\widehat\Phi_{n,m}(\mathbb{u},\mathbb{v}) = ({\widehat\alpha(\Vert{\mathbb{u}}\Vert,\Vert{\mathbb{v}}\Vert)}{\cdot}\mathbb{u},{\beta(\Vert{\mathbb{u}}\Vert,\Vert{\mathbb{v}}\Vert)}{\cdot}\mathbb{v}).
\end{align*}
Let $(\mathbb{x},\mathbb{y}) = \widehat\Phi_{n,m}(\mathbb{u},\mathbb{v})$.
If $\Vert{\mathbb{u}}\Vert > 1$, then $\widehat\alpha(\Vert{\mathbb{u}}\Vert,\Vert{\mathbb{v}}\Vert){\cdot}\Vert{\mathbb{u}}\Vert = \fracinline3/2{\cdot}\Vert{\mathbb{u}}\Vert-\phi(\fracinline3/2{\cdot}\Vert{\mathbb{u}}\Vert{-}1) + \phi(\fracinline3/2{\cdot}\Vert{\mathbb{u}}\Vert{-}2)\cos(\fracinline{\pi}/2{\cdot}v) = 1 + \phi(\fracinline3/2{\cdot}\Vert{\mathbb{u}}\Vert{-}2)\cos(\fracinline{\pi}/2{\cdot}v) > 1$, and hence $\widehat\Phi_{n,m}(\mathbb{u},\mathbb{v}) \in \mathbb{S}^{n-1,m}$.
If $\Vert{\mathbb{u}}\Vert < 1$, then $\fracinline3/2{\cdot}\Vert{\mathbb{u}}\Vert-2 < -\fracinline1/2$, and hence the properties of $\phi$ imply
\begin{align*}
\widehat\alpha(\Vert{\mathbb{u}}\Vert,\Vert{\mathbb{v}}\Vert){\cdot}\Vert{\mathbb{u}}\Vert &= \fracinline3/2{\cdot}\Vert{\mathbb{u}}\Vert-\phi(\fracinline3/2{\cdot}\Vert{\mathbb{u}}\Vert{-}1) + \phi(\fracinline3/2{\cdot}\Vert{\mathbb{u}}\Vert{-}2)\cos(\fracinline{\pi}/2{\cdot}v) 
\\&
= 1 - \phi(1{-}\fracinline3/2{\cdot}\Vert{\mathbb{u}}\Vert) < 1,
\end{align*}
and hence $\widehat\Phi_{n,m}(\mathbb{u},\mathbb{v}) \not\in \mathbb{S}^{n-1,m}$.
If $\Vert{\mathbb{u}}\Vert = 1$, then $\widehat\alpha(\Vert{\mathbb{u}}\Vert,\Vert{\mathbb{v}}\Vert){\cdot}\Vert{\mathbb{u}}\Vert = 1$ and $\beta(\Vert{\mathbb{u}}\Vert,\Vert{\mathbb{v}}\Vert){\cdot}\Vert{\mathbb{v}}\Vert$ $=$ $\Vert{\mathbb{v}}\Vert+\phi(\fracinline1/2){\cdot}\sin(\fracinline{\pi}/2{\cdot}\Vert{\mathbb{v}}\Vert)$ $=$ $\Vert{\mathbb{v}}\Vert+\fracinline1/2{\cdot}\sin(\fracinline{\pi}/2{\cdot}\Vert{\mathbb{v}}\Vert)$ which ranges over $[0,\fracinline3/2]$, and hence we obtain $\widehat\Phi_{n,m}(S^{n-1} \times D^{m})=\partial_{0}\mathbb{S}^{n-1,m}$.

By arguments similar to that in Lemma \ref{lem:fathandle}, we have that $\widehat\Phi_{n,m}$ is diffeomorphic on $(\real^{n} \smallsetminus S^{n-1}) \times D^{m}$.
Now, let $(n,m)=(1,1)$.
Then, for $(x,y)=\widehat\Phi_{1,1}(u,v)$, we have
\begin{align*}&
x = \fracinline3/2{\cdot}u-\phi(\fracinline3/2{\cdot}|u|{-}1)+\phi(\fracinline3/2{\cdot}|u|{-}2){\cdot}\cos(\fracinline{\pi}/2{\cdot}v),
\\&
y = v+\phi(\fracinline3/2{\cdot}|u|{-}1){\cdot}{\sin(\fracinline{\pi}/2{\cdot}v)}.
\end{align*}
It then follows, by putting $u=1$, that 
\begin{align*}&
\pder{x}by{u}(1,v) = \frac32 - \frac32{\cdot}\lambda(1) +\frac32{\cdot}\lambda(0){\cdot}\cos(\fracinline{\pi}/2{\cdot}v)=0,
\\[1ex]&
\pder{x}by{v}(1,v) = -\frac{\pi}2{\cdot}\phi(-\fracinline1/2){\cdot}\sin(\fracinline{\pi}/2{\cdot}v)=0,
\end{align*}
which implies $\pder{(x,y)}by{(u,v)}(1,v)=0$, and $\widehat\Phi_{1,1}$ is not a diffeomorphism.
Further, we obtain that the Jacobian of $\widehat\Phi_{n,m}$ is zero on $S^{n} \times D^{m}$ for all $n, \,m \ge 1$.
Since the compact subspace $S^{n} \times D^{m} \subset \real^{n} \times D^{m}$ has a compact neighbourhood and $\mathbb{D}^{n,m}$ is a subspace of the euclidean space which is Hausdorff, $\widehat{\Phi}_{n,m}$ has continuous inverse near $\widehat{\Phi}_{n,m}(S^{n} \times D^{m})$, and hence $\widehat{\Phi}_{n,m}$ is a homeomorphism.
Details are left to the reader.

\section*{Acknowledgements}

This research is partly based on second author's master thesis \cite{Kojima:2023}, and is partially supported by Grant-in-Aids for Challenging Research (Exploratory) JP18K18713 and Scientific Research (C) JP23K03093 both from JSPS (Norio \textsc{Iwase}).

We are happy if the proof of our main theorem Theorem \ref{thm:main} could give an explicit illustration for the nature of a smooth handle decomposition of a closed manifold.

%
%


\begin{thebibliography}{SYH18}

\bibitem[BH11]{MR2817410}
John~C. Baez and Alexander~E. Hoffnung.
\newblock Convenient categories of smooth spaces.
\newblock {\em Trans. Amer. Math. Soc.}, 363(11):5789--5825, 2011.

\bibitem[HS13]{AX13115668}
Tadayuki Haraguchi and Kazuhisa Shimakawa.
\newblock A model structure on the category of diffeological spaces, 2013.
\newblock arXiv preprint, https://arxiv.org/abs/1311.5668.

\bibitem[Iwa22]{MR4414309}
Norio Iwase.
\newblock Whitney approximation for smooth {CW} complex.
\newblock {\em Kyushu J. Math.}, 76(1):177--186, 2022.

\bibitem[Iwa24]{MR4712607}
Norio Iwase.
\newblock Smooth {$A_{\infty}$}-form on a diffeological loop space.
\newblock In {\em Recent advances in diffeologies and their applications},
  volume 794 of {\em Contemp. Math.}, pages 223--238. Amer. Math. Soc.,
  [Providence], RI, [2024] \copyright 2024.

\bibitem[IZ13]{MR3025051}
Patrick Iglesias-Zemmour.
\newblock {\em Diffeology}, volume 185 of {\em Mathematical Surveys and
  Monographs}.
\newblock American Mathematical Society, Providence, RI, 2013.

\bibitem[Kih19]{MR3913971}
Hiroshi Kihara.
\newblock Model category of diffeological spaces.
\newblock {\em J. Homotopy Relat. Struct.}, 14(1):51--90, 2019.

\bibitem[Koj23]{Kojima:2023}
Yuki Kojima.
\newblock Manifolds and fat smooth cw complexes.
\newblock Master's thesis, Kyushu University, 2023.

\bibitem[nla]{nlab-site}
Grothendieck site.
\newblock https://ncatlab.org/nlab/show/site.

\bibitem[Sou84]{MR753860}
J.-M. Souriau.
\newblock Groupes diff\'{e}rentiels de physique math\'{e}matique.
\newblock In {\em South {R}hone seminar on geometry, {II} ({L}yon, 1983)},
  Travaux en Cours, pages 73--119. Hermann, Paris, 1984.

\bibitem[SYH18]{MR3884529}
Kazuhisa Shimakawa, Kohei Yoshida, and Tadayuki Haraguchi.
\newblock Homology and cohomology via enriched bifunctors.
\newblock {\em Kyushu J. Math.}, 72(2):239--252, 2018.

\bibitem[Tam83]{MR841270}
Ichiro Tamura.
\newblock {\em Bibun is\={o} kikagaku. {I}--{III}}, volume~20 of {\em Iwanami
  Shoten Kiso S\={u}gaku [Iwanami Lectures on Fundamental Mathematics]}.
\newblock Iwanami Shoten, Tokyo, second edition, 1983.
\newblock Kikagaku [Geometry], iii, (in Japanese).

\bibitem[Wat12]{WattsThesis}
Jordan Watts.
\newblock {\em Diffeologies, Differential Spaces, and Symplectic Geometry}.
\newblock PhD thesis, University of Toronto, 2012.

\bibitem[wol]{wolfram2}
https://www.wolframalpha.com/input?i=integrate\%28exp\%284\%2F3-1\%2F\%283x\%29-1\%2F\%283-3x\%29\%29\%2C\%5Bx\%2C0\%2C1\%5D\%29.

\end{thebibliography}

\bigskip

\end{document}